\newtheorem{theorem}{Theorem}
\theoremstyle{plain}
\newtheorem{corollary}{Corollary}
\newtheorem{definition}{Definition}
\newtheorem{lemma}{Lemma}
\newtheorem{proposition}{Proposition}
\newtheorem{remark}{Remark}
\numberwithin{equation}{section}
\begin{document}

\begin{center}
\pagestyle{myheadings}\thispagestyle{empty}%
\markboth{\bf  N. Saba and A. Boussayoud }
{\bf SOME NEW THEOREMS ON GENERATING FUNCTIONS AND THEIR APPLICATIONS ON ODD AND...}

\textbf{SOME NEW\ THEOREMS ON GENERATING FUNCTIONS AND THEIR APPLICATIONS ON
ODD AND EVEN\emph{\ }CERTAIN NUMBERS ATTACHED TO }$p$\textbf{\ AND }$q$%
\textbf{\ PARAMETERS}

\textbf{Nabiha Saba}

LMAM Laboratory and Department of Mathematics,

Mohamed Seddik Ben Yahia University, Jijel, Algeria

\textbf{E-Mail: sabarnhf1994@gmail.com}

\textbf{Ali Boussayoud}$^{\ast }$

LMAM Laboratory and Department of Mathematics,

Mohamed Seddik Ben Yahia University, Jijel, Algeria

\textbf{E-Mail: aboussayoud@yahoo.fr}

$^{\ast }$\textbf{Corresponding author}\ 

\textbf{\large Abstract}
\end{center}

\begin{quotation}
\qquad In this study, we first provide some\ new theorems by using the
symmetrizing operator $\delta _{e_{1}e_{2}}^{k}$\ for $k\in \left\{
0,1,2,3,4\right\} $. After that, by using this theorems we introduce a new
family of generating functions of odd and even terms of $\left( p,q\right) $%
-Fibonacci numbers, $\left( p,q\right) $-Lucas numbers, $\left( p,q\right) $%
-Pell numbers,\ $\left( p,q\right) $-Pell Lucas numbers, $\left( p,q\right) $%
-Jacobsthal numbers and $\left( p,q\right) $-Jacobsthal Lucas numbers. Then,
we give the new generating functions of the products of these $\left(
p,q\right) $-numbers with odd and even phrases\emph{\ }of $\left( p,q\right) 
$-numbers.
\end{quotation}

\noindent \textbf{2010 Mathematics Subject Classification. }Primary 05E05;
Secondary 11B39.

\noindent \textbf{Key Words and Phrases.} Symmetric functions; Generating
functions; Odd\textbf{\ }$\left( p,q\right) $-numbers; Even $\left(
p,q\right) $-numbers.

\section{\textbf{Introduction and preliminary results}}

The generating function can be applied to solve many problems in
mathematics. This concept makes it possible to know the terms of a recurring
and linear sequence with constant coefficients. This procedure demonstrates
how one can find the terms of a recurrent and linear sequence without
calculating the previous terms. In this work, we investigate the generating
functions of odd and even $\left( p,q\right) $-numbers and some products of
them. In this section, we first present the $(p,q)$-numbers and some
generalization of them.

For over several years, there are many recursive sequences that have been
studied in the literatures. the famous examples of these sequences are $%
\left( p,q\right) $-Fibonacci and $\left( p,q\right) $-Lucas numbers, $%
\left( p,q\right) $-Jacobsthal\ and $\left( p,q\right) $-Jacobsthal Lucas
numbers, $\left( p,q\right) $-Pell and $\left( p,q\right) $-Pell Lucas
numbers (see \cite{SUVAR, SU, S, HASAN, UYG, NSABA, NABIHA}), because they
are extensively used in various research areas. The authors in \cite{NS}
defined the generalized $\left( p,q\right) $-Fibonacci sequence $\left\{
f_{p,q,n}\left( \alpha ,\beta ,\gamma \right) \right\} _{n\geq 0}$,
generalized $\left( p,q\right) $-Pell sequence $\left\{ l_{p,q,n}\left(
\alpha ,\beta ,\gamma \right) \right\} _{n\geq 0}$\ and generalized $\left(
p,q\right) $-Jacobsthal sequence $\left\{ C_{p,q,n}\left( \alpha ,\beta
,\gamma \right) \right\} _{n\geq 0}$ as follows:%
\begin{equation}
f_{p,q,n}:=\left\{ 
\begin{array}{c}
\alpha ,\text{ \ \ \ \ \ \ \ \ \ \ \ \ \ \ \ \ \ \ \ \ \ \ \ \ \ \ \ \ \ \ \
\ \ \ \ \ \ if }n=0 \\ 
\beta +\gamma p,\text{ \ \ \ \ \ \ \ \ \ \ \ \ \ \ \ \ \ \ \ \ \ \ \ \ \ \ \
\ \ \ if }n=1 \\ 
pf_{p,q,n-1}+qf_{p,q,n-2},\text{\ \ \ \ \ \ \ \ \ \ \ \ \ if }n\geq 2%
\end{array}%
\right. ,  \tag{1.1}
\end{equation}%
\begin{equation}
l_{p,q,n}:=\left\{ 
\begin{array}{c}
\alpha ,\text{ \ \ \ \ \ \ \ \ \ \ \ \ \ \ \ \ \ \ \ \ \ \ \ \ \ \ \ \ \ \ \
\ \ \ \ \ if }n=0 \\ 
\beta +2\gamma p,\text{ \ \ \ \ \ \ \ \ \ \ \ \ \ \ \ \ \ \ \ \ \ \ \ \ \ \
\ if }n=1 \\ 
2pl_{p,q,n-1}+ql_{p,q,n-2},\text{\ \ \ \ \ \ \ \ \ \ \ if }n\geq 2%
\end{array}%
\right. ,  \tag{1.2}
\end{equation}%
and%
\begin{equation}
C_{p,q,n}:=\left\{ 
\begin{array}{c}
\alpha ,\text{ \ \ \ \ \ \ \ \ \ \ \ \ \ \ \ \ \ \ \ \ \ \ \ \ \ \ \ \ \ \ \
\ \ if }n=0 \\ 
\beta +\gamma p,\text{ \ \ \ \ \ \ \ \ \ \ \ \ \ \ \ \ \ \ \ \ \ \ \ \ \ \
if }n=1 \\ 
pC_{p,q,n-1}+2qC_{p,q,n-2},\text{\ \ \ \ \ \ if }n\geq 2%
\end{array}%
\right. .  \tag{1.3}
\end{equation}%
The particular cases of Eqs. (1.1), (1.2) and (1.3) are listed below:

\begin{remark}
Let $\alpha =\gamma =0$ and $\beta =1$ in Eqs. (1.1), (1.2) and (1.3), then
we get:
\end{remark}

- $\left( p,q\right) $-\textit{Fibonacci numbers} $\left\{ F_{p,q,n}\right\}
_{n\geq 0}$, defined recursively by:%
\begin{equation}
\left\{ 
\begin{array}{l}
F_{p,q,0}=0,\text{ }F_{p,q,1}=1 \\ 
F_{p,q,n}=pF_{p,q,n-1}+qF_{p,q,n-2}\text{ }\left( n\geq 2\right)%
\end{array}%
\right. .  \tag{1.4}
\end{equation}

- $\left( p,q\right) $-\textit{Pell numbers} $\left\{ P_{p,q,n}\right\}
_{n\geq 0}$, defined recursively by:%
\begin{equation}
\left\{ 
\begin{array}{l}
P_{p,q,0}=0,\text{ }P_{p,q,1}=1 \\ 
P_{p,q,n}=2pP_{p,q,n-1}+qP_{p,q,n-2}\text{ }\left( n\geq 2\right)%
\end{array}%
\right. .  \tag{1.5}
\end{equation}

- $\left( p,q\right) $-\textit{Jacobsthal numbers} $\left\{
J_{p,q,n}\right\} _{n\geq 0}$, defined recursively by:%
\begin{equation}
\left\{ 
\begin{array}{l}
J_{p,q,0}=0,\text{ }J_{p,q,1}=1 \\ 
J_{p,q,n}=pJ_{p,q,n-1}+2qJ_{p,q,n-2}\text{ }\left( n\geq 2\right)%
\end{array}%
\right. .  \tag{1.6}
\end{equation}

\begin{remark}
Let $\alpha =2,$ $\gamma =1$ and $\beta =0$ in Eqs. (1.1), (1.2) and (1.3),
then we get:
\end{remark}

- $\left( p,q\right) $-\textit{Lucas} \textit{numbers} $\left\{
L_{p,q,n}\right\} _{n\geq 0}$, defined recursively by:%
\begin{equation}
\left\{ 
\begin{array}{l}
L_{p,q,0}=2,\text{ }L_{p,q,1}=p \\ 
L_{p,q,n}=pL_{p,q,n-1}+qL_{p,q,n-2}\text{ }\left( n\geq 2\right)%
\end{array}%
\right. .  \tag{1.7}
\end{equation}

- $\left( p,q\right) $-\textit{Pell Lucas numbers} $\left\{
Q_{p,q,n}\right\} _{n\geq 0}$, defined recursively by:%
\begin{equation}
\left\{ 
\begin{array}{l}
Q_{p,q,0}=2,\text{ }Q_{p,q,1}=2p \\ 
Q_{p,q,n}=2pQ_{p,q,n-1}+qQ_{p,q,n-2}\text{ }\left( n\geq 2\right)%
\end{array}%
\right. .  \tag{1.8}
\end{equation}

- $\left( p,q\right) $-\textit{Jacobsthal Lucas numbers} $\left\{
j_{p,q,n}\right\} _{n\geq 0}$, defined recursively by:%
\begin{equation}
\left\{ 
\begin{array}{l}
j_{p,q,0}=2,\text{ }j_{p,q,1}=p \\ 
j_{p,q,n}=pj_{p,q,n-1}+2qj_{p,q,n-2}\text{ }\left( n\geq 2\right)%
\end{array}%
\right. .  \tag{1.9}
\end{equation}

In\ fact,\ these $\left( p,q\right) $-numbers\ are\ the generalization of
some $k$-numbers,\ like\ $k$-Fibonacci,\ $k$-Pell,\ $k$-Jacobsthal, $k$%
-Lucas, $k$-Pell Lucas and\ $k$-Jacobsthal\ Lucas\ numbers,\ the following
names, notations and recurrence relations (see Tab. 1) are used for the
special cases for $p$ and$\ q$ of Eqs. (1.4)-(1.9) respectively.%
\begin{equation*}
\begin{tabular}{|c|c|c|c|c|}
\hline
$p$ & $q$ & $k$\textbf{-numbers} & \textbf{Notation} & \textbf{Recurrence
relation} \\ \hline
$k$ & $1$ & $k$-Fibonacci & $F_{k,n}$ & $\left\{ 
\begin{array}{l}
F_{k,0}=0,\text{ }F_{k,1}=1 \\ 
F_{k,n}=kF_{k,n-1}+F_{k,n-2}\text{ }\left( n\geq 2\right)%
\end{array}%
\right. $ \\ \hline
$1$ & $k$ & $k$-Pell & $P_{k,n}$ & $\left\{ 
\begin{array}{l}
P_{k,0}=0,\text{ }P_{k,1}=1 \\ 
P_{k,n}=2P_{k,n-1}+kP_{k,n-2}\text{ }\left( n\geq 2\right)%
\end{array}%
\right. $ \\ \hline
$k$ & $1$ & $k$-Jacobsthal & $J_{k,n}$ & $\left\{ 
\begin{array}{l}
J_{k,0}=0,\text{ }J_{k,1}=1 \\ 
J_{k,n}=kJ_{k,n-1}+2J_{k,n-2}\text{ }\left( n\geq 2\right)%
\end{array}%
\right. $ \\ \hline
$k$ & $1$ & $k$-Lucas & $L_{k,n}$ & $\left\{ 
\begin{array}{l}
L_{k,0}=2,\text{ }L_{k,1}=k \\ 
L_{k,n}=kL_{k,n-1}+L_{k,n-2}\text{ }\left( n\geq 2\right)%
\end{array}%
\right. $ \\ \hline
$1$ & $k$ & $k$-Pell Lucas & $Q_{k,n}$ & $\left\{ 
\begin{array}{l}
Q_{k,0}=2,\text{ }Q_{k,1}=2 \\ 
Q_{k,n}=2Q_{k,n-1}+kQ_{k,n-2}\text{ }\left( n\geq 2\right)%
\end{array}%
\right. $ \\ \hline
$k$ & $1$ & $k$-Jacobsthal Lucas & $j_{k,n}$ & $\left\{ 
\begin{array}{l}
j_{k,0}=2,\text{ }j_{k,1}=k \\ 
j_{k,n}=kj_{k,n-1}+2j_{k,n-2}\text{ }\left( n\geq 2\right)%
\end{array}%
\right. $ \\ \hline
\end{tabular}%
\end{equation*}

\begin{center}
\textbf{Table 1. }Some\textbf{\ }$k$-numbers.
\end{center}

For the special case\emph{\ }$k=1$ in the Tab.\ 1, the\emph{\ }sequences of
Fibonacci, Pell, Jacobsthal, Lucas, Pell Lucas and Jacobsthal Lucas\
numbers\ are\ respectively obtained.

Next, we present some backgrounds and results about the\ symmetric functions.

\begin{definition}
\cite{Merca}\ Let $k$ and $n$ be two positive integers and $\left\{
a_{1},a_{2},...,a_{n}\right\} $ are set of given variables the $k^{th}$
complete homogeneous symmetric function $h_{k}\left(
a_{1},a_{2},...,a_{n}\right) $ is defined by:%
\begin{equation*}
h_{k}\left( a_{1},a_{2},...,a_{n}\right) =\underset{i_{1}+i_{2}+...+i_{n}=k}{%
\sum }a_{1}^{i_{1}}a_{2}^{i_{2}}...a_{n}^{i_{n}}\text{ \ \ \ \ \ \ \ \ \ \ }%
\left( k\geq 0\right) ,
\end{equation*}%
with $i_{1},i_{2},...,i_{n}\geq 0.$
\end{definition}

\begin{remark}
Set $h_{0}\left( a_{1},a_{2},...,a_{n}\right) =1,$ by usual convention. For $%
k<0,$ we set $h_{k}\left( a_{1},a_{2},...,a_{n}\right) =0.$
\end{remark}

\begin{definition}
\cite{Abderrezzak}\ Let\ $A$\ and $E$ be any two alphabets. We define $%
S_{n}(A-E)$ by the following form:%
\begin{equation}
\frac{\tprod\limits_{e\in E}(1-ez)}{\tprod\limits_{a\in A}(1-az)}%
=\sum\limits_{n=0}^{\infty }S_{n}(A-E)z^{n},  \tag{1.10}
\end{equation}%
with the condition $S_{n}(A-E)=0\ $for\ $n<0.$
\end{definition}

Equation (1.10) can be rewritten in the following form:%
\begin{equation*}
\dsum\limits_{n=0}^{\infty }S_{n}(A-E)z^{n}=\left(
\dsum\limits_{n=0}^{\infty }S_{n}(A)z^{n}\right) \times \left(
\dsum\limits_{n=0}^{\infty }S_{n}(-E)z^{n}\right) ,
\end{equation*}%
where 
\begin{equation*}
S_{n}(A-E)=\dsum\limits_{j=0}^{n}S_{n-j}(-E)S_{j}(A).
\end{equation*}

\begin{definition}
\cite{Macdonald} Given a function $f$ on $\mathbb{R}^{n}$, the divided
difference operator is defined as follows:%
\begin{equation*}
\partial _{e_{i}e_{i+1}}(f)=\frac{f(e_{1},\cdots ,e_{i},e_{i+1},\cdots
,e_{n})-f(e_{1},\cdots ,e_{i-1},e_{i+1},e_{i},e_{i+2},\cdots ,e_{n})}{%
e_{i}-e_{i+1}}.
\end{equation*}
\end{definition}

\begin{definition}
\cite{Nabiha 2} Let $n$ be positive integer and\ $E=\left\{
e_{1},e_{2}\right\} \ $are set of given variables. Then, the $n^{th}$
symmetric function $S_{n}(e_{1}+e_{2})$ is defined by:%
\begin{equation*}
S_{n}(E)=S_{n}(e_{1}+e_{2})=\frac{e_{1}^{n+1}-e_{2}^{n+1}}{e_{1}-e_{2}},
\end{equation*}%
with%
\begin{eqnarray*}
S_{0}(E) &=&S_{0}(e_{1}+e_{2})=1, \\
S_{1}(E) &=&S_{1}(e_{1}+e_{2})=e_{1}+e_{2}, \\
S_{2}(E) &=&S_{2}(e_{1}+e_{2})=e_{1}^{2}+e_{1}e_{2}+e_{2}^{2}, \\
&&\vdots
\end{eqnarray*}
\end{definition}

\begin{proposition}
\cite{NABIHA} For\ $n\in 
\mathbb{N}
,$ the symmetric functions of $\left( p,q\right) $-Fibonacci and $\left(
p,q\right) $-Lucas numbers, $\left( p,q\right) $-Jacobsthal\ and $\left(
p,q\right) $-Jacobsthal Lucas numbers, $\left( p,q\right) $-Pell and $\left(
p,q\right) $-Pell Lucas numbers are given by:%
\begin{eqnarray*}
F_{p,q,n} &=&S_{n-1}\left( e_{1}+\left[ -e_{2}\right] \right) \text{ and }%
L_{p,q,n}=2S_{n}\left( e_{1}+\left[ -e_{2}\right] \right) -pS_{n-1}\left(
e_{1}+\left[ -e_{2}\right] \right) ,\text{ with }e_{1,2}=\frac{p\pm \sqrt{%
p^{2}+4q}}{2}. \\
J_{p,q,n} &=&S_{n-1}\left( e_{1}+\left[ -e_{2}\right] \right) \text{ and }%
j_{p,q,n}=2S_{n}\left( e_{1}+\left[ -e_{2}\right] \right) -pS_{n-1}\left(
e_{1}+\left[ -e_{2}\right] \right) ,\text{ with }e_{1,2}=\frac{p\pm \sqrt{%
p^{2}+8q}}{2}. \\
P_{p,q,n} &=&S_{n-1}\left( e_{1}+\left[ -e_{2}\right] \right) \text{ and }%
Q_{p,q,n}=2S_{n}\left( e_{1}+\left[ -e_{2}\right] \right) -2pS_{n-1}\left(
e_{1}+\left[ -e_{2}\right] \right) ,\text{ with }e_{1,2}=p\pm \sqrt{p^{2}+q}.
\end{eqnarray*}
\end{proposition}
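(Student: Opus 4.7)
The plan is to deduce all six identities from Binet's formula applied to each recurrence, together with the closed form for $S_n$ given in Definition 6. I will treat the Fibonacci/Lucas pair in detail; the Jacobsthal/Jacobsthal Lucas and Pell/Pell Lucas pairs follow the identical template with different constants, so only the constants that appear in the roots and in the linear combination need to be tracked.

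First I would identify the characteristic roots. The recurrence $F_{p,q,n}=pF_{p,q,n-1}+qF_{p,q,n-2}$ has characteristic polynomial $x^{2}-px-q=0$, whose roots are $e_{1,2}=(p\pm\sqrt{p^{2}+4q})/2$; by Vieta these satisfy $e_{1}+e_{2}=p$ and $e_{1}e_{2}=-q$, which matches the proposition. The same computation applied to $x^{2}-px-2q=0$ and $x^{2}-2px-q=0$ yields the Jacobsthal and Pell root pairs stated. The characteristic equations for the Lucas-type sequences coincide with those of their Fibonacci-type counterparts, so no additional roots have to be found.

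Next I would apply Binet. For the sequences with initial data $0,1$, standard linear-recurrence theory gives
\[
F_{p,q,n}=\frac{e_{1}^{n}-e_{2}^{n}}{e_{1}-e_{2}},\qquad J_{p,q,n}=\frac{e_{1}^{n}-e_{2}^{n}}{e_{1}-e_{2}},\qquad P_{p,q,n}=\frac{e_{1}^{n}-e_{2}^{n}}{e_{1}-e_{2}},
\]
each with its own pair $(e_{1},e_{2})$. By Definition 6, shifting the index to $n-1$, each of these is exactly $S_{n-1}(e_{1}+[-e_{2}])$, giving half of the proposition immediately. For the Lucas-type sequences $L_{p,q,n}$, $j_{p,q,n}$, $Q_{p,q,n}$, solving the linear system imposed by the initial conditions $2,p$ or $2,2p$ produces $u_{n}=e_{1}^{n}+e_{2}^{n}$ in all three cases.

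The only step that is not purely mechanical is verifying the algebraic identity
\[
e_{1}^{n}+e_{2}^{n}=2S_{n}(e_{1}+[-e_{2}])-c\,S_{n-1}(e_{1}+[-e_{2}]),
\]
where $c=p$ for Lucas and Jacobsthal Lucas and $c=2p$ for Pell Lucas. Substituting the closed form from Definition 6 and using $c=e_{1}+e_{2}$ in each case, the right-hand side becomes
\[
\frac{2(e_{1}^{n+1}-e_{2}^{n+1})-(e_{1}+e_{2})(e_{1}^{n}-e_{2}^{n})}{e_{1}-e_{2}},
\]
and expanding the numerator collapses it to $(e_{1}^{n}+e_{2}^{n})(e_{1}-e_{2})$, which cancels the denominator. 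This identity is the only genuine computation in the proof; I do not expect any real obstacle beyond carrying out this short manipulation once and noting that the same algebra applies verbatim to the three Lucas-type cases, since the only input used is the relation between $c$ and $e_{1}+e_{2}$.
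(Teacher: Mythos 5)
Your proposal is correct, but note that the paper itself offers no proof of this proposition to compare against: it is imported verbatim from the cited reference \cite{NABIHA} and used as a black box in Sections 3 and 4. Your Binet-formula argument is the natural and standard way to establish it, and every step checks out: the Vieta relations identify $e_{1}+e_{2}$ with $p$ (resp.\ $2p$) and $e_{1}e_{2}$ with $-q$ (resp.\ $-2q$, $-q$), the $0,1$ initial data force the coefficients $\pm 1/(e_{1}-e_{2})$, the $2,\cdot$ initial data force $u_{n}=e_{1}^{n}+e_{2}^{n}$, and your final algebraic identity $2S_{n}-(e_{1}+e_{2})S_{n-1}=e_{1}^{n}+e_{2}^{n}$ does collapse as you claim. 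Two small remarks. First, the closed form $S_{n}(e_{1}+e_{2})=(e_{1}^{n+1}-e_{2}^{n+1})/(e_{1}-e_{2})$ you invoke is Definition 4 in this paper, not Definition 6. Second, you have silently (and correctly) resolved a notational inconsistency in the source: in the proposition $e_{1},e_{2}$ denote the two characteristic roots and $S_{n-1}(e_{1}+[-e_{2}])$ must be read as $(e_{1}^{n}-e_{2}^{n})/(e_{1}-e_{2})$, whereas in Section 3 the paper switches to the convention that the roots are $e_{1}$ and $-e_{2}$ (so that $e_{1}-e_{2}=p$, $e_{1}e_{2}=q$). Your reading is the only one under which the stated formulas are true, so this is a feature of your write-up rather than a gap, but it would be worth one explicit sentence acknowledging the change of convention if your proof is to sit alongside the paper's later computations.
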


\begin{definition}
\cite{Bouss5}$\ $Given an alphabet $E=\left\{ e_{1},e_{2}\right\} $, the
symmetrizing operator $\delta _{e_{1}e_{2}}^{k}$ is defined by:%
\begin{equation}
\delta _{e_{1}e_{2}}^{k}(f)=\dfrac{e_{1}^{k}f(e_{1})-e_{2}^{k}f(e_{2})}{%
e_{1}-e_{2}},\ \left( k\in 
\mathbb{N}
_{0}:=%
\mathbb{N}
\cup \left\{ 0\right\} =\left\{ 0,1,2,3,\cdots \right\} \right) .  \tag{1.11}
\end{equation}
\end{definition}

\begin{remark}
If$\ k=0,$ the operator $(1.11)$ gives us:%
\begin{eqnarray*}
\delta _{e_{1}e_{2}}^{0}(f) &=&\dfrac{f(e_{1})-f(e_{2})}{e_{1}-e_{2}} \\
&=&\partial _{e_{1}e_{2}}(f).
\end{eqnarray*}
\end{remark}

In the present article, the sections are organized as follows: Section 2
gives new theorems by using the symmetrizing operator $\delta
_{e_{1}e_{2}}^{k}.$ By using the theorems given in the previous section, the
new\ generating functions for odd and even terms of\emph{\ }$(p,q)$-numbers
are constructed in Section 3. Section 4 gives the new generating functions
for the products of $\left( p,q\right) $-numbers with odd and even $\left(
p,q\right) $-numbers.\ Finally, some concluded remarks are discussed in
Section 5.

\section{\textbf{New theorems on symmetric functions and their proofs}}

In this part, we are now in a position to provide some\emph{\ }new theorems
by using the symmetrizing operator $\delta _{e_{1}e_{2}}^{k}$ for $k\in
\left\{ 0,1,2,3,4\right\} .$ We now begin with the following theorem.

\begin{theorem}
Given two alphabets $A=\left\{ a_{1},a_{2}\right\} $ and $E=\left\{
e_{1},e_{2}\right\} ,$ we have:%
\begin{equation}
\sum\limits_{n=0}^{\infty }S_{n}\left( A\right) S_{2n-1}\left( E\right)
z^{n}=\frac{\left( a_{1}+a_{2}\right) \left( e_{1}+e_{2}\right)
z-a_{1}a_{2}\left( e_{1}+e_{2}\right) \left( \left( e_{1}+e_{2}\right)
^{2}-2e_{1}e_{2}\right) z^{2}}{\left( \dsum\limits_{n=0}^{\infty
}S_{n}\left( -A\right) e_{1}^{2n}z^{n}\right) \left(
\dsum\limits_{n=0}^{\infty }S_{n}\left( -A\right) e_{2}^{2n}z^{n}\right) }. 
\tag{2.1}
\end{equation}
\end{theorem}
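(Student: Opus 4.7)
The plan is to interpret the left-hand side as a single application of the symmetrizing operator $\delta^{0}_{e_{1}e_{2}}$ (which by Remark~2 coincides with the divided difference $\partial_{e_{1}e_{2}}$) to a compact rational function of $e$, and then evaluate the same operator directly from the closed form.

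First, I set $f(e)=\dfrac{1}{(1-a_{1}e^{2}z)(1-a_{2}e^{2}z)}$. Applying Definition~4 to the alphabet $A=\{a_{1},a_{2}\}$ with dummy variable $e^{2}z$ gives the power-series expansion $f(e)=\sum_{n\ge 0}S_{n}(A)\,e^{2n}z^{n}$. Substituting $e=e_{1}$ and $e=e_{2}$ termwise into the operator yields
\begin{equation*}
\delta^{0}_{e_{1}e_{2}}(f)=\frac{f(e_{1})-f(e_{2})}{e_{1}-e_{2}}=\sum_{n\ge 0}S_{n}(A)\,\frac{e_{1}^{2n}-e_{2}^{2n}}{e_{1}-e_{2}}\,z^{n}=\sum_{n\ge 0}S_{n}(A)\,S_{2n-1}(E)\,z^{n},
\end{equation*}
where the last equality uses the two-variable formula $S_{m}(E)=(e_{1}^{m+1}-e_{2}^{m+1})/(e_{1}-e_{2})$ from Definition~6 with $m=2n-1$. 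This identifies the LHS of $(2.1)$ with $\delta^{0}_{e_{1}e_{2}}(f)$.

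Next, I compute $\delta^{0}_{e_{1}e_{2}}(f)$ directly from the closed form. Placing $f(e_{1})$ and $f(e_{2})$ over the common denominator $(1-a_{1}e_{1}^{2}z)(1-a_{2}e_{1}^{2}z)(1-a_{1}e_{2}^{2}z)(1-a_{2}e_{2}^{2}z)$ and expanding the resulting numerator gives $(a_{1}+a_{2})(e_{1}^{2}-e_{2}^{2})z-a_{1}a_{2}(e_{1}^{4}-e_{2}^{4})z^{2}$. Factoring $e_{1}-e_{2}$ out of both difference-of-powers terms and dividing, then extracting the further common factor $e_{1}+e_{2}$ and using the identity $e_{1}^{2}+e_{2}^{2}=(e_{1}+e_{2})^{2}-2e_{1}e_{2}$, reproduces exactly the numerator displayed in $(2.1)$. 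For the denominator, a second use of Definition~4 (with the alphabet roles exchanged, so that the polynomial $(1-a_{1}e_{i}^{2}z)(1-a_{2}e_{i}^{2}z)$ expands as $\sum_{n\ge 0}S_{n}(-A)\,e_{i}^{2n}z^{n}$ for $i=1,2$) shows that the product of the two resulting series matches the denominator on the RHS of $(2.1)$.

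The only step requiring genuine care is the post-subtraction simplification: one must verify that $e_{1}-e_{2}$ divides both $e_{1}^{2}-e_{2}^{2}$ and $e_{1}^{4}-e_{2}^{4}$ cleanly, and then recognize the additional common factor of $e_{1}+e_{2}$ that separates out to produce the $(e_{1}+e_{2})$ prefactor in each term of the numerator. Beyond that bookkeeping, the argument is a direct unpacking of Definitions~4 and~6 together with the defining formula $(1.11)$ of the symmetrizing operator at $k=0$.
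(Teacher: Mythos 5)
Your proposal is correct and follows essentially the same route as the paper: identify the left-hand side as the divided difference $\partial_{e_{1}e_{2}}$ applied to $\sum_{n}S_{n}(A)e^{2n}z^{n}$, then compute that divided difference of the reciprocal series over a common denominator and simplify. The only cosmetic difference is that you write the reciprocal explicitly as the product $(1-a_{1}e^{2}z)(1-a_{2}e^{2}z)$, whereas the paper keeps it as $\sum_{n}S_{n}(-A)e^{2n}z^{n}$ and uses the vanishing of $S_{n}(-A)$ for $n>2$; these are the same fact and the resulting algebra is identical.
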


\begin{proof}
By applying the divided difference operator $\partial _{e_{1}e_{2}}\ $to the
series$\ f\left( e_{1}z\right) =\dsum\limits_{n=0}^{\infty }S_{n}\left(
A\right) e_{1}^{2n}z^{n}$, the left-hand side of the formula (2.1) can be
written as:%
\begin{eqnarray*}
\partial _{e_{1}e_{2}}f\left( e_{1}z\right) &=&\frac{\dsum\limits_{n=0}^{%
\infty }S_{n}\left( A\right) e_{1}^{2n}z^{n}-\dsum\limits_{n=0}^{\infty
}S_{n}\left( A\right) e_{2}^{2n}z^{n}}{e_{1}-e_{2}} \\
&=&\sum_{n=0}^{\infty }S_{n}\left( A\right) \left( \frac{%
e_{1}^{2n}-e_{2}^{2n}}{e_{1}-e_{2}}\right) z^{n} \\
&=&\sum_{n=0}^{\infty }S_{n}\left( A\right) S_{2n-1}\left( E\right) z^{n}.
\end{eqnarray*}

By applying the divided difference operator $\partial _{e_{1}e_{2}}\ $to the
series$\ f\left( e_{1}z\right) =\frac{1}{\dsum\limits_{n=0}^{\infty
}S_{n}\left( -A\right) e_{1}^{2n}z^{n}}$, the right-hand side of the formula
(2.1) can be expressed as:%
\begin{eqnarray*}
\partial _{e_{1}e_{2}}f\left( e_{1}z\right) &=&\frac{\frac{1}{%
\dsum\limits_{n=0}^{\infty }S_{n}\left( -A\right) e_{1}^{2n}z^{n}}-\frac{1}{%
\dsum\limits_{n=0}^{\infty }S_{n}\left( -A\right) e_{2}^{2n}z^{n}}}{%
e_{1}-e_{2}} \\
&=&\frac{\dsum\limits_{n=0}^{\infty }S_{n}\left( -A\right)
e_{2}^{2n}z^{n}-\dsum\limits_{n=0}^{\infty }S_{n}\left( -A\right)
e_{1}^{2n}z^{n}}{\left( e_{1}-e_{2}\right) \left( \dsum\limits_{n=0}^{\infty
}S_{n}\left( -A\right) e_{1}^{2n}z^{n}\right) \left(
\dsum\limits_{n=0}^{\infty }S_{n}\left( -A\right) e_{2}^{2n}z^{n}\right) } \\
&=&\frac{\dsum\limits_{n=0}^{\infty }S_{n}\left( -A\right) \frac{%
e_{2}^{2n}-e_{1}^{2n}}{e_{1}-e_{2}}z^{n}}{\left( \dsum\limits_{n=0}^{\infty
}S_{n}\left( -A\right) e_{1}^{2n}z^{n}\right) \left(
\dsum\limits_{n=0}^{\infty }S_{n}\left( -A\right) e_{2}^{2n}z^{n}\right) }
\end{eqnarray*}%
\begin{eqnarray*}
&=&\frac{S_{1}\left( -A\right) \frac{e_{2}^{2}-e_{1}^{2}}{e_{1}-e_{2}}%
z+S_{2}\left( -A\right) \frac{e_{2}^{4}-e_{1}^{4}}{e_{1}-e_{2}}z^{2}}{\left(
\dsum\limits_{n=0}^{\infty }S_{n}\left( -A\right) e_{1}^{2n}z^{n}\right)
\left( \dsum\limits_{n=0}^{\infty }S_{n}\left( -A\right)
e_{2}^{2n}z^{n}\right) } \\
&=&\frac{-S_{1}\left( -A\right) \left( e_{1}+e_{2}\right) z-S_{2}\left(
-A\right) \left( \left( e_{1}+e_{2}\right) ^{3}-2e_{1}e_{2}\left(
e_{1}+e_{2}\right) \right) z^{2}}{\left( \dsum\limits_{n=0}^{\infty
}S_{n}\left( -A\right) e_{1}^{2n}z^{n}\right) \left(
\dsum\limits_{n=0}^{\infty }S_{n}\left( -A\right) e_{2}^{2n}z^{n}\right) } \\
&=&\frac{\left( a_{1}+a_{2}\right) \left( e_{1}+e_{2}\right)
z-a_{1}a_{2}\left( e_{1}+e_{2}\right) \left( \left( e_{1}+e_{2}\right)
^{2}-2e_{1}e_{2}\right) z^{2}}{\left( \dsum\limits_{n=0}^{\infty
}S_{n}\left( -A\right) e_{1}^{2n}z^{n}\right) \left(
\dsum\limits_{n=0}^{\infty }S_{n}\left( -A\right) e_{2}^{2n}z^{n}\right) }.
\end{eqnarray*}

Thus, this completes the proof.
\end{proof}

\begin{theorem}
Given two alphabets $A=\left\{ a_{1},a_{2}\right\} $ and $E=\left\{
e_{1},e_{2}\right\} ,$ we have:%
\begin{equation}
\sum\limits_{n=0}^{\infty }S_{n}\left( A\right) S_{2n}\left( E\right) z^{n}=%
\frac{1+e_{1}e_{2}\left( a_{1}+a_{2}\right) z-e_{1}e_{2}a_{1}a_{2}\left(
\left( e_{1}+e_{2}\right) ^{2}-e_{1}e_{2}\right) z^{2}}{\left(
\dsum\limits_{n=0}^{\infty }S_{n}\left( -A\right) e_{1}^{2n}z^{n}\right)
\left( \dsum\limits_{n=0}^{\infty }S_{n}\left( -A\right)
e_{2}^{2n}z^{n}\right) }.  \tag{2.2}
\end{equation}
\end{theorem}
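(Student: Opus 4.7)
The plan is to mimic the proof of Theorem 1, but replace the divided difference operator $\partial_{e_1 e_2} = \delta^0_{e_1 e_2}$ with the symmetrizing operator $\delta^1_{e_1 e_2}$ defined in (1.11), i.e.\ the case $k=1$. The motivation is that $S_{2n}(E) = \frac{e_1^{2n+1} - e_2^{2n+1}}{e_1 - e_2}$, which is precisely what one obtains by applying $\delta^1_{e_1 e_2}$ to the monomial $e_1^{2n}$.

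First I would handle the left-hand side. Setting $f(e_1 z) = \sum_{n=0}^{\infty} S_n(A) e_1^{2n} z^n$, I would compute
\begin{equation*}
\delta^1_{e_1 e_2} f(e_1 z) = \frac{e_1 \sum_{n \ge 0} S_n(A) e_1^{2n} z^n - e_2 \sum_{n \ge 0} S_n(A) e_2^{2n} z^n}{e_1 - e_2} = \sum_{n=0}^{\infty} S_n(A)\, \frac{e_1^{2n+1} - e_2^{2n+1}}{e_1 - e_2}\, z^n,
\end{equation*}
which is exactly $\sum_{n \ge 0} S_n(A) S_{2n}(E) z^n$ by Definition 7.

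Next I would apply $\delta^1_{e_1 e_2}$ to the reciprocal series $g(e_1 z) = 1/\sum_{n \ge 0} S_n(-A) e_1^{2n} z^n$. Putting the difference over the common denominator $\bigl(\sum S_n(-A) e_1^{2n} z^n\bigr)\bigl(\sum S_n(-A) e_2^{2n} z^n\bigr)$ yields a numerator
\begin{equation*}
\sum_{n=0}^{\infty} S_n(-A)\, \frac{e_1 e_2^{2n} - e_2 e_1^{2n}}{e_1 - e_2}\, z^n.
\end{equation*}
Since $A = \{a_1, a_2\}$ has cardinality $2$, one has $\sum_{n \ge 0} S_n(-A) z^n = (1 - a_1 z)(1 - a_2 z)$, so $S_n(-A) = 0$ for $n \ge 3$, $S_1(-A) = -(a_1 + a_2)$, and $S_2(-A) = a_1 a_2$; hence the sum truncates at $n = 2$.

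The only genuine calculation is then evaluating $\frac{e_1 e_2^{2n} - e_2 e_1^{2n}}{e_1 - e_2}$ for $n = 0, 1, 2$ and re-expressing the result in the symmetric variables $e_1 + e_2$ and $e_1 e_2$. One finds $1$ for $n=0$; for $n=1$, factoring $e_1 e_2$ gives $-e_1 e_2$; for $n=2$, factoring $e_1 e_2$ and the identity $e_1^2 + e_1 e_2 + e_2^2 = (e_1 + e_2)^2 - e_1 e_2$ yields $-e_1 e_2\bigl((e_1+e_2)^2 - e_1 e_2\bigr)$. Substituting these three values together with the formulas for $S_1(-A)$ and $S_2(-A)$ produces the numerator
\begin{equation*}
1 + e_1 e_2 (a_1 + a_2) z - e_1 e_2\, a_1 a_2 \bigl((e_1+e_2)^2 - e_1 e_2\bigr) z^2,
\end{equation*}
matching the right-hand side of (2.2). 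The main (modest) obstacle is purely algebraic: keeping the signs straight when symmetrizing $e_1 e_2^{2n} - e_2 e_1^{2n}$ and combining them with the signs inside $S_n(-A)$, but both sign swaps conspire to produce the displayed plus and minus signs in the numerator.
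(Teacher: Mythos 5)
Your proposal is correct and is essentially identical to the paper's own proof: the paper also applies the symmetrizing operator $\delta_{e_{1}e_{2}}$ (the $k=1$ case of Definition (1.11), which you write as $\delta_{e_{1}e_{2}}^{1}$) to the series $\sum_{n\geq 0}S_{n}(A)e_{1}^{2n}z^{n}$ and to its reciprocal $1/\sum_{n\geq 0}S_{n}(-A)e_{1}^{2n}z^{n}$, truncates the numerator at $n=2$ using $S_{n}(-A)=0$ for $n\geq 3$, and evaluates the same three symmetrized monomials. Your sign bookkeeping and the final numerator match the paper's computation exactly.
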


\begin{proof}
By applying the operator $\delta _{e_{1}e_{2}}\ $to the series$\ f\left(
e_{1}z\right) =\dsum\limits_{n=0}^{\infty }S_{n}\left( A\right)
e_{1}^{2n}z^{n}$, the left-hand side of the formula (2.2) can be written as:%
\begin{eqnarray*}
\delta _{e_{1}e_{2}}f\left( e_{1}z\right) &=&\frac{e_{1}\dsum\limits_{n=0}^{%
\infty }S_{n}\left( A\right) e_{1}^{2n}z^{n}-e_{2}\dsum\limits_{n=0}^{\infty
}S_{n}\left( A\right) e_{2}^{2n}z^{n}}{e_{1}-e_{2}} \\
&=&\sum_{n=0}^{\infty }S_{n}\left( A\right) \left( \frac{%
e_{1}^{2n+1}-e_{2}^{2n+1}}{e_{1}-e_{2}}\right) z^{n} \\
&=&\sum_{n=0}^{\infty }S_{n}\left( A\right) S_{2n}\left( E\right) z^{n}.
\end{eqnarray*}

By applying the operator $\delta _{e_{1}e_{2}}\ $to the series$\ f\left(
e_{1}z\right) =\frac{1}{\dsum\limits_{n=0}^{\infty }S_{n}\left( -A\right)
e_{1}^{2n}z^{n}}$, the right-hand side of the formula (2.2) can be expressed
as:%
\begin{eqnarray*}
\delta _{e_{1}e_{2}}f\left( e_{1}z\right) &=&\frac{\frac{e_{1}}{%
\dsum\limits_{n=0}^{\infty }S_{n}\left( -A\right) e_{1}^{2n}z^{n}}-\frac{%
e_{2}}{\dsum\limits_{n=0}^{\infty }S_{n}\left( -A\right) e_{2}^{2n}z^{n}}}{%
e_{1}-e_{2}} \\
&=&\frac{e_{1}\dsum\limits_{n=0}^{\infty }S_{n}\left( -A\right)
e_{2}^{2n}z^{n}-e_{2}\dsum\limits_{n=0}^{\infty }S_{n}\left( -A\right)
e_{1}^{2n}z^{n}}{\left( e_{1}-e_{2}\right) \left( \dsum\limits_{n=0}^{\infty
}S_{n}\left( -A\right) e_{1}^{2n}z^{n}\right) \left(
\dsum\limits_{n=0}^{\infty }S_{n}\left( -A\right) e_{2}^{2n}z^{n}\right) } \\
&=&\frac{\dsum\limits_{n=0}^{\infty }S_{n}\left( -A\right) \frac{%
e_{1}e_{2}^{2n}-e_{2}e_{1}^{2n}}{e_{1}-e_{2}}z^{n}}{\left(
\dsum\limits_{n=0}^{\infty }S_{n}\left( -A\right) e_{1}^{2n}z^{n}\right)
\left( \dsum\limits_{n=0}^{\infty }S_{n}\left( -A\right)
e_{2}^{2n}z^{n}\right) } \\
&=&\frac{S_{0}\left( -A\right) -e_{1}e_{2}S_{1}\left( -A\right)
z-S_{2}\left( -A\right) \frac{e_{1}^{3}-e_{2}^{3}}{e_{1}-e_{2}}z^{2}}{\left(
\dsum\limits_{n=0}^{\infty }S_{n}\left( -A\right) e_{1}^{2n}z^{n}\right)
\left( \dsum\limits_{n=0}^{\infty }S_{n}\left( -A\right)
e_{2}^{2n}z^{n}\right) } \\
&=&\frac{S_{0}\left( -A\right) -e_{1}e_{2}S_{1}\left( -A\right)
z-S_{2}\left( -A\right) e_{1}e_{2}\left( \left( e_{1}+e_{2}\right)
^{2}-e_{1}e_{2}\right) z^{2}}{\left( \dsum\limits_{n=0}^{\infty }S_{n}\left(
-A\right) e_{1}^{2n}z^{n}\right) \left( \dsum\limits_{n=0}^{\infty
}S_{n}\left( -A\right) e_{2}^{2n}z^{n}\right) } \\
&=&\frac{1+e_{1}e_{2}\left( a_{1}+a_{2}\right) z-e_{1}e_{2}a_{1}a_{2}\left(
\left( e_{1}+e_{2}\right) ^{2}-e_{1}e_{2}\right) z^{2}}{\left(
\dsum\limits_{n=0}^{\infty }S_{n}\left( -A\right) e_{1}^{2n}z^{n}\right)
\left( \dsum\limits_{n=0}^{\infty }S_{n}\left( -A\right)
e_{2}^{2n}z^{n}\right) }.
\end{eqnarray*}

Thus, this completes the proof.
\end{proof}

\begin{theorem}
Given two alphabets $A=\left\{ a_{1},a_{2}\right\} $ and $E=\left\{
e_{1},e_{2}\right\} ,$ we have:%
\begin{equation}
\sum\limits_{n=0}^{\infty }S_{n}\left( A\right) S_{2n+1}\left( E\right)
z^{n}=\frac{e_{1}+e_{2}-a_{1}a_{2}e_{1}^{2}e_{2}^{2}\left(
e_{1}+e_{2}\right) z^{2}}{\left( \dsum\limits_{n=0}^{\infty }S_{n}\left(
-A\right) e_{1}^{2n}z^{n}\right) \left( \dsum\limits_{n=0}^{\infty
}S_{n}\left( -A\right) e_{2}^{2n}z^{n}\right) }.  \tag{2.3}
\end{equation}
\end{theorem}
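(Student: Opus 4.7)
The plan is to follow the exact same strategy used in the proofs of Theorem 1 and Theorem 2, but with the symmetrizing operator $\delta_{e_{1}e_{2}}^{k}$ specialized to $k=2$. Theorem 1 used $k=0$ (the divided difference) and produced $S_{2n-1}(E)$; Theorem 2 used, effectively, $k=1$ and produced $S_{2n}(E)$; so the next case $S_{2n+1}(E)$ should arise from applying $\delta_{e_{1}e_{2}}^{2}$ to the same two generating series that appear in the earlier theorems.

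For the left-hand side, I take $f(e_{1}z)=\sum_{n=0}^{\infty}S_{n}(A)\,e_{1}^{2n}z^{n}$ and form the numerator of $\delta_{e_{1}e_{2}}^{2}f(e_{1}z)$. Term by term this produces the ratio $(e_{1}^{2n+2}-e_{2}^{2n+2})/(e_{1}-e_{2})$, which equals $S_{2n+1}(E)$ by Definition 6, so the left-hand side of (2.3) is obtained immediately. For the right-hand side, I apply the same operator $\delta_{e_{1}e_{2}}^{2}$ to the reciprocal series $1/\bigl(\sum_{n=0}^{\infty}S_{n}(-A)\,e_{1}^{2n}z^{n}\bigr)$ and reduce everything over a common denominator. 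This common denominator is exactly $\bigl(\sum_{n\geq 0}S_{n}(-A)e_{1}^{2n}z^{n}\bigr)\bigl(\sum_{n\geq 0}S_{n}(-A)e_{2}^{2n}z^{n}\bigr)$, matching the denominator in (2.3); the content of the proof therefore reduces to identifying the numerator.

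The decisive simplification is that, since $A=\{a_{1},a_{2}\}$ has only two elements, Definition 5 forces $S_{n}(-A)=0$ for $n\geq 3$, with explicit values $S_{0}(-A)=1$, $S_{1}(-A)=-(a_{1}+a_{2})$ and $S_{2}(-A)=a_{1}a_{2}$. The numerator therefore collapses to three terms of the shape $S_{n}(-A)\,(e_{1}^{2}e_{2}^{2n}-e_{2}^{2}e_{1}^{2n})/(e_{1}-e_{2})\,z^{n}$ for $n=0,1,2$. I expect the $n=0$ piece to contribute $(e_{1}+e_{2})$, the $n=1$ piece to vanish identically because $e_{1}^{2}e_{2}^{2}-e_{2}^{2}e_{1}^{2}=0$ (this is what produces the absence of a $z$-term in the numerator of (2.3)), and the $n=2$ piece, after factoring $e_{1}^{2}e_{2}^{4}-e_{2}^{2}e_{1}^{4}=-e_{1}^{2}e_{2}^{2}(e_{1}-e_{2})(e_{1}+e_{2})$, to contribute $-a_{1}a_{2}e_{1}^{2}e_{2}^{2}(e_{1}+e_{2})z^{2}$, exactly reproducing the numerator in (2.3).

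The argument is entirely mechanical and parallel to the preceding two theorems, so there is no serious obstacle. The only points requiring care are the automatic cancellation at $n=1$ and the correct factorization of the $n=2$ summand; once these are handled, the stated closed form follows directly.
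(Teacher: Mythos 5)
Your proposal is correct and follows essentially the same route as the paper's own proof of Theorem 3: apply $\delta_{e_{1}e_{2}}^{2}$ to $\sum_{n\geq 0}S_{n}(A)e_{1}^{2n}z^{n}$ to produce $\sum_{n\geq 0}S_{n}(A)S_{2n+1}(E)z^{n}$ on the left, and to the reciprocal series on the right, then use $S_{n}(-A)=0$ for $n\geq 3$ so that the numerator reduces to the $n=0,1,2$ terms, with the $n=1$ term vanishing and the $n=2$ term factoring to $-a_{1}a_{2}e_{1}^{2}e_{2}^{2}(e_{1}+e_{2})z^{2}$. The only discrepancies are cosmetic (your references to ``Definition 5'' and ``Definition 6'' do not match the paper's numbering, and the paper's displayed $n=2$ summand contains a typo $e_{2}^{3}e_{1}^{4}$ where $e_{2}^{2}e_{1}^{4}$ is meant, which your factorization silently corrects).
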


\begin{proof}
By applying the operator $\delta _{e_{1}e_{2}}^{2}\ $to the series$\ f\left(
e_{1}z\right) =\dsum\limits_{n=0}^{\infty }S_{n}\left( A\right)
e_{1}^{2n}z^{n}$, the left-hand side of the formula (2.3) can be written as:%
\begin{eqnarray*}
\delta _{e_{1}e_{2}}^{2}f\left( e_{1}z\right) &=&\frac{e_{1}^{2}\dsum%
\limits_{n=0}^{\infty }S_{n}\left( A\right)
e_{1}^{2n}z^{n}-e_{2}^{2}\dsum\limits_{n=0}^{\infty }S_{n}\left( A\right)
e_{2}^{2n}z^{n}}{e_{1}-e_{2}} \\
&=&\sum_{n=0}^{\infty }S_{n}\left( A\right) \left( \frac{%
e_{1}^{2n+2}-e_{2}^{2n+2}}{e_{1}-e_{2}}\right) z^{n} \\
&=&\sum_{n=0}^{\infty }S_{n}\left( A\right) S_{2n+1}\left( E\right) z^{n}.
\end{eqnarray*}

By applying the operator $\delta _{e_{1}e_{2}}^{2}\ $to the series$\ f\left(
e_{1}z\right) =\frac{1}{\dsum\limits_{n=0}^{\infty }S_{n}\left( -A\right)
e_{1}^{2n}z^{n}}$, the right-hand side of the formula (2.3) can be expressed
as:%
\begin{eqnarray*}
\delta _{e_{1}e_{2}}^{2}f\left( e_{1}z\right) &=&\frac{\frac{e_{1}^{2}}{%
\dsum\limits_{n=0}^{\infty }S_{n}\left( -A\right) e_{1}^{2n}z^{n}}-\frac{%
e_{2}^{2}}{\dsum\limits_{n=0}^{\infty }S_{n}\left( -A\right) e_{2}^{2n}z^{n}}%
}{e_{1}-e_{2}} \\
&=&\frac{e_{1}^{2}\dsum\limits_{n=0}^{\infty }S_{n}\left( -A\right)
e_{2}^{2n}z^{n}-e_{2}^{2}\dsum\limits_{n=0}^{\infty }S_{n}\left( -A\right)
e_{1}^{2n}z^{n}}{\left( e_{1}-e_{2}\right) \left( \dsum\limits_{n=0}^{\infty
}S_{n}\left( -A\right) e_{1}^{2n}z^{n}\right) \left(
\dsum\limits_{n=0}^{\infty }S_{n}\left( -A\right) e_{2}^{2n}z^{n}\right) } \\
&=&\frac{\dsum\limits_{n=0}^{\infty }S_{n}\left( -A\right) \frac{%
e_{1}^{2}e_{2}^{2n}-e_{2}^{2}e_{1}^{2n}}{e_{1}-e_{2}}z^{n}}{\left(
\dsum\limits_{n=0}^{\infty }S_{n}\left( -A\right) e_{1}^{2n}z^{n}\right)
\left( \dsum\limits_{n=0}^{\infty }S_{n}\left( -A\right)
e_{2}^{2n}z^{n}\right) } \\
&=&\frac{S_{0}\left( -A\right) \frac{e_{1}^{2}-e_{2}^{2}}{e_{1}-e_{2}}%
+S_{2}\left( -A\right) \frac{e_{1}^{2}e_{2}^{4}-e_{2}^{3}e_{1}^{4}}{%
e_{1}-e_{2}}z^{2}}{\left( \dsum\limits_{n=0}^{\infty }S_{n}\left( -A\right)
e_{1}^{2n}z^{n}\right) \left( \dsum\limits_{n=0}^{\infty }S_{n}\left(
-A\right) e_{2}^{2n}z^{n}\right) } \\
&=&\frac{S_{0}\left( -A\right) \left( e_{1}+e_{2}\right) -S_{2}\left(
-A\right) \left( e_{1}e_{2}\right) ^{2}\left( e_{1}+e_{2}\right) z^{2}}{%
\left( \dsum\limits_{n=0}^{\infty }S_{n}\left( -A\right)
e_{1}^{2n}z^{n}\right) \left( \dsum\limits_{n=0}^{\infty }S_{n}\left(
-A\right) e_{2}^{2n}z^{n}\right) } \\
&=&\frac{\left( e_{1}+e_{2}\right) -a_{1}a_{2}e_{1}^{2}e_{2}^{2}\left(
e_{1}+e_{2}\right) z^{2}}{\left( \dsum\limits_{n=0}^{\infty }S_{n}\left(
-A\right) e_{1}^{2n}z^{n}\right) \left( \dsum\limits_{n=0}^{\infty
}S_{n}\left( -A\right) e_{2}^{2n}z^{n}\right) }.
\end{eqnarray*}

Thus, this completes the proof.
\end{proof}

Based on the relationship (2.3) we get:%
\begin{equation}
\sum\limits_{n=0}^{\infty }S_{n-1}\left( A\right) S_{2n-1}\left( E\right)
z^{n}=\frac{\left( e_{1}+e_{2}\right) z-a_{1}a_{2}e_{1}^{2}e_{2}^{2}\left(
e_{1}+e_{2}\right) z^{3}}{\left( \dsum\limits_{n=0}^{\infty }S_{n}\left(
-A\right) e_{1}^{2n}z^{n}\right) \left( \dsum\limits_{n=0}^{\infty
}S_{n}\left( -A\right) e_{2}^{2n}z^{n}\right) }.  \tag{2.4}
\end{equation}

\begin{theorem}
Given two alphabets $A=\left\{ a_{1},a_{2}\right\} $ and $E=\left\{
e_{1},e_{2}\right\} ,$ we have:%
\begin{equation}
\sum\limits_{n=0}^{\infty }S_{n}\left( A\right) S_{2n+2}\left( E\right)
z^{n}=\frac{\left( e_{1}+e_{2}\right)
^{2}-e_{1}e_{2}-e_{1}^{2}e_{2}^{2}\left( a_{1}+a_{2}\right)
z-a_{1}a_{2}e_{1}^{3}e_{2}^{3}z^{2}}{\left( \dsum\limits_{n=0}^{\infty
}S_{n}\left( -A\right) e_{1}^{2n}z^{n}\right) \left(
\dsum\limits_{n=0}^{\infty }S_{n}\left( -A\right) e_{2}^{2n}z^{n}\right) }. 
\tag{2.5}
\end{equation}
\end{theorem}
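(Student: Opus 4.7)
The plan is to mirror the strategy used in Theorems~1--3: apply the symmetrizing operator $\delta_{e_{1}e_{2}}^{3}$ (the $k=3$ case of (1.11)) to the series
\[
F(e_{1}z)=\sum_{n=0}^{\infty}S_{n}(A)\,e_{1}^{2n}z^{n}=\frac{1}{\sum_{n=0}^{\infty}S_{n}(-A)\,e_{1}^{2n}z^{n}},
\]
where the second equality follows from Definition~5 (with the substitution $z\mapsto e_{1}^{2}z$), since $\sum_{n}S_{n}(A)z^{n}$ and $\sum_{n}S_{n}(-A)z^{n}$ are reciprocal power series when $A=\{a_{1},a_{2}\}$. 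Computing $\delta_{e_{1}e_{2}}^{3}$ in the two equivalent ways and equating the results will yield (2.5).

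First I would compute the left-hand side by applying $\delta_{e_{1}e_{2}}^{3}$ to the first form:
\[
\delta_{e_{1}e_{2}}^{3}F(e_{1}z)=\sum_{n=0}^{\infty}S_{n}(A)\,\frac{e_{1}^{2n+3}-e_{2}^{2n+3}}{e_{1}-e_{2}}\,z^{n}=\sum_{n=0}^{\infty}S_{n}(A)\,S_{2n+2}(E)\,z^{n},
\]
which is exactly the LHS of (2.5), by Definition~7 applied at index $2n+2$.

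Next, I would apply $\delta_{e_{1}e_{2}}^{3}$ to the reciprocal form. Clearing the common denominator $\bigl(\sum_{n}S_{n}(-A)e_{1}^{2n}z^{n}\bigr)\bigl(\sum_{n}S_{n}(-A)e_{2}^{2n}z^{n}\bigr)$, the numerator becomes
\[
\sum_{n=0}^{\infty}S_{n}(-A)\,\frac{e_{1}^{3}e_{2}^{2n}-e_{2}^{3}e_{1}^{2n}}{e_{1}-e_{2}}\,z^{n}.
\]
Since $A$ has only two elements, $S_{n}(-A)=0$ for $n\geq 3$, so the sum truncates to three terms. The only genuine calculation is to evaluate the quotient at $n=0,1,2$: namely $(e_{1}+e_{2})^{2}-e_{1}e_{2}$, then $e_{1}^{2}e_{2}^{2}$, and finally $-e_{1}^{3}e_{2}^{3}$. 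Inserting $S_{0}(-A)=1$, $S_{1}(-A)=-(a_{1}+a_{2})$, and $S_{2}(-A)=a_{1}a_{2}$ (the same identifications already used in the proofs of Theorems~1--3) collapses the numerator to
\[
(e_{1}+e_{2})^{2}-e_{1}e_{2}-e_{1}^{2}e_{2}^{2}(a_{1}+a_{2})z-a_{1}a_{2}\,e_{1}^{3}e_{2}^{3}z^{2},
\]
matching the numerator on the RHS of (2.5).

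There is essentially no serious obstacle here: the argument is structurally identical to that of Theorem~3, with $\delta_{e_{1}e_{2}}^{2}$ replaced by $\delta_{e_{1}e_{2}}^{3}$, and the only care required is the bookkeeping of signs in the three terms $\bigl(e_{1}^{3}e_{2}^{2n}-e_{2}^{3}e_{1}^{2n}\bigr)/(e_{1}-e_{2})$ for $n=0,1,2$. In particular, the $n=2$ term produces a sign flip because $e_{2}-e_{1}=-(e_{1}-e_{2})$, and this is precisely what delivers the minus sign in the coefficient of $z^{2}$. Once these finite-order coefficients are pinned down, equating the two expressions for $\delta_{e_{1}e_{2}}^{3}F(e_{1}z)$ completes the proof.
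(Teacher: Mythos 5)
Your proposal is correct and follows exactly the route the paper intends: the paper's own proof of this theorem is just the one-line remark that one repeats the argument of Theorem 3 with $\delta_{e_{1}e_{2}}^{3}$ in place of $\delta_{e_{1}e_{2}}^{2}$, and your computation fills in precisely those details, with the three quotients $(e_{1}^{3}-e_{2}^{3})/(e_{1}-e_{2})=(e_{1}+e_{2})^{2}-e_{1}e_{2}$, $e_{1}^{2}e_{2}^{2}$, and $-e_{1}^{3}e_{2}^{3}$ all evaluated correctly and the identifications $S_{0}(-A)=1$, $S_{1}(-A)=-(a_{1}+a_{2})$, $S_{2}(-A)=a_{1}a_{2}$ matching those used in Theorems 1--3.
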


\begin{proof}
The proof is similar to the proof of Theorem 3, but now using the operator $%
\delta _{e_{1}e_{2}}^{3}$ to the series $\dsum\limits_{n=0}^{\infty
}S_{n}\left( A\right) e_{1}^{2n}z^{n}=\frac{1}{\dsum\limits_{n=0}^{\infty
}S_{n}\left( -A\right) e_{1}^{2n}z^{n}}.$
\end{proof}

Note that, based on the relationship (2.5), we get:%
\begin{equation}
\sum\limits_{n=0}^{\infty }S_{n-1}\left( A\right) S_{2n}\left( E\right)
z^{n}=\frac{\left( \left( e_{1}+e_{2}\right) ^{2}-e_{1}e_{2}\right)
z-e_{1}^{2}e_{2}^{2}\left( a_{1}+a_{2}\right)
z^{2}-a_{1}a_{2}e_{1}^{3}e_{2}^{3}z^{3}}{\left( \dsum\limits_{n=0}^{\infty
}S_{n}\left( -A\right) e_{1}^{2n}z^{n}\right) \left(
\dsum\limits_{n=0}^{\infty }S_{n}\left( -A\right) e_{2}^{2n}z^{n}\right) }. 
\tag{2.6}
\end{equation}

\begin{theorem}
Given two alphabets $A=\left\{ a_{1},a_{2}\right\} $ and $E=\left\{
e_{1},e_{2}\right\} ,$ we have:%
\begin{equation}
\sum\limits_{n=0}^{\infty }S_{n}\left( A\right) S_{2n+3}\left( E\right)
z^{n}=\frac{\left( e_{1}+e_{2}\right) \left( \left( e_{1}+e_{2}\right)
^{2}-2e_{1}e_{2}\right) -e_{1}^{2}e_{2}^{2}\left( a_{1}+a_{2}\right) \left(
e_{1}+e_{2}\right) z}{\left( \dsum\limits_{n=0}^{\infty }S_{n}\left(
-A\right) e_{1}^{2n}z^{n}\right) \left( \dsum\limits_{n=0}^{\infty
}S_{n}\left( -A\right) e_{2}^{2n}z^{n}\right) }.  \tag{2.7}
\end{equation}
\end{theorem}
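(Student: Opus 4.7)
The plan is to mirror the proofs of Theorems 1--4 and apply the symmetrizing operator $\delta_{e_1 e_2}^4$ (the case $k = 4$ in Definition 8) to the two equivalent representations of the same series, namely $f(e_1 z) = \sum_{n \geq 0} S_n(A) e_1^{2n} z^n = \bigl(\sum_{n \geq 0} S_n(-A) e_1^{2n} z^n\bigr)^{-1}$. The equality of these two sides is the standard reciprocal identity $\sum_n S_n(A) w^n \cdot \sum_n S_n(-A) w^n = 1$ evaluated at $w = e_1^2 z$, and it is the common engine already exploited in Theorems 1--4.

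First I compute the left-hand side: applying $\delta_{e_1 e_2}^4$ to $\sum_{n \geq 0} S_n(A) e_1^{2n} z^n$ gives $\sum_{n \geq 0} S_n(A) \tfrac{e_1^{2n+4} - e_2^{2n+4}}{e_1 - e_2} z^n$, and by Definition 7 the quotient $\tfrac{e_1^{2n+4} - e_2^{2n+4}}{e_1 - e_2}$ is exactly $S_{2n+3}(E)$, so we recover $\sum_{n \geq 0} S_n(A) S_{2n+3}(E) z^n$, the left-hand side of (2.7).

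For the right-hand side, I apply $\delta_{e_1 e_2}^4$ to the reciprocal form, combine the two resulting fractions over the common denominator $\bigl(\sum_{n \geq 0} S_n(-A) e_1^{2n} z^n\bigr)\bigl(\sum_{n \geq 0} S_n(-A) e_2^{2n} z^n\bigr)$ and factor $e_1 - e_2$ out of the numerator. This produces the finite series $\sum_{n \geq 0} S_n(-A) \tfrac{e_1^4 e_2^{2n} - e_2^4 e_1^{2n}}{e_1 - e_2} z^n$. Since $\lvert A \rvert = 2$, only $n = 0, 1, 2$ can contribute, and the decisive cancellation is that the $n = 2$ term vanishes identically because $e_1^4 e_2^4 - e_2^4 e_1^4 = 0$; this is precisely why the numerator in (2.7) is linear rather than quadratic in $z$.

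It remains to evaluate the two surviving terms. Using $S_0(-A) = 1$ together with $\tfrac{e_1^4 - e_2^4}{e_1 - e_2} = (e_1 + e_2)\bigl((e_1+e_2)^2 - 2 e_1 e_2\bigr)$ produces the constant term, while $S_1(-A) = -(a_1 + a_2)$ together with $\tfrac{e_1^4 e_2^2 - e_2^4 e_1^2}{e_1 - e_2} = e_1^2 e_2^2 (e_1 + e_2)$ produces the coefficient of $z$, reproducing exactly the numerator displayed on the right-hand side of (2.7). The only real obstacle is the algebraic bookkeeping for these two polynomial identities; no new conceptual ingredient is required beyond the pattern already established in the preceding theorems.
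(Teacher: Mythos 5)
Your proof is correct and follows exactly the route the paper intends: the paper's own proof of this theorem is a one-line remark that one applies $\delta_{e_{1}e_{2}}^{4}$ to $\sum_{n\geq 0}S_{n}(A)e_{1}^{2n}z^{n}=\bigl(\sum_{n\geq 0}S_{n}(-A)e_{1}^{2n}z^{n}\bigr)^{-1}$ as in Theorem 3, and you have simply carried out those omitted computations, including the key observation that $S_{n}(-A)=0$ for $n\geq 3$ and that the $n=2$ term vanishes. No gaps; the algebraic identities you invoke for the $n=0$ and $n=1$ terms are verified correctly.
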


\begin{proof}
The proof is similar to the proof of Theorem 3, but now using the operator $%
\delta _{e_{1}e_{2}}^{4}$ to the series $\dsum\limits_{n=0}^{\infty
}S_{n}\left( A\right) e_{1}^{2n}z^{n}=\frac{1}{\dsum\limits_{n=0}^{\infty
}S_{n}\left( -A\right) e_{1}^{2n}z^{n}}.$
\end{proof}

From (2.7), we get:%
\begin{equation}
\sum\limits_{n=0}^{\infty }S_{n-1}\left( A\right) S_{2n+1}\left( E\right)
z^{n}=\frac{\left( e_{1}+e_{2}\right) \left( \left( e_{1}+e_{2}\right)
^{2}-2e_{1}e_{2}\right) z-e_{1}^{2}e_{2}^{2}\left( a_{1}+a_{2}\right) \left(
e_{1}+e_{2}\right) z^{2}}{\left( \dsum\limits_{n=0}^{\infty }S_{n}\left(
-A\right) e_{1}^{2n}z^{n}\right) \left( \dsum\limits_{n=0}^{\infty
}S_{n}\left( -A\right) e_{2}^{2n}z^{n}\right) }.  \tag{2.8}
\end{equation}

\section{\textbf{Generating functions of odd and even some }$\left( 
\boldsymbol{p,q}\right) $\textbf{-numbers}}

In this part, we now derive the new generating functions for odd and even
phrases\emph{\ }of\emph{\ }some special numbers with parameters $p$ and $q$.
By making use of the Theorems 1, 2 and 3 we investigate some special cases ($%
A=\left\{ 1,0\right\} $ and $E=\left\{ e_{1},-e_{2}\right\} $) as follows:

\begin{lemma}
Given an alphabet $E=\left\{ e_{1},-e_{2}\right\} ,$ then we have:%
\begin{equation}
\sum\limits_{n=0}^{\infty }S_{2n-1}\left( e_{1}+\left[ -e_{2}\right] \right)
z^{n}=\frac{\left( e_{1}-e_{2}\right) z}{1-\left( \left( e_{1}-e_{2}\right)
^{2}+2e_{1}e_{2}\right) z+e_{1}^{2}e_{2}^{2}z^{2}}.  \tag{3.1}
\end{equation}
\end{lemma}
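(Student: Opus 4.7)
The plan is to realize this lemma as a direct specialization of Theorem 1, formula (2.1), by choosing $A = \{1,0\}$ and replacing the alphabet $E = \{e_1, e_2\}$ appearing there by $\{e_1, -e_2\}$. Once the substitutions are made, every quantity on both sides collapses to exactly what the lemma claims, and the only work is bookkeeping.

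First, I would evaluate the left-hand side under $A=\{1,0\}$. By Definition 1, $S_n(A) = h_n(1,0) = 1$ for every $n \geq 0$, so $\sum_n S_n(A) S_{2n-1}(E') z^n$ with $E' = \{e_1, -e_2\}$ reduces to $\sum_n S_{2n-1}(e_1 + [-e_2]) z^n$, which is the left-hand side of (3.1). Next, I would compute the scalar coefficients in the numerator of (2.1): since $a_1 = 1, a_2 = 0$ we have $a_1 + a_2 = 1$ and $a_1 a_2 = 0$, so the entire $z^2$ term in (2.1) vanishes, leaving the numerator equal to $(e_1+e_2)z$; replacing $e_2$ by $-e_2$ then yields $(e_1 - e_2) z$.

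Then I would evaluate the denominator of (2.1). From Definition 4 applied to $A = \{1,0\}$, we have $\prod_{a\in A}(1 - az) = 1 - z$, so $S_0(-A) = 1$, $S_1(-A) = -1$, and $S_n(-A) = 0$ for $n \geq 2$. Hence
\begin{equation*}
\sum_{n=0}^{\infty} S_n(-A) e_1^{2n} z^n = 1 - e_1^2 z, \qquad \sum_{n=0}^{\infty} S_n(-A) e_2^{2n} z^n = 1 - e_2^2 z,
\end{equation*}
and after the substitution $e_2 \to -e_2$ the second factor becomes $1 - e_2^2 z$ (unchanged, since only even powers of $e_2$ appear). Their product is $(1 - e_1^2 z)(1 - e_2^2 z) = 1 - (e_1^2 + e_2^2) z + e_1^2 e_2^2 z^2$.

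Finally, I would rewrite $e_1^2 + e_2^2 = (e_1 - e_2)^2 + 2 e_1 e_2$ to put the denominator in the form $1 - \bigl((e_1-e_2)^2 + 2 e_1 e_2\bigr) z + e_1^2 e_2^2 z^2$ that appears in (3.1), and conclude. There is no real obstacle here; the only point requiring care is keeping track of the substitution $e_2 \to -e_2$ in Theorem 1, and verifying that even powers of $e_2$ (which dominate the denominator) are unaffected while odd expressions such as $e_1 + e_2$ flip sign to $e_1 - e_2$ in the numerator.
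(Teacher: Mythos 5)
Your proposal is correct and follows exactly the route the paper intends: Lemma 1 is presented there as the specialization of Theorem 1 (formula (2.1)) to $A=\left\{ 1,0\right\} $ and $E=\left\{ e_{1},-e_{2}\right\} $, which is precisely what you carry out. Your bookkeeping of $S_{n}\left( -A\right) $, the vanishing of the $z^{2}$ term, and the identity $e_{1}^{2}+e_{2}^{2}=\left( e_{1}-e_{2}\right) ^{2}+2e_{1}e_{2}$ is accurate, so nothing further is needed.
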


\begin{lemma}
Given an alphabet $E=\left\{ e_{1},-e_{2}\right\} ,$ then we have:%
\begin{equation}
\sum\limits_{n=0}^{\infty }S_{2n}\left( e_{1}+\left[ -e_{2}\right] \right)
z^{n}=\frac{1-e_{1}e_{2}z}{1-\left( \left( e_{1}-e_{2}\right)
^{2}+2e_{1}e_{2}\right) z+e_{1}^{2}e_{2}^{2}z^{2}}.  \tag{3.2}
\end{equation}
\end{lemma}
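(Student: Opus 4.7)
The plan is to obtain this identity as a direct specialization of Theorem 2 with $A=\{1,0\}$ (so $a_1=1$, $a_2=0$), and with the second letter of the generic alphabet $E$ of Theorem 2 replaced by $-e_2$ so that the working alphabet becomes $\{e_1,-e_2\}$, exactly as in the hypothesis.

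First I would compute the auxiliary symmetric functions of the chosen $A$. From the defining generating function (1.10) applied to $A=\{1,0\}$ with empty $E$, one has $\sum_{n\ge 0}S_n(A)z^n = 1/(1-z) = \sum_{n\ge 0}z^n$, hence $S_n(A)=1$ for every $n\ge 0$. Dually, taking $A=\emptyset$ and $E=\{1,0\}$ in (1.10) gives $\sum_{n\ge 0}S_n(-A)z^n = 1-z$, so $S_0(-A)=1$, $S_1(-A)=-1$, and $S_n(-A)=0$ for $n\ge 2$. This immediately reduces each of the two denominator series in (2.2) to the polynomial $1-e_i^{2}z$ (the sign of $e_2$ is irrelevant because only even powers appear).

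Next I would substitute these data together with $e_2\mapsto -e_2$ into (2.2). The left-hand side becomes $\sum_{n\ge 0}S_{2n}(e_1+[-e_2])z^n$ since $S_n(A)=1$. In the numerator, $a_1+a_2=1$ and $a_1a_2=0$ kill the $z^2$-term, and the product $e_1e_2$ flips sign under the substitution, yielding $1-e_1e_2 z$. The denominator is $(1-e_1^{2}z)(1-e_2^{2}z)$.

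Finally, expanding the denominator and using the identity $e_1^{2}+e_2^{2}=(e_1-e_2)^{2}+2e_1e_2$ rewrites it as $1-\bigl((e_1-e_2)^{2}+2e_1e_2\bigr)z+e_1^{2}e_2^{2}z^{2}$, matching (3.2) exactly. There is no real obstacle here: the only thing requiring care is tracking the substitution $e_2\to -e_2$ consistently, noting that it leaves even powers of $e_2$ unchanged but reverses the signs of the symmetric combinations $e_1+e_2$ and $e_1e_2$ that appear in the numerator of (2.2).
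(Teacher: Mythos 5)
Your proposal is correct and is exactly the route the paper intends: the text preceding the three lemmas states that they are obtained from Theorems 1, 2 and 3 by specializing $A=\{1,0\}$ and $E=\{e_{1},-e_{2}\}$, which is precisely your substitution $a_{1}=1$, $a_{2}=0$, $e_{2}\mapsto -e_{2}$ in (2.2). Your computation of $S_{n}(A)=1$, $S_{n}(-A)$ vanishing for $n\geq 2$, and the resulting numerator $1-e_{1}e_{2}z$ and denominator $(1-e_{1}^{2}z)(1-e_{2}^{2}z)=1-\left(\left(e_{1}-e_{2}\right)^{2}+2e_{1}e_{2}\right)z+e_{1}^{2}e_{2}^{2}z^{2}$ is accurate throughout.
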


\begin{lemma}
Given an alphabet $E=\left\{ e_{1},-e_{2}\right\} ,$ then we have:%
\begin{equation}
\sum\limits_{n=0}^{\infty }S_{2n+1}\left( e_{1}+\left[ -e_{2}\right] \right)
z^{n}=\frac{e_{1}-e_{2}}{1-\left( \left( e_{1}-e_{2}\right)
^{2}+2e_{1}e_{2}\right) z+e_{1}^{2}e_{2}^{2}z^{2}}.  \tag{3.3}
\end{equation}
\end{lemma}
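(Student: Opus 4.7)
The plan is to derive Lemma 3 as a direct specialization of Theorem 3, obtained by taking $A = \{1,0\}$, i.e.\ $a_1 = 1$ and $a_2 = 0$, and then replacing the alphabet $\{e_1, e_2\}$ by $\{e_1, -e_2\}$. The passage just before Lemma 1 explicitly announces exactly this substitution, so the task is purely mechanical: carry it out for the odd-index generating function (2.3).

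First, I would record the inputs. From Definition 6 with $a_1 = 1$ and $a_2 = 0$, we get $S_n(A) = \frac{1^{n+1} - 0^{n+1}}{1 - 0} = 1$ for every $n \ge 0$, so the left-hand side of (2.3) collapses to $\sum_{n \ge 0} S_{2n+1}(E)\, z^n$ as required. From the generating series (1.10),
$$\sum_{n \ge 0} S_n(-A)\, z^n \;=\; (1-z)(1 - 0 \cdot z) \;=\; 1 - z,$$
hence $S_0(-A) = 1$, $S_1(-A) = -1$, and $S_n(-A) = 0$ for $n \ge 2$. Consequently each denominator series in (2.3) truncates to $\sum_{n} S_n(-A)\, e_i^{2n} z^n = 1 - e_i^2 z$, and in the numerator the prefactor $a_1 a_2 = 0$ kills the second term, leaving only $e_1 + e_2$.

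Next, I would perform the sign flip $e_2 \mapsto -e_2$ that sends the alphabet $\{e_1,e_2\}$ to $\{e_1,-e_2\}$. All even powers $e_i^{2n}$ are invariant, so the denominator factors remain $(1 - e_1^2 z)$ and $(1 - e_2^2 z)$, and their product equals $1 - (e_1^2 + e_2^2)\, z + e_1^2 e_2^2\, z^2$. The only algebraic identity I need is $e_1^2 + e_2^2 = (e_1 - e_2)^2 + 2 e_1 e_2$, which rewrites the denominator in the exact form appearing in (3.3). Under the same substitution the surviving numerator $e_1 + e_2$ becomes $e_1 - e_2$, matching the right-hand side of (3.3).

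The only thing to watch is the sign bookkeeping under $e_2 \mapsto -e_2$: the quantity $e_1 + e_2$ flips in one slot and $e_1 e_2$ changes sign, while the even powers $e_1^{2n}, e_2^{2n}$ do not. Because $a_1 a_2 = 0$ eliminates the numerator term that would otherwise involve $(e_1+e_2)\bigl((e_1+e_2)^2 - 2 e_1 e_2\bigr)$, no further manipulation is required, and there is no substantive obstacle; Lemma 3 falls out of Theorem 3 once the specialization is made.
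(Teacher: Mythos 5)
Your derivation is correct and is exactly the route the paper intends: Lemmas 1--3 are stated immediately after the announcement that one specializes Theorems 1, 2 and 3 to $A=\left\{ 1,0\right\} $ and $E=\left\{ e_{1},-e_{2}\right\} $, and your computation of $S_{n}\left( A\right) =1$, of $\sum_{n}S_{n}\left( -A\right) z^{n}=1-z$ (so each denominator factor truncates to $1-e_{i}^{2}z$), and of the sign bookkeeping under $e_{2}\mapsto -e_{2}$ together with $e_{1}^{2}+e_{2}^{2}=\left( e_{1}-e_{2}\right) ^{2}+2e_{1}e_{2}$ supplies precisely the omitted steps. The only slip is a label: the formula $S_{n}\left( e_{1}+e_{2}\right) =\frac{e_{1}^{n+1}-e_{2}^{n+1}}{e_{1}-e_{2}}$ you invoke is Definition 4 of the paper, not Definition 6.
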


This part consists of three cases.

\textbf{Case 1. }The substitution of $\left\{ 
\begin{array}{l}
e_{1}-e_{2}=p \\ 
e_{1}e_{2}=q%
\end{array}%
\right. $ in (3.1), (3.2) and (3.3), we obtain:%
\begin{equation}
\sum\limits_{n=0}^{\infty }S_{2n-1}\left( e_{1}+\left[ -e_{2}\right] \right)
z^{n}=\frac{pz}{1-\left( p^{2}+2q\right) z+q^{2}z^{2}},  \tag{3.4}
\end{equation}%
\begin{equation}
\sum\limits_{n=0}^{\infty }S_{2n}\left( e_{1}+\left[ -e_{2}\right] \right)
z^{n}=\frac{1-qz}{1-\left( p^{2}+2q\right) z+q^{2}z^{2}},  \tag{3.5}
\end{equation}%
\begin{equation}
\sum\limits_{n=0}^{\infty }S_{2n+1}\left( e_{1}+\left[ -e_{2}\right] \right)
z^{n}=\frac{p}{1-\left( p^{2}+2q\right) z+q^{2}z^{2}},  \tag{3.6}
\end{equation}%
respectively, and we have the following propositions and theorems.

\begin{proposition}
For $n\in 
\mathbb{N}
,$ the new generating function of even $\left( p,q\right) $-Fibonacci
numbers is given by:%
\begin{equation}
\sum\limits_{n=0}^{\infty }F_{p,q,2n}z^{n}=\frac{pz}{1-\left(
p^{2}+2q\right) z+q^{2}z^{2}}.  \tag{3.7}
\end{equation}
\end{proposition}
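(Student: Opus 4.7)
The plan is to obtain the result as a one-line corollary of equation (3.4) together with the symmetric-function representation of $(p,q)$-Fibonacci numbers from the introductory Proposition. That representation states $F_{p,q,n} = S_{n-1}(e_1 + [-e_2])$, where $e_{1,2}$ are the Binet roots $\frac{p \pm \sqrt{p^2+4q}}{2}$. Replacing $n$ by $2n$ gives $F_{p,q,2n} = S_{2n-1}(e_1 + [-e_2])$, so
\[
\sum_{n=0}^{\infty} F_{p,q,2n}\, z^n \;=\; \sum_{n=0}^{\infty} S_{2n-1}(e_1 + [-e_2])\, z^n.
\]
The right-hand side is exactly the series evaluated by (3.4) under the Case~1 substitution, producing the closed form $\frac{pz}{1-(p^2+2q)z+q^2 z^2}$.

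The only point requiring verification is that the specialization $e_1 - e_2 = p$ and $e_1 e_2 = q$ used in Case~1 is compatible with the Binet parameters attached to the $(p,q)$-Fibonacci sequence. The Binet roots in the preliminary Proposition satisfy $e_1 + e_2 = p$ and $e_1 e_2 = -q$, but the harmless relabeling $e_2 \mapsto -e_2$, which does not alter $S_{2n-1}(e_1 + [-e_2])$ because $(-e_2)^{2n} = e_2^{2n}$, converts these into the Case~1 relations $e_1 - e_2 = p$ and $e_1 e_2 = q$. Once this bookkeeping is in place, the proposition follows by immediate substitution; I do not anticipate any substantive obstacle, and this appears to be exactly the argument the authors have in mind.
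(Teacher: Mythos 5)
Your proposal is correct and coincides with the paper's own proof: the authors likewise invoke $F_{p,q,n}=S_{n-1}\left( e_{1}+\left[ -e_{2}\right] \right)$ from the cited Proposition, replace $n$ by $2n$, and read off the closed form from relation (3.4) under the Case 1 substitution. Your extra remark verifying that the Binet roots' relations $e_{1}+e_{2}=p$, $e_{1}e_{2}=-q$ match the Case 1 conventions after the sign relabeling is a careful piece of bookkeeping that the paper leaves implicit, but it does not change the argument.
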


\begin{proof}
By \cite{NABIHA}, we have%
\begin{equation*}
F_{p,q,n}=S_{n-1}\left( e_{1}+\left[ -e_{2}\right] \right) .
\end{equation*}

Writing $\left( 2n\right) $ instead of $\left( n\right) $, we get%
\begin{equation*}
F_{p,q,2n}=S_{2n-1}\left( e_{1}+\left[ -e_{2}\right] \right) .
\end{equation*}

Then, by relation (3.4), we obtain%
\begin{eqnarray*}
\sum\limits_{n=0}^{\infty }F_{p,q,2n}z^{n} &=&\sum\limits_{n=0}^{\infty
}S_{2n-1}\left( e_{1}+\left[ -e_{2}\right] \right) z^{n} \\
&=&\frac{pz}{1-\left( p^{2}+2q\right) z+q^{2}z^{2}}.
\end{eqnarray*}

Hence, we obtain the desired result.
\end{proof}

\begin{proposition}
For $n\in 
\mathbb{N}
,$ the new generating function of odd $\left( p,q\right) $-Fibonacci numbers
is given by:%
\begin{equation}
\sum\limits_{n=0}^{\infty }F_{p,q,2n+1}z^{n}=\frac{1-qz}{1-\left(
p^{2}+2q\right) z+q^{2}z^{2}}.  \tag{3.8}
\end{equation}
\end{proposition}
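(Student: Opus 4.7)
The plan is to mirror the proof of the preceding proposition exactly, with one index shift and a different lemma called in. The anchor is the symmetric-function identity $F_{p,q,n}=S_{n-1}(e_1+[-e_2])$ of \cite{NABIHA}, which, together with the Case~1 specialisation $e_1-e_2=p$ and $e_1e_2=q$ (so that $e_1$ and $-e_2$ are the roots of the $(p,q)$-Fibonacci characteristic polynomial), translates every statement about $(p,q)$-Fibonacci numbers into one about the symmetric functions $S_k(e_1+[-e_2])$.

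First I would substitute $n\mapsto 2n+1$ in that identity to obtain
\[
F_{p,q,2n+1}=S_{(2n+1)-1}(e_1+[-e_2])=S_{2n}(e_1+[-e_2]).
\]
Multiplying by $z^n$ and summing over $n\ge 0$ then yields
\[
\sum_{n=0}^{\infty}F_{p,q,2n+1}\,z^n=\sum_{n=0}^{\infty}S_{2n}(e_1+[-e_2])\,z^n,
\]
which is precisely the left-hand side of formula (3.5). Reading off its right-hand side delivers the required expression $\dfrac{1-qz}{1-(p^2+2q)z+q^2z^2}$, closing the argument.

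All the substantive work has therefore already been carried out in Lemma~2 and its specialisation (3.5): Lemma~2 is Theorem~2 applied with $A=\{1,0\}$ and $E=\{e_1,-e_2\}$, and (3.5) is the further substitution $e_1-e_2=p$, $e_1e_2=q$. There is no genuine obstacle here. The only point requiring attention is the index bookkeeping, since odd indices $2n+1$ correspond to $S_{2n}$ rather than to $S_{2n-1}$ or $S_{2n+1}$; this is what singles out Lemma~2 among the three lemmas of Case~1 as the correct tool (by contrast, the preceding proposition for the even-indexed generating function uses Lemma~1).
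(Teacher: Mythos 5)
Your argument is exactly the paper's own proof: substitute $n\mapsto 2n+1$ in the identity $F_{p,q,n}=S_{n-1}(e_{1}+[-e_{2}])$ to get $F_{p,q,2n+1}=S_{2n}(e_{1}+[-e_{2}])$, then read off the generating function from relation (3.5). The index bookkeeping you flag is handled the same way there, so there is nothing to add.
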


\begin{proof}
By referred to \cite{NABIHA}, we have%
\begin{equation*}
F_{p,q,n}=S_{n-1}\left( e_{1}+\left[ -e_{2}\right] \right) .
\end{equation*}%
Substituting $n$ by $\left( 2n+1\right) $, we obtain%
\begin{equation*}
F_{p,q,2n+1}=S_{2n}\left( e_{1}+\left[ -e_{2}\right] \right) .
\end{equation*}

From the relation (3.5), we can write 
\begin{eqnarray*}
\sum\limits_{n=0}^{\infty }F_{p,q,2n+1}z^{n} &=&\sum\limits_{n=0}^{\infty
}S_{2n}\left( e_{1}+\left[ -e_{2}\right] \right) z^{n} \\
&=&\frac{1-qz}{1-\left( p^{2}+2q\right) z+q^{2}z^{2}}.
\end{eqnarray*}

As required.
\end{proof}

\begin{theorem}
For $n\in 
\mathbb{N}
,$ the new generating function of even $\left( p,q\right) $-Lucas numbers is
given by:%
\begin{equation}
\sum\limits_{n=0}^{\infty }L_{p,q,2n}z^{n}=\frac{2-\left( p^{2}+2q\right) z}{%
1-\left( p^{2}+2q\right) z+q^{2}z^{2}}.  \tag{3.9}
\end{equation}
\end{theorem}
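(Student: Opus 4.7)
The plan is to reduce the statement to the two generating-function identities already established in (3.4) and (3.5) via the symmetric-function representation of $(p,q)$-Lucas numbers recalled in Proposition~1.

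First, I invoke Proposition~1, which gives
\[
L_{p,q,n} \;=\; 2\,S_{n}\!\left(e_{1}+[-e_{2}]\right) \;-\; p\,S_{n-1}\!\left(e_{1}+[-e_{2}]\right),
\qquad e_{1,2} \;=\; \frac{p\pm\sqrt{p^{2}+4q}}{2},
\]
and specialise the index $n \mapsto 2n$ to obtain
\[
L_{p,q,2n} \;=\; 2\,S_{2n}\!\left(e_{1}+[-e_{2}]\right) \;-\; p\,S_{2n-1}\!\left(e_{1}+[-e_{2}]\right).
\]
Multiplying by $z^{n}$ and summing over $n\ge 0$, linearity yields
\[
\sum_{n=0}^{\infty} L_{p,q,2n}\,z^{n}
\;=\; 2\sum_{n=0}^{\infty} S_{2n}\!\left(e_{1}+[-e_{2}]\right)z^{n}
\;-\; p\sum_{n=0}^{\infty} S_{2n-1}\!\left(e_{1}+[-e_{2}]\right)z^{n}.
\]

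Next, I substitute the two generating functions (3.5) and (3.4), which are the specialisations of Lemmas~2 and~1 under the identification $e_{1}-e_{2}=p$, $e_{1}e_{2}=q$ (the identification that corresponds exactly to the Binet-type data for $L_{p,q,n}$). This gives the common-denominator expression
\[
\sum_{n=0}^{\infty} L_{p,q,2n}\,z^{n}
\;=\; \frac{2(1-qz) \;-\; p\cdot pz}{1-(p^{2}+2q)z+q^{2}z^{2}}.
\]
Simplifying the numerator, $2-2qz-p^{2}z = 2-(p^{2}+2q)z$, delivers exactly formula (3.9), completing the proof.

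There is no real obstacle here; the only point requiring care is keeping the bookkeeping between the alphabet notation $\{e_{1},-e_{2}\}$ and the parameter substitution $e_{1}-e_{2}=p,\, e_{1}e_{2}=q$ consistent across the two invocations of the Lemmas, so that both terms share the denominator $1-(p^{2}+2q)z+q^{2}z^{2}$ and can be added directly. Once the two lemmas are plugged in, the result follows from a single-line algebraic simplification.
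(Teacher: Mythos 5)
Your proof is correct and follows essentially the same route as the paper: both invoke the representation $L_{p,q,n}=2S_{n}(e_{1}+[-e_{2}])-pS_{n-1}(e_{1}+[-e_{2}])$ from Proposition 1, specialise $n\mapsto 2n$, and combine the generating functions (3.5) and (3.4) with coefficients $2$ and $-p$ to simplify the numerator to $2-(p^{2}+2q)z$. No discrepancies to report.
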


\begin{proof}
We have%
\begin{equation*}
L_{p,q,n}=2S_{n}\left( e_{1}+\left[ -e_{2}\right] \right) -pS_{n-1}\left(
e_{1}+\left[ -e_{2}\right] \right) ,\text{ (see \cite{NABIHA}).}
\end{equation*}%
By setting $n=2n,$ we get%
\begin{equation*}
L_{p,q,2n}=2S_{2n}\left( e_{1}+\left[ -e_{2}\right] \right) -pS_{2n-1}\left(
e_{1}+\left[ -e_{2}\right] \right) .
\end{equation*}%
Then%
\begin{eqnarray*}
\sum\limits_{n=0}^{\infty }L_{p,q,2n}z^{n} &=&\sum\limits_{n=0}^{\infty
}\left( 2S_{2n}\left( e_{1}+\left[ -e_{2}\right] \right) -pS_{2n-1}\left(
e_{1}+\left[ -e_{2}\right] \right) \right) z^{n} \\
&=&2\sum\limits_{n=0}^{\infty }S_{2n}\left( e_{1}+\left[ -e_{2}\right]
\right) z^{n}-p\sum\limits_{n=0}^{\infty }S_{2n-1}\left( e_{1}+\left[ -e_{2}%
\right] \right) z^{n}.
\end{eqnarray*}

Multiplying the equation (3.5) by $\left( 2\right) $ and adding it to the
equation obtained by (3.4) multiplying by $(-p)$, then we obtain the
following equality%
\begin{eqnarray*}
\sum\limits_{n=0}^{\infty }L_{p,q,2n}z^{n} &=&\frac{2\left( 1-qz\right) }{%
1-\left( p^{2}+2q\right) z+q^{2}z^{2}}-\frac{p^{2}z}{1-\left(
p^{2}+2q\right) z+q^{2}z^{2}} \\
&=&\frac{2-\left( p^{2}+2q\right) z}{1-\left( p^{2}+2q\right) z+q^{2}z^{2}}.
\end{eqnarray*}

Which completes the proof.
\end{proof}

\begin{theorem}
For $n\in 
\mathbb{N}
,$ the new generating function of odd $\left( p,q\right) $-Lucas numbers is
given by:%
\begin{equation}
\sum\limits_{n=0}^{\infty }L_{p,q,2n+1}z^{n}=\frac{p+pqz}{1-\left(
p^{2}+2q\right) z+q^{2}z^{2}}.  \tag{3.10}
\end{equation}
\end{theorem}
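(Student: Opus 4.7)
The plan is to mimic the proof of the preceding theorem on even $(p,q)$-Lucas numbers, but with the index substitution $n \mapsto 2n+1$ instead of $n \mapsto 2n$. Starting from the symmetric-function representation recalled in Proposition 1,
\[
L_{p,q,n}=2S_{n}\left(e_{1}+[-e_{2}]\right)-pS_{n-1}\left(e_{1}+[-e_{2}]\right),
\]
I would replace $n$ by $2n+1$ to obtain
\[
L_{p,q,2n+1}=2S_{2n+1}\left(e_{1}+[-e_{2}]\right)-pS_{2n}\left(e_{1}+[-e_{2}]\right).
\]

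Next, I would multiply both sides by $z^{n}$ and sum over $n\geq 0$, using linearity to decompose the generating function into the two pieces
\[
\sum_{n=0}^{\infty}L_{p,q,2n+1}\,z^{n}=2\sum_{n=0}^{\infty}S_{2n+1}\!\left(e_{1}+[-e_{2}]\right)z^{n}-p\sum_{n=0}^{\infty}S_{2n}\!\left(e_{1}+[-e_{2}]\right)z^{n}.
\]
Then I would substitute the closed forms supplied by the Case~1 identities (3.6) and (3.5), namely $\sum S_{2n+1}z^{n}=p/D(z)$ and $\sum S_{2n}z^{n}=(1-qz)/D(z)$, where $D(z)=1-(p^{2}+2q)z+q^{2}z^{2}$ is the common denominator produced under the substitution $e_{1}-e_{2}=p$, $e_{1}e_{2}=q$.

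The final step is the algebraic combination over the common denominator $D(z)$: the numerator becomes $2p-p(1-qz)=p+pqz$, which is precisely the numerator appearing in (3.10). This is purely mechanical, so I do not anticipate a genuine obstacle; the only mild care needed is keeping track of the $-p$ coefficient from the Lucas representation so that the $-p\cdot 1$ contribution cancels exactly one of the two copies of $p$ coming from $2\cdot p$, leaving $p+pqz$ rather than any spurious cross terms. Since both Lemmas (3.4)--(3.6) and the Case~1 specializations have already been established in the preceding section, the argument reduces to invoking them and simplifying, exactly paralleling the proof of the even $(p,q)$-Lucas theorem that immediately precedes the statement.
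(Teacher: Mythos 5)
Your proposal is correct and follows essentially the same route as the paper: substitute $n\mapsto 2n+1$ in the representation $L_{p,q,n}=2S_{n}(e_{1}+[-e_{2}])-pS_{n-1}(e_{1}+[-e_{2}])$, then combine $2\times(3.6)$ with $(-p)\times(3.5)$ over the common denominator. Your numerator computation $2p-p(1-qz)=p+pqz$ is the correct one; the paper's displayed intermediate step writes $p^{2}(1-qz)$ instead of $p(1-qz)$, which is evidently a typo since only the former... rather, only your version yields the stated result.
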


\begin{proof}
Recall that, we have $L_{p,q,n}=2S_{n}\left( e_{1}+\left[ -e_{2}\right]
\right) -pS_{n-1}\left( e_{1}+\left[ -e_{2}\right] \right) ,$ (see \cite%
{NABIHA}).

By putting $n=2n+1,$ we get%
\begin{equation*}
L_{p,q,2n+1}=2S_{2n+1}\left( e_{1}+\left[ -e_{2}\right] \right)
-pS_{2n}\left( e_{1}+\left[ -e_{2}\right] \right) \text{.}
\end{equation*}%
Then%
\begin{eqnarray*}
\sum\limits_{n=0}^{\infty }L_{p,q,2n+1}z^{n} &=&\sum\limits_{n=0}^{\infty
}\left( 2S_{2n+1}\left( e_{1}+\left[ -e_{2}\right] \right) -pS_{2n}\left(
e_{1}+\left[ -e_{2}\right] \right) \right) z^{n} \\
&=&2\sum\limits_{n=0}^{\infty }S_{2n+1}\left( e_{1}+\left[ -e_{2}\right]
\right) z^{n}-p\sum\limits_{n=0}^{\infty }S_{2n}\left( e_{1}+\left[ -e_{2}%
\right] \right) z^{n}.
\end{eqnarray*}

Multiplying the equation (3.6) by $\left( 2\right) $ and adding it to the
equation obtained by (3.5) multiplying by $(-p)$, then we obtain the
following equality%
\begin{eqnarray*}
\sum\limits_{n=0}^{\infty }L_{p,q,2n+1}z^{n} &=&\frac{2p}{1-\left(
p^{2}+2q\right) z+q^{2}z^{2}}-\frac{p^{2}\left( 1-qz\right) }{1-\left(
p^{2}+2q\right) z+q^{2}z^{2}} \\
&=&\frac{p+pqz}{1-\left( p^{2}+2q\right) z+q^{2}z^{2}}.
\end{eqnarray*}

Thus, this completes the proof.
\end{proof}

\begin{itemize}
\item Taking $p=k$ and $q=1$ in the Eqs. (3.7)-(3.10),\ we get the new
generating functions of even and odd $k$-Fibonacci and $k$-Lucas numbers.
The calculation and results are listed in the Tab. 2.%
\begin{equation*}
\begin{tabular}{|c|c|}
\hline
C$\text{oefficient of }z^{n}$ & Generating function \\ \hline
$F_{k,2n}$ & $\frac{kz}{1-\left( k^{2}+2\right) z+z^{2}}$ \\ \hline
$F_{k,2n+1}$ & $\frac{1-z}{1-\left( k^{2}+2\right) z+z^{2}}$ \\ \hline
$L_{k,2n}$ & $\frac{2-\left( k^{2}+2\right) z}{1-\left( k^{2}+2\right)
z+z^{2}}$ \\ \hline
$L_{k,2n+1}$ & $\frac{k+kz}{1-\left( k^{2}+2\right) z+z^{2}}$ \\ \hline
\end{tabular}%
\end{equation*}
\end{itemize}

\begin{center}
\textbf{Table 2.} New generating functions for even and odd $k$-Fibonacci
and $k$-Lucas numbers.
\end{center}

\begin{remark}
If we take $k=1$ in the Tab. 2,\ we get the generating functions of even and
odd Fibonacci and Lucas numbers given in the article of Mezo \cite{MEZO} as
follows:%
\begin{equation*}
\sum\limits_{n=0}^{\infty }F_{2n}z^{n}=\frac{z}{1-3z+z^{2}},\text{ }%
\sum\limits_{n=0}^{\infty }F_{2n+1}z^{n}=\frac{1-z}{1-3z+z^{2}},\text{ }%
\sum\limits_{n=0}^{\infty }L_{2n}z^{n}=\frac{2-3z}{1-3z+z^{2}}\text{ and }%
\sum\limits_{n=0}^{\infty }L_{2n+1}z^{n}=\frac{1+z}{1-3z+z^{2}}.
\end{equation*}
\end{remark}

\textbf{Case 2. }The substitution of $\left\{ 
\begin{array}{l}
e_{1}-e_{2}=2p \\ 
e_{1}e_{2}=q%
\end{array}%
\right. $ in (3.1), (3.2) and (3.3), we obtain:%
\begin{equation}
\sum\limits_{n=0}^{\infty }S_{2n-1}\left( e_{1}+\left[ -e_{2}\right] \right)
z^{n}=\frac{2pz}{1-2\left( 2p^{2}+q\right) z+q^{2}z^{2}},  \tag{3.11}
\end{equation}%
\begin{equation}
\sum\limits_{n=0}^{\infty }S_{2n}\left( e_{1}+\left[ -e_{2}\right] \right)
z^{n}=\frac{1-qz}{1-2\left( 2p^{2}+q\right) z+q^{2}z^{2}},  \tag{3.12}
\end{equation}%
\begin{equation}
\sum\limits_{n=0}^{\infty }S_{2n+1}\left( e_{1}+\left[ -e_{2}\right] \right)
z^{n}=\frac{2p}{1-2\left( 2p^{2}+q\right) z+q^{2}z^{2}},  \tag{3.13}
\end{equation}%
respectively, and we have the following propositions and theorems.

\begin{proposition}
For $n\in 
\mathbb{N}
,$ the new generating function of even $\left( p,q\right) $-Pell numbers is
given by:%
\begin{equation}
\sum\limits_{n=0}^{\infty }P_{p,q,2n}z^{n}=\frac{2pz}{1-2\left(
2p^{2}+q\right) z+q^{2}z^{2}}.  \tag{3.14}
\end{equation}
\end{proposition}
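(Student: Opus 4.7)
The target formula is the $(p,q)$-Pell analogue of Proposition 1 (equation (3.7)), and my plan is to follow the same three-line template verbatim, replacing Case 1 data by Case 2 data. The only substantive ingredient is Proposition 1 of the Preliminaries, which expresses
$$P_{p,q,n} = S_{n-1}\!\left(e_{1}+[-e_{2}]\right),$$
with $e_{1,2}=p\pm\sqrt{p^{2}+q}$. For these roots, $e_{1}-e_{2}=2\sqrt{p^{2}+q}$ and $e_{1}e_{2}=-q$; however, what governs the generating function in Lemma 1 / equation (3.1) is the pair $(e_{1}-e_{2},\,e_{1}e_{2})$ appearing in the alphabet $\{e_{1},-e_{2}\}$, and under the Case 2 re-parametrization $e_{1}-e_{2}=2p$, $e_{1}e_{2}=q$, the characteristic polynomial $x^{2}-(e_{1}-e_{2})x-e_{1}e_{2}=x^{2}-2px-q$ is exactly the recurrence polynomial of $\{P_{p,q,n}\}$, so the substitution is compatible.

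\textbf{Main steps.} First, I substitute $n\mapsto 2n$ in the identity from Proposition 1 to get $P_{p,q,2n}=S_{2n-1}(e_{1}+[-e_{2}])$. Second, I multiply by $z^{n}$ and sum over $n\geq 0$ to obtain
$$\sum_{n=0}^{\infty}P_{p,q,2n}\,z^{n}=\sum_{n=0}^{\infty}S_{2n-1}\!\left(e_{1}+[-e_{2}]\right)z^{n}.$$
Third, I apply equation (3.11), which is the Case 2 specialization of Lemma 1 under $e_{1}-e_{2}=2p$, $e_{1}e_{2}=q$, to identify the right-hand side with
$$\frac{2pz}{1-2(2p^{2}+q)\,z+q^{2}z^{2}},$$
which is exactly the claimed closed form.

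\textbf{Expected obstacle.} There is no real obstacle: the analytic content sits in Theorem 1 (where the divided difference operator $\partial_{e_{1}e_{2}}$ produces (2.1)) and in Lemma 1, both already proved. The Pell specialization is purely algebraic bookkeeping. The only point requiring a moment's care is ensuring that the reader sees why the substitution $e_{1}-e_{2}=2p$, $e_{1}e_{2}=q$ is the right one for $(p,q)$-Pell numbers; I would mention this briefly, exactly as in the introduction to Case 2, so that the reduction to (3.11) is transparent rather than opaque.
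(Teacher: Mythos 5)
Your proof is correct and follows the paper's own argument essentially verbatim: substitute $n\mapsto 2n$ in the identity $P_{p,q,n}=S_{n-1}\left(e_{1}+\left[-e_{2}\right]\right)$ from Proposition 1 of the preliminaries, sum against $z^{n}$, and invoke the Case 2 specialization (3.11) of Lemma 1. Your side remark reconciling the explicit roots $p\pm\sqrt{p^{2}+q}$ with the substitution $e_{1}-e_{2}=2p$, $e_{1}e_{2}=q$ (i.e.\ that the alphabet $\{e_{1},-e_{2}\}$ is what carries the Pell recurrence polynomial $x^{2}-2px-q$) is a clarification the paper leaves implicit, but it does not change the route.
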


\begin{proof}
Recall that, we have 
\begin{equation*}
P_{p,q,n}=S_{n-1}\left( e_{1}+\left[ -e_{2}\right] \right) ,\text{ (see \cite%
{NABIHA}).}
\end{equation*}

Writing $\left( 2n\right) $ instead of $\left( n\right) $, we obtain%
\begin{equation*}
P_{p,q,2n}=S_{2n-1}\left( e_{1}+\left[ -e_{2}\right] \right) .
\end{equation*}

Therefore, according to the relationship (3.11), we get 
\begin{eqnarray*}
\sum\limits_{n=0}^{\infty }P_{p,q,2n}z^{n} &=&\sum\limits_{n=0}^{\infty
}S_{2n-1}\left( e_{1}+\left[ -e_{2}\right] \right) z^{n} \\
&=&\frac{2pz}{1-2\left( 2p^{2}+q\right) z+q^{2}z^{2}}.
\end{eqnarray*}

Hence, we obtain the desired result.
\end{proof}

\begin{proposition}
For $n\in 
\mathbb{N}
,$ the new generating function of odd $\left( p,q\right) $-Pell numbers is
given by:%
\begin{equation}
\sum\limits_{n=0}^{\infty }P_{p,q,2n+1}z^{n}=\frac{1-qz}{1-2\left(
2p^{2}+q\right) z+q^{2}z^{2}}.  \tag{3.15}
\end{equation}
\end{proposition}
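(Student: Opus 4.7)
The plan is to mirror, verbatim in structure, the argument just used for Proposition concerning the odd $(p,q)$-Fibonacci numbers (equation (3.8)). The only change is which ``case'' of the parameter substitution we invoke: Case~1 yields Fibonacci/Lucas-type generating functions, whereas for Pell numbers we need Case~2, i.e.\ $e_{1}-e_{2}=2p$ and $e_{1}e_{2}=q$, together with the formula (3.12) for $\sum_{n\geq 0} S_{2n}(e_{1}+[-e_{2}])z^{n}$.

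First, I would recall from the symmetric-function representation (Proposition cited from \cite{NABIHA}) that
\[
P_{p,q,n}=S_{n-1}\bigl(e_{1}+[-e_{2}]\bigr),
\]
so that the index shift $n\mapsto 2n+1$ produces the identity
\[
P_{p,q,2n+1}=S_{2n}\bigl(e_{1}+[-e_{2}]\bigr).
\]
Multiplying by $z^{n}$ and summing over $n\geq 0$ converts the problem of computing the odd-indexed generating function into the problem of computing $\sum_{n\geq 0}S_{2n}(e_{1}+[-e_{2}])z^{n}$, which is precisely the quantity already evaluated in Lemma~2 in its raw form (3.2).

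Next, I would specialize (3.2) to the Pell parameterization. Under the substitution $e_{1}-e_{2}=2p$, $e_{1}e_{2}=q$ (Case~2), equation (3.2) collapses exactly to (3.12):
\[
\sum\limits_{n=0}^{\infty}S_{2n}\bigl(e_{1}+[-e_{2}]\bigr)z^{n}=\frac{1-qz}{1-2(2p^{2}+q)z+q^{2}z^{2}}.
\]
Combining this with the previous step yields the desired closed form
\[
\sum\limits_{n=0}^{\infty}P_{p,q,2n+1}z^{n}=\frac{1-qz}{1-2(2p^{2}+q)z+q^{2}z^{2}},
\]
completing the proof.

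There is essentially no obstacle: the whole argument is a two-line bookkeeping exercise, identical to the proof of (3.8) except for the change of parameters in the denominator and numerator (the $p^{2}+2q$ of the Fibonacci case is replaced by $2(2p^{2}+q)$, because $e_{1}-e_{2}$ is now $2p$ rather than $p$). The only point that deserves a sentence of care is the index shift: one should note that $S_{-1}(e_{1}+[-e_{2}])=0$ by Remark~2, so the $n=0$ term of the sum equals $P_{p,q,1}=1$, matching the constant term $1$ of the right-hand side. Everything else is direct substitution.
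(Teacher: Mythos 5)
Your proposal is correct and follows exactly the paper's own argument: invoke $P_{p,q,n}=S_{n-1}(e_{1}+[-e_{2}])$, shift the index to get $P_{p,q,2n+1}=S_{2n}(e_{1}+[-e_{2}])$, and apply the Case~2 specialization (3.12) of Lemma~2. The closing remark about the constant term is a harmless extra check (though the $S_{-1}=0$ convention is actually what matters for the even-index companion result, not this one).
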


\begin{proof}
By referred to \cite{NABIHA}, we have%
\begin{equation*}
P_{p,q,n}=S_{n-1}\left( e_{1}+\left[ -e_{2}\right] \right) .
\end{equation*}

Substituting $n$ by $\left( 2n+1\right) $, we obtain%
\begin{equation*}
P_{p,q,2n+1}=S_{2n}\left( e_{1}+\left[ -e_{2}\right] \right) .
\end{equation*}

From the relation (3.12), we can write 
\begin{eqnarray*}
\sum\limits_{n=0}^{\infty }P_{p,q,2n+1}z^{n} &=&\sum\limits_{n=0}^{\infty
}S_{2n}\left( e_{1}+\left[ -e_{2}\right] \right) z^{n} \\
&=&\frac{1-qz}{1-2\left( 2p^{2}+q\right) z+q^{2}z^{2}}.
\end{eqnarray*}

As required.
\end{proof}

\begin{theorem}
For $n\in 
\mathbb{N}
,$ the new generating function of even $\left( p,q\right) $-Pell Lucas
numbers is given by:%
\begin{equation}
\sum\limits_{n=0}^{\infty }Q_{p,q,2n}z^{n}=\frac{2-2\left( 2p^{2}+q\right) z%
}{1-2\left( 2p^{2}+q\right) z+q^{2}z^{2}}.  \tag{3.16}
\end{equation}
\end{theorem}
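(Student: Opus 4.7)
The plan is to mirror exactly the structure used in the proof of the even $(p,q)$-Lucas generating function (Theorem for $L_{p,q,2n}$), since the $(p,q)$-Pell Lucas numbers admit a symmetric-function representation of the same shape, only with the leading coefficient $p$ replaced by $2p$ and a different underlying alphabet via $e_{1,2}=p\pm\sqrt{p^{2}+q}$. Concretely, I would begin by recalling from the proposition in the preliminaries that
\begin{equation*}
Q_{p,q,n}=2S_{n}\!\left(e_{1}+[-e_{2}]\right)-2p\,S_{n-1}\!\left(e_{1}+[-e_{2}]\right),
\end{equation*}
and then substitute $n\mapsto 2n$ to obtain $Q_{p,q,2n}=2S_{2n}(e_{1}+[-e_{2}])-2p\,S_{2n-1}(e_{1}+[-e_{2}])$.

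Next, I would multiply this identity by $z^{n}$ and sum over $n\geq 0$, exchanging the sum termwise so that the generating function splits as
\begin{equation*}
\sum_{n=0}^{\infty}Q_{p,q,2n}z^{n}=2\sum_{n=0}^{\infty}S_{2n}(e_{1}+[-e_{2}])z^{n}-2p\sum_{n=0}^{\infty}S_{2n-1}(e_{1}+[-e_{2}])z^{n}.
\end{equation*}
At this point the two sums on the right are exactly the ones evaluated in Case~2, namely equations (3.12) and (3.11), under the substitutions $e_{1}-e_{2}=2p$ and $e_{1}e_{2}=q$ appropriate to the Pell-type alphabet.

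Substituting those closed forms, the identity becomes a combination of two rational functions sharing the common denominator $1-2(2p^{2}+q)z+q^{2}z^{2}$. The only work left is then a short algebraic simplification of the numerator: $2(1-qz)-2p\cdot 2pz=2-2qz-4p^{2}z=2-2(2p^{2}+q)z$, which immediately yields the claimed expression. The main (and really only) obstacle is making sure the substitution $e_{1}-e_{2}=2p$, $e_{1}e_{2}=q$ is the correct one for the Pell Lucas alphabet $e_{1,2}=p\pm\sqrt{p^{2}+q}$; once this is checked (and it is, since these values of $e_{1}$ and $e_{2}$ produce precisely $e_{1}-e_{2}=2\sqrt{p^{2}+q}$ or, after the sign flip implicit in the alphabet $\{e_{1},-e_{2}\}$, the required $2p$ and $q$), everything is a direct substitution in the style of the $L_{p,q,2n}$ proof.
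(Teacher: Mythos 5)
Your proposal is correct and follows essentially the same route as the paper: both use the representation $Q_{p,q,n}=2S_{n}(e_{1}+[-e_{2}])-2pS_{n-1}(e_{1}+[-e_{2}])$, substitute $n\mapsto 2n$, split the sum, and combine equations (3.12) and (3.11) with coefficients $2$ and $-2p$ to reach the stated numerator. The algebraic simplification you give matches the paper's computation exactly.
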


\begin{proof}
Once more, by \cite{NABIHA} we have%
\begin{equation*}
Q_{p,q,n}=2S_{n}\left( e_{1}+\left[ -e_{2}\right] \right) -2pS_{n-1}\left(
e_{1}+\left[ -e_{2}\right] \right) \text{.}
\end{equation*}%
By setting $n=2n,$ we get%
\begin{equation*}
Q_{p,q,2n}=2S_{2n}\left( e_{1}+\left[ -e_{2}\right] \right)
-2pS_{2n-1}\left( e_{1}+\left[ -e_{2}\right] \right) \text{.}
\end{equation*}%
Then%
\begin{eqnarray*}
\sum\limits_{n=0}^{\infty }Q_{p,q,2n}z^{n} &=&\sum\limits_{n=0}^{\infty
}\left( 2S_{2n}\left( e_{1}+\left[ -e_{2}\right] \right) -2pS_{2n-1}\left(
e_{1}+\left[ -e_{2}\right] \right) \right) z^{n} \\
&=&2\sum\limits_{n=0}^{\infty }S_{2n}\left( e_{1}+\left[ -e_{2}\right]
\right) z^{n}-2p\sum\limits_{n=0}^{\infty }S_{2n-1}\left( e_{1}+\left[ -e_{2}%
\right] \right) z^{n}.
\end{eqnarray*}

Multiplying the equation (3.12) by $\left( 2\right) $ and adding it to the
equation obtained by (3.11) multiplying by $(-2p)$, then we obtain the
following equality%
\begin{eqnarray*}
\sum\limits_{n=0}^{\infty }Q_{p,q,2n}z^{n} &=&\frac{2\left( 1-qz\right) }{%
1-2\left( 2p^{2}+q\right) z+q^{2}z^{2}}-\frac{4p^{2}z}{1-2\left(
2p^{2}+q\right) z+q^{2}z^{2}} \\
&=&\frac{2-2\left( 2p^{2}+q\right) z}{1-2\left( 2p^{2}+q\right) z+q^{2}z^{2}}%
.
\end{eqnarray*}%
Which completes the proof.
\end{proof}

\begin{theorem}
For $n\in 
\mathbb{N}
,$ the new generating function of odd $\left( p,q\right) $-Pell Lucas
numbers is given by:%
\begin{equation}
\sum\limits_{n=0}^{\infty }Q_{p,q,2n+1}z^{n}=\frac{2p+2pqz}{1-2\left(
2p^{2}+q\right) z+q^{2}z^{2}}.  \tag{3.17}
\end{equation}
\end{theorem}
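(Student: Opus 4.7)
The plan is to mirror exactly the structure used in the proof of the odd $(p,q)$-Lucas theorem (Theorem for $L_{p,q,2n+1}$), replacing the $(p,q)$-Lucas expansion by the $(p,q)$-Pell Lucas expansion from the Proposition on symmetric-function representations. Concretely, I would start from
$$Q_{p,q,n}=2S_{n}\!\left(e_{1}+[-e_{2}]\right)-2pS_{n-1}\!\left(e_{1}+[-e_{2}]\right),$$
specialize $n\mapsto 2n+1$ to obtain $Q_{p,q,2n+1}=2S_{2n+1}(e_{1}+[-e_{2}])-2pS_{2n}(e_{1}+[-e_{2}])$, multiply by $z^{n}$, and sum over $n\ge 0$ to split the generating function into the two series $\sum S_{2n+1}z^{n}$ and $\sum S_{2n}z^{n}$.

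The second step is to invoke the specialized identities of Case 2, namely (3.13) and (3.12), which are exactly the generating functions of $\sum S_{2n+1}(e_{1}+[-e_{2}])z^{n}$ and $\sum S_{2n}(e_{1}+[-e_{2}])z^{n}$ under the substitution $e_{1}-e_{2}=2p$, $e_{1}e_{2}=q$ suited to the $(p,q)$-Pell Lucas setting. Multiplying (3.13) by $2$ and (3.12) by $-2p$ and adding, the two fractions share the common denominator $1-2(2p^{2}+q)z+q^{2}z^{2}$, so combining yields
$$\sum_{n=0}^{\infty}Q_{p,q,2n+1}z^{n}=\frac{4p-2p(1-qz)}{1-2(2p^{2}+q)z+q^{2}z^{2}}=\frac{2p+2pqz}{1-2(2p^{2}+q)z+q^{2}z^{2}},$$
which is the claimed formula.

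There is no real obstacle here: the argument is entirely parallel to the proof of the odd $(p,q)$-Lucas theorem, and all the ingredients (the symmetric-function representation of $Q_{p,q,n}$ from the Proposition and the two generating identities (3.12)–(3.13) from Case 2) are already established in the excerpt. The only place to be careful is the bookkeeping of the coefficients $2$ and $-2p$ (as opposed to $2$ and $-p$ in the Lucas case), and verifying that the numerator simplifies to $2p+2pqz$ rather than $2p-2pqz$; a brief linear-combination check handles this cleanly.
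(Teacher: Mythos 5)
Your proposal matches the paper's own proof essentially verbatim: the paper also expands $Q_{p,q,2n+1}=2S_{2n+1}(e_{1}+[-e_{2}])-2pS_{2n}(e_{1}+[-e_{2}])$, applies (3.13) with coefficient $2$ and (3.12) with coefficient $-2p$, and simplifies $4p-2p(1-qz)=2p+2pqz$ over the common denominator. The argument and the arithmetic are both correct.
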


\begin{proof}
Recall that, we have $Q_{p,q,n}=2S_{n}\left( e_{1}+\left[ -e_{2}\right]
\right) -2pS_{n-1}\left( e_{1}+\left[ -e_{2}\right] \right) ,$ (see \cite%
{NABIHA}).

By putting $n=2n+1,$ we get%
\begin{equation*}
Q_{p,q,2n+1}=2S_{2n+1}\left( e_{1}+\left[ -e_{2}\right] \right)
-2pS_{2n}\left( e_{1}+\left[ -e_{2}\right] \right) .
\end{equation*}%
Then%
\begin{eqnarray*}
\sum\limits_{n=0}^{\infty }Q_{p,q,2n+1}z^{n} &=&\sum\limits_{n=0}^{\infty
}\left( 2S_{2n+1}\left( e_{1}+\left[ -e_{2}\right] \right) -2pS_{2n}\left(
e_{1}+\left[ -e_{2}\right] \right) \right) z^{n} \\
&=&2\sum\limits_{n=0}^{\infty }S_{2n+1}\left( e_{1}+\left[ -e_{2}\right]
\right) z^{n}-2p\sum\limits_{n=0}^{\infty }S_{2n}\left( e_{1}+\left[ -e_{2}%
\right] \right) z^{n}.
\end{eqnarray*}

Multiplying the equation (3.13) by $\left( 2\right) $ and adding it to the
equation obtained by (3.12) multiplying by $(-2p)$, then we obtain the
following equality%
\begin{eqnarray*}
\sum\limits_{n=0}^{\infty }Q_{p,q,2n+1}z^{n} &=&\frac{4p}{1-2\left(
2p^{2}+q\right) z+q^{2}z^{2}}-\frac{2p\left( 1-qz\right) }{1-2\left(
2p^{2}+q\right) z+q^{2}z^{2}} \\
&=&\frac{2p+2pqz}{1-2\left( 2p^{2}+q\right) z+q^{2}z^{2}}.
\end{eqnarray*}%
Thus, this completes the proof.
\end{proof}

\begin{itemize}
\item Taking $p=1$ and $q=k$ in the Eqs. (3.14)-(3.17),\ we get the new
generating functions of even and odd $k$-Pell and $k$-Pell Lucas numbers.
The calculation and results are listed in the Tab. 3.%
\begin{equation*}
\begin{tabular}{|c|c|}
\hline
C$\text{oefficient of }z^{n}$ & Generating function \\ \hline
$P_{k,2n}$ & $\frac{2z}{1-2\left( k+2\right) z+k^{2}z^{2}}$ \\ \hline
$P_{k,2n+1}$ & $\frac{1-kz}{1-2\left( k+2\right) z+k^{2}z^{2}}$ \\ \hline
$Q_{k,2n}$ & $\frac{2-2\left( k+2\right) z}{1-2\left( k+2\right) z+k^{2}z^{2}%
}$ \\ \hline
$Q_{k,2n+1}$ & $\frac{2+2kz}{1-2\left( k+2\right) z+k^{2}z^{2}}$ \\ \hline
\end{tabular}%
\end{equation*}
\end{itemize}

\begin{center}
\textbf{Table 3.} New generating functions for even and odd $k$-Pell and $k$%
-Pell Lucas numbers.
\end{center}

\begin{remark}
If we take $k=1$ in the Tab. 3,\ we get the generating functions of even and
odd Pell and Pell Lucas numbers given in the article of Mezo \cite{MEZO} as
follows:%
\begin{equation*}
\sum\limits_{n=0}^{\infty }P_{2n}z^{n}=\frac{2z}{1-6z+z^{2}},\text{ }%
\sum\limits_{n=0}^{\infty }P_{2n+1}z^{n}=\frac{1-z}{1-6z+z^{2}},\text{ }%
\sum\limits_{n=0}^{\infty }Q_{2n}z^{n}=\frac{2-6z}{1-6z+z^{2}}\text{ and }%
\sum\limits_{n=0}^{\infty }Q_{2n+1}z^{n}=\frac{2+2z}{1-6z+z^{2}}.
\end{equation*}
\end{remark}

\textbf{Case 3. }The substitution of $\left\{ 
\begin{array}{l}
e_{1}-e_{2}=p \\ 
e_{1}e_{2}=2q%
\end{array}%
\right. $ in (3.1), (3.2) and (3.3), we obtain:%
\begin{equation}
\sum\limits_{n=0}^{\infty }S_{2n-1}\left( e_{1}+\left[ -e_{2}\right] \right)
z^{n}=\frac{pz}{1-\left( p^{2}+4q\right) z+4q^{2}z^{2}},  \tag{3.18}
\end{equation}%
\begin{equation}
\sum\limits_{n=0}^{\infty }S_{2n}\left( e_{1}+\left[ -e_{2}\right] \right)
z^{n}=\frac{1-2qz}{1-\left( p^{2}+4q\right) z+4q^{2}z^{2}},  \tag{3.19}
\end{equation}%
\begin{equation}
\sum\limits_{n=0}^{\infty }S_{2n+1}\left( e_{1}+\left[ -e_{2}\right] \right)
z^{n}=\frac{p}{1-\left( p^{2}+4q\right) z+4q^{2}z^{2}},  \tag{3.20}
\end{equation}%
respectively, and we have the following propositions and theorems.

\begin{proposition}
For $n\in 
\mathbb{N}
,$ the new generating function of even $\left( p,q\right) $-Jacobsthal
numbers is given by:%
\begin{equation}
\sum\limits_{n=0}^{\infty }J_{p,q,2n}z^{n}=\frac{pz}{1-\left(
p^{2}+4q\right) z+4q^{2}z^{2}}.  \tag{3.21}
\end{equation}
\end{proposition}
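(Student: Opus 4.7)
The plan is to mirror exactly the two-step template used for the even $(p,q)$-Fibonacci and even $(p,q)$-Pell propositions, since the $(p,q)$-Jacobsthal numbers admit the same symmetric-function representation $J_{p,q,n}=S_{n-1}(e_{1}+[-e_{2}])$ (already recorded in the Proposition cited from \cite{NABIHA}). The only thing that changes is the specialization of the alphabet: for the Jacobsthal recurrence (1.6) the characteristic equation is $x^{2}-px-2q=0$, so the appropriate identification is $e_{1}-e_{2}=p$ and $e_{1}e_{2}=2q$, which is exactly the regime of Case 3. Therefore the desired generating function should drop out of formula (3.18) with no further work.

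First I would invoke $J_{p,q,n}=S_{n-1}(e_{1}+[-e_{2}])$ and substitute $n\mapsto 2n$ to obtain
\begin{equation*}
J_{p,q,2n}=S_{2n-1}(e_{1}+[-e_{2}]).
\end{equation*}
Multiplying both sides by $z^{n}$, summing over $n\geq 0$, and then appealing to the Case 3 identity (3.18) yields
\begin{equation*}
\sum_{n=0}^{\infty }J_{p,q,2n}z^{n}=\sum_{n=0}^{\infty }S_{2n-1}(e_{1}+[-e_{2}])z^{n}=\frac{pz}{1-(p^{2}+4q)z+4q^{2}z^{2}},
\end{equation*}
which is precisely (3.21).

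The only conceptual point that requires attention, rather than a mechanical calculation, is the verification that the Case 3 substitution $e_{1}-e_{2}=p$, $e_{1}e_{2}=2q$ is indeed forced by the $(p,q)$-Jacobsthal setting; but this is immediate from the fact that the denominator of the rational generating function produced by Theorem 1 (and its companions) is $1-(e_{1}-e_{2})z-e_{1}e_{2}z^{2}$ up to reindexing, which matches the Jacobsthal recurrence $J_{p,q,n}=pJ_{p,q,n-1}+2qJ_{p,q,n-2}$ exactly when those two parameter choices are made. Apart from this bookkeeping, there is no genuine obstacle: the proof is a direct specialization parallel to Propositions for $F_{p,q,2n}$ and $P_{p,q,2n}$.
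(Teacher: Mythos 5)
Your proposal is correct and follows exactly the paper's own proof: invoke $J_{p,q,n}=S_{n-1}(e_{1}+[-e_{2}])$ from \cite{NABIHA}, replace $n$ by $2n$, and apply the Case 3 specialization (3.18) with $e_{1}-e_{2}=p$, $e_{1}e_{2}=2q$. The extra remark justifying why the Case 3 substitution matches the Jacobsthal recurrence is a sensible addition but does not change the argument.
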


\begin{proof}
By \cite{NABIHA}, we have%
\begin{equation*}
J_{p,q,n}=S_{n-1}\left( e_{1}+\left[ -e_{2}\right] \right) .
\end{equation*}

Writing $\left( 2n\right) $ instead of $\left( n\right) $, we get%
\begin{equation*}
J_{p,q,2n}=S_{2n-1}\left( e_{1}+\left[ -e_{2}\right] \right) .
\end{equation*}

Then, by relation (3.18), we obtain 
\begin{eqnarray*}
\sum\limits_{n=0}^{\infty }J_{p,q,2n}z^{n} &=&\sum\limits_{n=0}^{\infty
}S_{2n-1}\left( e_{1}+\left[ -e_{2}\right] \right) z^{n} \\
&=&\frac{pz}{1-\left( p^{2}+4q\right) z+4q^{2}z^{2}}.
\end{eqnarray*}

Hence, we obtain the desired result.
\end{proof}

\begin{proposition}
For $n\in 
\mathbb{N}
,$ the new generating function of odd $\left( p,q\right) $-Jacobsthal
numbers is given by:%
\begin{equation}
\sum\limits_{n=0}^{\infty }J_{p,q,2n+1}z^{n}=\frac{1-2qz}{1-\left(
p^{2}+4q\right) z+4q^{2}z^{2}}.  \tag{3.22}
\end{equation}
\end{proposition}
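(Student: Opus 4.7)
The plan is to mirror exactly the argument used for Proposition 3 (odd $(p,q)$-Fibonacci) and Proposition 5 (odd $(p,q)$-Pell), only adjusted to the Jacobsthal parameter choice $e_1 e_2 = 2q$ of Case 3. First I would invoke the symmetric-function representation from the Proposition in Section 1, namely
\begin{equation*}
J_{p,q,n} = S_{n-1}\!\left(e_1 + [-e_2]\right),
\end{equation*}
which is valid with the Jacobsthal-side assignment $e_{1,2} = \tfrac{p \pm \sqrt{p^2+8q}}{2}$ (so in particular $e_1 - e_2 = \sqrt{p^2+8q}$ with $e_1 e_2 = -2q$; the substitution $e_1 - e_2 = p,\ e_1 e_2 = 2q$ used in Case 3 corresponds to the $[-e_2]$-shift already absorbed in the notation, exactly as in the Pell/Fibonacci cases).

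Next I would substitute $n \mapsto 2n+1$ to obtain
\begin{equation*}
J_{p,q,2n+1} = S_{2n}\!\left(e_1 + [-e_2]\right),
\end{equation*}
and then sum against $z^n$, producing
\begin{equation*}
\sum_{n=0}^{\infty} J_{p,q,2n+1}\, z^{n} = \sum_{n=0}^{\infty} S_{2n}\!\left(e_1 + [-e_2]\right) z^{n}.
\end{equation*}
Finally I would apply identity (3.19) — which is precisely the Case 3 specialization $(e_1-e_2 = p,\ e_1e_2 = 2q)$ of Lemma 2 — to conclude
\begin{equation*}
\sum_{n=0}^{\infty} J_{p,q,2n+1}\, z^{n} = \frac{1 - 2qz}{1 - (p^2 + 4q)z + 4q^2 z^{2}},
\end{equation*}
which is the claim.

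There is no genuine obstacle: the Theorems of Section~2 and the Case 3 Lemmas have already done all the nontrivial work, and this proposition is a direct, one-line specialization, completely parallel in structure to the odd $(p,q)$-Fibonacci (Proposition 3) and odd $(p,q)$-Pell (Proposition 5) results. The only thing worth being careful about is the consistency of the symmetric-function parametrization: one must check that the identification $e_1 - e_2 = p,\ e_1 e_2 = 2q$ used in Case 3 is exactly the one under which $J_{p,q,n} = S_{n-1}(e_1 + [-e_2])$ yields the Jacobsthal recurrence $J_{p,q,n} = p J_{p,q,n-1} + 2q J_{p,q,n-2}$ with the correct initial conditions — a routine verification from the characteristic equation $x^2 - px - 2q = 0$.
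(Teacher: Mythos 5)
Your proposal matches the paper's own proof essentially verbatim: both invoke $J_{p,q,n}=S_{n-1}\left(e_{1}+\left[ -e_{2}\right]\right)$ from the cited Proposition, substitute $n\mapsto 2n+1$ to get $J_{p,q,2n+1}=S_{2n}\left(e_{1}+\left[ -e_{2}\right]\right)$, and then apply the Case 3 specialization (3.19) of Lemma 2. Your added remark about checking that the identification $e_{1}-e_{2}=p$, $e_{1}e_{2}=2q$ is consistent with the Jacobsthal recurrence is a sensible extra precaution but does not change the argument.
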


\begin{proof}
By referred to \cite{NABIHA}, we have%
\begin{equation*}
J_{p,q,n}=S_{n-1}\left( e_{1}+\left[ -e_{2}\right] \right) .
\end{equation*}

Substituting\ $n$\ by $\left( 2n+1\right) $, we obtain%
\begin{equation*}
J_{p,q,2n+1}=S_{2n}\left( e_{1}+\left[ -e_{2}\right] \right) .
\end{equation*}

From the relation (3.19), we can write%
\begin{eqnarray*}
\sum\limits_{n=0}^{\infty }J_{p,q,2n+1}z^{n} &=&\sum\limits_{n=0}^{\infty
}S_{2n}\left( e_{1}+\left[ -e_{2}\right] \right) z^{n} \\
&=&\frac{1-2qz}{1-\left( p^{2}+4q\right) z+4q^{2}z^{2}}.
\end{eqnarray*}

As required.
\end{proof}

\begin{theorem}
For $n\in 
\mathbb{N}
,$ the new generating function of even $\left( p,q\right) $-Jacobsthal Lucas
numbers is given by:%
\begin{equation}
\sum\limits_{n=0}^{\infty }j_{p,q,2n}z^{n}=\frac{2-\left( p^{2}+4q\right) z}{%
1-\left( p^{2}+4q\right) z+4q^{2}z^{2}}.  \tag{3.23}
\end{equation}
\end{theorem}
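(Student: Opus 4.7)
The plan is to mimic the structure used earlier for the generating function of even $(p,q)$-Lucas numbers and even $(p,q)$-Pell Lucas numbers, but now using the Case 3 substitution $\{e_1 - e_2 = p,\ e_1 e_2 = 2q\}$ which is the one appropriate to the Jacobsthal family. From Proposition 1 (the relations recalled from \cite{NABIHA}), the $(p,q)$-Jacobsthal Lucas numbers admit the symmetric function representation
\begin{equation*}
j_{p,q,n} = 2S_{n}\!\left(e_{1}+[-e_{2}]\right) - p\,S_{n-1}\!\left(e_{1}+[-e_{2}]\right).
\end{equation*}
First I would replace $n$ by $2n$ to obtain $j_{p,q,2n} = 2S_{2n}(e_{1}+[-e_{2}]) - p\,S_{2n-1}(e_{1}+[-e_{2}])$.

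Next, I would multiply both sides by $z^n$, sum over $n \geq 0$, and split the result as
\begin{equation*}
\sum_{n=0}^{\infty} j_{p,q,2n} z^{n} \;=\; 2\sum_{n=0}^{\infty} S_{2n}(e_{1}+[-e_{2}])\,z^{n} \;-\; p\sum_{n=0}^{\infty} S_{2n-1}(e_{1}+[-e_{2}])\,z^{n}.
\end{equation*}
The two series on the right are precisely those evaluated in (3.19) and (3.18) of Case 3. Substituting these closed forms in gives, over the common denominator $1-(p^{2}+4q)z+4q^{2}z^{2}$, a numerator equal to $2(1-2qz) - p^{2}z = 2 - 4qz - p^{2}z = 2 - (p^{2}+4q)z$.

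The argument is entirely parallel to the proofs of the even $(p,q)$-Lucas and $(p,q)$-Pell Lucas generating functions; the only real point to watch is to use the correct Case 3 substitution $e_{1}e_{2}=2q$ (so that $q^{2}$ in Case 1 becomes $4q^{2}$ here, and $p^{2}+2q$ becomes $p^{2}+4q$). I do not anticipate any genuine obstacle: once the Case 3 lemmas (3.18)--(3.20) are in hand, the derivation reduces to linear-combination bookkeeping and a one-line algebraic simplification of the numerator.
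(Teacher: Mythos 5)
Your proposal is correct and follows essentially the same route as the paper's own proof: both express $j_{p,q,2n}$ via $2S_{2n}(e_{1}+[-e_{2}])-pS_{2n-1}(e_{1}+[-e_{2}])$, invoke the Case 3 identities (3.18) and (3.19), and reduce the numerator to $2(1-2qz)-p^{2}z=2-(p^{2}+4q)z$. No gaps.
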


\begin{proof}
We have%
\begin{equation*}
j_{p,q,n}=2S_{n}\left( e_{1}+\left[ -e_{2}\right] \right) -pS_{n-1}\left(
e_{1}+\left[ -e_{2}\right] \right) ,\text{ (see \cite{NABIHA}).}
\end{equation*}%
By setting $n=2n,$ we get%
\begin{equation*}
j_{p,q,2n}=2S_{2n}\left( e_{1}+\left[ -e_{2}\right] \right) -pS_{2n-1}\left(
e_{1}+\left[ -e_{2}\right] \right) \text{.}
\end{equation*}%
Then%
\begin{eqnarray*}
\sum\limits_{n=0}^{\infty }j_{p,q,2n}z^{n} &=&\sum\limits_{n=0}^{\infty
}\left( 2S_{2n}\left( e_{1}+\left[ -e_{2}\right] \right) -pS_{2n-1}\left(
e_{1}+\left[ -e_{2}\right] \right) \right) z^{n} \\
&=&2\sum\limits_{n=0}^{\infty }S_{2n}\left( e_{1}+\left[ -e_{2}\right]
\right) z^{n}-p\sum\limits_{n=0}^{\infty }S_{2n-1}\left( e_{1}+\left[ -e_{2}%
\right] \right) z^{n}.
\end{eqnarray*}

Multiplying the equation (3.19) by $\left( 2\right) $ and adding it to the
equation obtained by (3.18) multiplying by $(-p)$, then we obtain the
following equality%
\begin{eqnarray*}
\sum\limits_{n=0}^{\infty }j_{p,q,2n}z^{n} &=&\frac{2\left( 1-2qz\right) }{%
1-\left( p^{2}+4q\right) z+4q^{2}z^{2}}-\frac{p^{2}z}{1-\left(
p^{2}+4q\right) z+4q^{2}z^{2}} \\
&=&\frac{2-\left( p^{2}+4q\right) z}{1-\left( p^{2}+4q\right) z+4q^{2}z^{2}}.
\end{eqnarray*}

Which completes the proof.
\end{proof}

\begin{theorem}
For $n\in 
\mathbb{N}
,$ the new generating function of odd $\left( p,q\right) $-Jacobsthal Lucas
numbers is given by:%
\begin{equation}
\sum\limits_{n=0}^{\infty }j_{p,q,2n+1}z^{n}=\frac{p+2pqz}{1-\left(
p^{2}+4q\right) z+4q^{2}z^{2}}.  \tag{3.24}
\end{equation}
\end{theorem}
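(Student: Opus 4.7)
The plan is to mirror exactly the strategy used for the odd $(p,q)$-Lucas numbers (Theorem 5) and the odd $(p,q)$-Pell Lucas numbers (Theorem 7), since $(p,q)$-Jacobsthal Lucas numbers satisfy the same symmetric-function identity as $(p,q)$-Lucas numbers, the only change being the value of $e_{1}e_{2}$ (now $2q$ in place of $q$), which is precisely what produces the denominator $1-(p^{2}+4q)z+4q^{2}z^{2}$ already appearing in Case 3.

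First I would recall from Proposition 1 the identity
\begin{equation*}
j_{p,q,n}=2S_{n}(e_{1}+[-e_{2}])-pS_{n-1}(e_{1}+[-e_{2}]),
\end{equation*}
and substitute $n\mapsto 2n+1$, obtaining $j_{p,q,2n+1}=2S_{2n+1}(e_{1}+[-e_{2}])-pS_{2n}(e_{1}+[-e_{2}])$. Multiplying by $z^{n}$ and summing over $n\geq 0$ splits the generating function into two pieces, each of which is one of the series already computed in Case 3.

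Next I would substitute the closed forms (3.20) for $\sum S_{2n+1}(e_{1}+[-e_{2}])z^{n}$ (with coefficient $2$) and (3.19) for $\sum S_{2n}(e_{1}+[-e_{2}])z^{n}$ (with coefficient $-p$). Since both series share the common denominator $1-(p^{2}+4q)z+4q^{2}z^{2}$, adding them reduces to combining numerators:
\begin{equation*}
\sum_{n=0}^{\infty}j_{p,q,2n+1}z^{n}=\frac{2p}{1-(p^{2}+4q)z+4q^{2}z^{2}}-\frac{p(1-2qz)}{1-(p^{2}+4q)z+4q^{2}z^{2}}=\frac{p+2pqz}{1-(p^{2}+4q)z+4q^{2}z^{2}},
\end{equation*}
which is the claimed formula.

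There is no real obstacle here: the substitution $e_{1}-e_{2}=p$, $e_{1}e_{2}=2q$ was already absorbed when deriving (3.18)--(3.20), so the whole argument is essentially a linear combination of two established generating functions, followed by a one-line numerator simplification $2p-p(1-2qz)=p+2pqz$. The only point requiring minor care is to make sure the index shift $n\mapsto 2n+1$ is applied consistently to both the $S_{n}$ and $S_{n-1}$ terms, so that one lands on (3.20) and (3.19) rather than on the $S_{2n-1}$-series (3.18).
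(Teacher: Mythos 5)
Your proposal is correct and follows essentially the same route as the paper's own proof: apply the identity $j_{p,q,n}=2S_{n}(e_{1}+[-e_{2}])-pS_{n-1}(e_{1}+[-e_{2}])$ with $n\mapsto 2n+1$, then take $2\times(3.20)+(-p)\times(3.19)$ and simplify the numerator $2p-p(1-2qz)=p+2pqz$. No discrepancies to note.
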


\begin{proof}
By \cite{NABIHA} we have 
\begin{equation*}
j_{p,q,n}=2S_{n}\left( e_{1}+\left[ -e_{2}\right] \right) -pS_{n-1}\left(
e_{1}+\left[ -e_{2}\right] \right) .
\end{equation*}

By putting $n=2n+1,$ we get%
\begin{equation*}
j_{p,q,2n+1}=2S_{2n+1}\left( e_{1}+\left[ -e_{2}\right] \right)
-pS_{2n}\left( e_{1}+\left[ -e_{2}\right] \right) .
\end{equation*}%
Then%
\begin{eqnarray*}
\sum\limits_{n=0}^{\infty }j_{p,q,2n+1}z^{n} &=&\sum\limits_{n=0}^{\infty
}\left( 2S_{2n+1}\left( e_{1}+\left[ -e_{2}\right] \right) -pS_{2n}\left(
e_{1}+\left[ -e_{2}\right] \right) \right) z^{n} \\
&=&2\sum\limits_{n=0}^{\infty }S_{2n+1}\left( e_{1}+\left[ -e_{2}\right]
\right) -p\sum\limits_{n=0}^{\infty }S_{2n}\left( e_{1}+\left[ -e_{2}\right]
\right) \text{.}
\end{eqnarray*}

Multiplying the equation (3.20) by $\left( 2\right) $ and adding it to the
equation obtained by (3.19) multiplying by $(-p)$, then we obtain the
following equality%
\begin{eqnarray*}
\sum\limits_{n=0}^{\infty }j_{p,q,2n+1}z^{n} &=&\frac{2p}{1-\left(
p^{2}+4q\right) z+4q^{2}z^{2}}-\frac{p\left( 1-2qz\right) }{1-\left(
p^{2}+4q\right) z+4q^{2}z^{2}} \\
&=&\frac{p+2pqz}{1-\left( p^{2}+4q\right) z+4q^{2}z^{2}}.
\end{eqnarray*}%
Thus, this completes the proof.
\end{proof}

\begin{itemize}
\item Taking $p=k$ and $q=1$ in the Eqs. (3.21)-(3.24),\ we get the new
generating functions of even and odd $k$-Jacobsthal and $k$-Jacobsthal Lucas
numbers. The calculation and results are listed in the Tab. 4.%
\begin{equation*}
\begin{tabular}{|c|c|}
\hline
C$\text{oefficient of }z^{n}$ & Generating function \\ \hline
$J_{k,2n}$ & $\frac{kz}{1-\left( k^{2}+4\right) z+4z^{2}}$ \\ \hline
$J_{k,2n+1}$ & $\frac{1-2z}{1-\left( k^{2}+4\right) z+4z^{2}}$ \\ \hline
$j_{k,2n}$ & $\frac{2-\left( k^{2}+4\right) z}{1-\left( k^{2}+4\right)
z+4z^{2}}$ \\ \hline
$j_{k,2n+1}$ & $\frac{k+2kz}{1-\left( k^{2}+4\right) z+4z^{2}}$ \\ \hline
\end{tabular}%
\end{equation*}
\end{itemize}

\begin{center}
\textbf{Table 4.} New generating functions for even and odd $k$-Jacobsthal
and $k$-Jacobsthal Lucas numbers.
\end{center}

\begin{remark}
If we take $k=1$ in the Tab. 4,\ we get the generating functions of even and
odd Jacobsthal and Jacobsthal Lucas numbers given in the article of Mezo 
\cite{MEZO} as follows:%
\begin{equation*}
\sum\limits_{n=0}^{\infty }J_{2n}z^{n}=\frac{z}{1-5z+4z^{2}},\text{ }%
\sum\limits_{n=0}^{\infty }J_{2n+1}z^{n}=\frac{1-2z}{1-5z+4z^{2}},\text{ }%
\sum\limits_{n=0}^{\infty }j_{2n}z^{n}=\frac{2-5z}{1-5z+4z^{2}}\text{ and }%
\sum\limits_{n=0}^{\infty }j_{2n+1}z^{n}=\frac{1+2z}{1-5z+4z^{2}}.
\end{equation*}
\end{remark}

\section{\textbf{Generating functions of the products of }$\left( 
\boldsymbol{p,q}\right) $\textbf{-numbers with odd and even }$\left( 
\boldsymbol{p,q}\right) $\textbf{-numbers}}

We now consider the Theorems 1, 2, 3, 4 and 5 in order to derive a new
generating functions of binary products of $\left( p,q\right) $-numbers with
odd and even $\left( p,q\right) $-numbers.

We consider the following sets:%
\begin{equation*}
A=\left\{ a_{1},-a_{2}\right\} \text{ and }E=\left\{ e_{1},-e_{2}\right\} 
\text{.}
\end{equation*}%
By changing $a_{2}$ to $\left( -a_{2}\right) $ and $e_{2}$ to $\left(
-e_{2}\right) $ in Eqs. (2.1), (2.2), (2.3), (2.4), (2.6) and (2.8), it
becomes%
\begin{equation}
\sum\limits_{n=0}^{\infty }S_{n}\left( a_{1}+\left[ -a_{2}\right] \right)
S_{2n-1}\left( e_{1}+\left[ -e_{2}\right] \right) z^{n}=\frac{\left(
a_{1}-a_{2}\right) \left( e_{1}-e_{2}\right) z+a_{1}a_{2}\left(
e_{1}-e_{2}\right) \left( \left( e_{1}-e_{2}\right) ^{2}+2e_{1}e_{2}\right)
z^{2}}{P(z)},  \tag{4.1}
\end{equation}%
\begin{equation}
\sum\limits_{n=0}^{\infty }S_{n}\left( a_{1}+\left[ -a_{2}\right] \right)
S_{2n}\left( e_{1}+\left[ -e_{2}\right] \right) z^{n}=\frac{%
1-e_{1}e_{2}\left( a_{1}-a_{2}\right) z-e_{1}e_{2}a_{1}a_{2}\left( \left(
e_{1}-e_{2}\right) ^{2}+e_{1}e_{2}\right) z^{2}}{P(z)},  \tag{4.2}
\end{equation}%
\begin{equation}
\sum\limits_{n=0}^{\infty }S_{n}\left( a_{1}+\left[ -a_{2}\right] \right)
S_{2n+1}\left( e_{1}+\left[ -e_{2}\right] \right) z^{n}=\frac{%
e_{1}-e_{2}+a_{1}a_{2}e_{1}^{2}e_{2}^{2}\left( e_{1}-e_{2}\right) z^{2}}{P(z)%
},  \tag{4.3}
\end{equation}%
\begin{equation}
\sum\limits_{n=0}^{\infty }S_{n-1}\left( a_{1}+\left[ -a_{2}\right] \right)
S_{2n-1}\left( e_{1}+\left[ -e_{2}\right] \right) z^{n}=\frac{\left(
e_{1}-e_{2}\right) z+a_{1}a_{2}e_{1}^{2}e_{2}^{2}\left( e_{1}-e_{2}\right)
z^{3}}{P(z)},  \tag{4.4}
\end{equation}%
\begin{equation}
\sum\limits_{n=0}^{\infty }S_{n-1}\left( a_{1}+\left[ -a_{2}\right] \right)
S_{2n}\left( e_{1}+\left[ -e_{2}\right] \right) z^{n}=\frac{\left( \left(
e_{1}-e_{2}\right) ^{2}+e_{1}e_{2}\right) z-e_{1}^{2}e_{2}^{2}\left(
a_{1}-a_{2}\right) z^{2}-a_{1}a_{2}e_{1}^{3}e_{2}^{3}z^{3}}{P(z)},  \tag{4.5}
\end{equation}%
\begin{equation}
\sum\limits_{n=0}^{\infty }S_{n-1}\left( a_{1}+\left[ -a_{2}\right] \right)
S_{2n+1}\left( e_{1}+\left[ -e_{2}\right] \right) z^{n}=\frac{\left(
e_{1}-e_{2}\right) \left( \left( e_{1}-e_{2}\right) ^{2}+2e_{1}e_{2}\right)
z-e_{1}^{2}e_{2}^{2}\left( a_{1}-a_{2}\right) \left( e_{1}-e_{2}\right) z^{2}%
}{P(z)},  \tag{4.6}
\end{equation}%
respectively, with $%
P(z)=(1-a_{1}e_{1}^{2}z)(1-a_{1}e_{2}^{2}z)(1+a_{2}e_{1}^{2}z)(1+a_{2}e_{2}^{2}z),\ 
$and we have three cases.

\textbf{Case 1. }Let us now consider the following conditions for Eqs.
(4.1)-(4.6):%
\begin{equation*}
\left\{ 
\begin{array}{l}
a_{1}-a_{2}=p \\ 
a_{1}a_{2}=q%
\end{array}%
\right. \ \text{and }\left\{ 
\begin{array}{l}
e_{1}-e_{2}=p \\ 
e_{1}e_{2}=q%
\end{array}%
\right. .
\end{equation*}

Then it yields%
\begin{equation}
\sum\limits_{n=0}^{\infty }S_{n}\left( a_{1}+\left[ -a_{2}\right] \right)
S_{2n-1}\left( e_{1}+\left[ -e_{2}\right] \right) z^{n}=\frac{%
p^{2}z+pq\left( p^{2}+2q\right) z^{2}}{D_{1}},  \tag{4.7}
\end{equation}%
\begin{equation}
\sum\limits_{n=0}^{\infty }S_{n}\left( a_{1}+\left[ -a_{2}\right] \right)
S_{2n}\left( e_{1}+\left[ -e_{2}\right] \right) z^{n}=\frac{%
1-pqz-q^{2}\left( p^{2}+q\right) z^{2}}{D_{1}},  \tag{4.8}
\end{equation}%
\begin{equation}
\sum\limits_{n=0}^{\infty }S_{n}\left( a_{1}+\left[ -a_{2}\right] \right)
S_{2n+1}\left( e_{1}+\left[ -e_{2}\right] \right) z^{n}=\frac{p+pq^{3}z^{2}}{%
D_{1}},  \tag{4.9}
\end{equation}%
\begin{equation}
\sum\limits_{n=0}^{\infty }S_{n-1}\left( a_{1}+\left[ -a_{2}\right] \right)
S_{2n-1}\left( e_{1}+\left[ -e_{2}\right] \right) z^{n}=\frac{pz+pq^{3}z^{3}%
}{D_{1}},  \tag{4.10}
\end{equation}%
\begin{equation}
\sum\limits_{n=0}^{\infty }S_{n-1}\left( a_{1}+\left[ -a_{2}\right] \right)
S_{2n}\left( e_{1}+\left[ -e_{2}\right] \right) z^{n}=\frac{\left(
p^{2}+q\right) z-pq^{2}z^{2}-q^{4}z^{3}}{D_{1}},  \tag{4.11}
\end{equation}%
\begin{equation}
\sum\limits_{n=0}^{\infty }S_{n-1}\left( a_{1}+\left[ -a_{2}\right] \right)
S_{2n+1}\left( e_{1}+\left[ -e_{2}\right] \right) z^{n}=\frac{p\left(
p^{2}+2q\right) z-p^{2}q^{2}z^{2}}{D_{1}},  \tag{4.12}
\end{equation}%
with 
\begin{equation*}
D_{1}=1-p\left( p^{2}+2q\right) z-q\left( p^{4}+3p^{2}q+2q^{2}\right)
z^{2}+pq^{3}\left( p^{2}+2q\right) z^{3}+q^{6}z^{4}.
\end{equation*}%
And we deduce the following corollaries and theorems.

\begin{corollary}
Let $n$ be a natural number. Then we have the new generating function of the
product of $\left( p,q\right) $-Fibonacci numbers $\left(
F_{p,q,n}F_{p,q,2n}\right) $:%
\begin{eqnarray}
\dsum\limits_{n=0}^{\infty }F_{p,q,n}F_{p,q,2n}z^{n} &=&\frac{pz+pq^{3}z^{3}%
}{D_{1}}  \notag \\
&=&\frac{pz+pq^{3}z^{3}}{1-p\left( p^{2}+2q\right) z-q\left(
p^{4}+3p^{2}q+2q^{2}\right) z^{2}+pq^{3}\left( p^{2}+2q\right)
z^{3}+q^{6}z^{4}},  \TCItag{4.13}
\end{eqnarray}%
with $F_{p,q,n}F_{p,q,2n}=S_{n-1}(a_{1}+[-a_{2}])S_{2n-1}(e_{1}+[-e_{2}]).$
\end{corollary}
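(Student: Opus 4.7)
The plan is to realize the product $F_{p,q,n}F_{p,q,2n}$ as a product of two symmetric functions on two different alphabets and then apply equation (4.10) directly.

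First I would recall from Proposition 1 that $F_{p,q,n}=S_{n-1}(e_{1}+[-e_{2}])$ whenever the alphabet $E=\{e_{1},-e_{2}\}$ satisfies $e_{1}-e_{2}=p$ and $e_{1}e_{2}=q$ (these are precisely the roots of $x^{2}-px-q=0$ up to sign, so Binet's-type representation holds). Since this representation of $F_{p,q,n}$ only depends on the parameters $p$ and $q$, I can introduce a \emph{second} alphabet $A=\{a_{1},-a_{2}\}$ with exactly the same specialization $a_{1}-a_{2}=p$, $a_{1}a_{2}=q$, and then write $F_{p,q,n}=S_{n-1}(a_{1}+[-a_{2}])$ as well. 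Specializing the index to $2n$ in the $E$-copy gives $F_{p,q,2n}=S_{2n-1}(e_{1}+[-e_{2}])$. Multiplying yields the key identity
\begin{equation*}
F_{p,q,n}F_{p,q,2n}=S_{n-1}(a_{1}+[-a_{2}])\,S_{2n-1}(e_{1}+[-e_{2}]),
\end{equation*}
which is exactly the left-hand side of (4.10).

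Next I would apply identity (4.10), which is stated precisely for the two-alphabet situation considered in Case 1 of Section 4, and which already incorporates the substitution $a_{1}-a_{2}=p,\ a_{1}a_{2}=q,\ e_{1}-e_{2}=p,\ e_{1}e_{2}=q$. The numerator of (4.10) is $pz+pq^{3}z^{3}$, matching the claimed numerator of (4.13), and the common denominator in Case 1 is $D_{1}$. Summing over $n\ge 0$ and invoking (4.10) then gives the closed form directly.

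The only step requiring actual work is verifying that $P(z)=(1-a_{1}e_{1}^{2}z)(1-a_{1}e_{2}^{2}z)(1+a_{2}e_{1}^{2}z)(1+a_{2}e_{2}^{2}z)$ really collapses to $D_{1}$ under the Case 1 specialization; this is the main obstacle and I expect it to be the bookkeeping-intensive part. I would handle it by grouping factors as $\bigl[(1-a_{1}e_{1}^{2}z)(1+a_{2}e_{1}^{2}z)\bigr]\bigl[(1-a_{1}e_{2}^{2}z)(1+a_{2}e_{2}^{2}z)\bigr]$, expanding each bracket in terms of $a_{1}-a_{2}$, $a_{1}a_{2}$, $e_{i}^{2}$, and finally using $e_{1}^{2}+e_{2}^{2}=(e_{1}-e_{2})^{2}+2e_{1}e_{2}=p^{2}+2q$ and $e_{1}^{2}e_{2}^{2}=q^{2}$ to land on the quartic $1-p(p^{2}+2q)z-q(p^{4}+3p^{2}q+2q^{2})z^{2}+pq^{3}(p^{2}+2q)z^{3}+q^{6}z^{4}$. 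Since this denominator computation has already been tabulated as $D_{1}$ in Case 1 preceding the corollary, the proof itself can simply quote it, and the desired formula (4.13) follows immediately.
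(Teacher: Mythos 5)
Your proposal is correct and is essentially the paper's own (implicit) argument: the corollary is stated as a direct consequence of equation (4.10) together with the identification $F_{p,q,n}F_{p,q,2n}=S_{n-1}(a_{1}+[-a_{2}])S_{2n-1}(e_{1}+[-e_{2}])$ under the Case 1 specialization, with the collapse of $P(z)$ to $D_{1}$ already tabulated before the corollary. Nothing in your route differs from the paper's beyond making the two-alphabet bookkeeping explicit.
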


\begin{corollary}
For $n\in 
\mathbb{N}
,$ the new generating function of the product of $\left( p,q\right) $%
-Fibonacci numbers $\left( F_{p,q,n}F_{p,q,2n+1}\right) $ is given by:%
\begin{eqnarray}
\dsum\limits_{n=0}^{\infty }F_{p,q,n}F_{p,q,2n+1}z^{n} &=&\frac{\left(
p^{2}+q\right) z-pq^{2}z^{2}-q^{4}z^{3}}{D_{1}}  \notag \\
&=&\frac{\left( p^{2}+q\right) z-pq^{2}z^{2}-q^{4}z^{3}}{1-p\left(
p^{2}+2q\right) z-q\left( p^{4}+3p^{2}q+2q^{2}\right) z^{2}+pq^{3}\left(
p^{2}+2q\right) z^{3}+q^{6}z^{4}},  \TCItag{4.14}
\end{eqnarray}%
with $F_{p,q,n}F_{p,q,2n+1}=S_{n-1}(a_{1}+[-a_{2}])S_{2n}(e_{1}+[-e_{2}]).$
\end{corollary}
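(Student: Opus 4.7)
The plan is to reduce the statement to a direct application of identity (4.11), which has already been derived in Case 1 of Section 4. The key observation is that the $(p,q)$-Fibonacci numbers have the symmetric-function representation $F_{p,q,n}=S_{n-1}(e_{1}+[-e_{2}])$ recalled from \cite{NABIHA} and used repeatedly in the preceding propositions, and that the parameters $p$ and $q$ are determined symmetrically by the same pair of equations $e_{1}-e_{2}=p$, $e_{1}e_{2}=q$ regardless of which alphabet ($A$ or $E$) carries them.

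First I would recall from the cited result the identity $F_{p,q,n}=S_{n-1}(e_{1}+[-e_{2}])$ with $e_{1,2}=(p\pm\sqrt{p^{2}+4q})/2$. Substituting $n\mapsto 2n+1$ into this relation gives $F_{p,q,2n+1}=S_{2n}(e_{1}+[-e_{2}])$. Second, because the identity is symmetric in the two roots, I may introduce a second alphabet $A=\{a_{1},-a_{2}\}$ obeying $a_{1}-a_{2}=p$, $a_{1}a_{2}=q$ and write $F_{p,q,n}=S_{n-1}(a_{1}+[-a_{2}])$ using these. Consequently the product becomes
\begin{equation*}
F_{p,q,n}F_{p,q,2n+1}=S_{n-1}(a_{1}+[-a_{2}])\,S_{2n}(e_{1}+[-e_{2}]).
\end{equation*}

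Third, I would sum this product against $z^{n}$ and invoke identity (4.11), which was established precisely under the Case 1 substitutions $a_{1}-a_{2}=e_{1}-e_{2}=p$ and $a_{1}a_{2}=e_{1}e_{2}=q$. That identity gives
\begin{equation*}
\sum_{n=0}^{\infty}S_{n-1}(a_{1}+[-a_{2}])\,S_{2n}(e_{1}+[-e_{2}])z^{n}
=\frac{(p^{2}+q)z-pq^{2}z^{2}-q^{4}z^{3}}{D_{1}},
\end{equation*}
with $D_{1}=1-p(p^{2}+2q)z-q(p^{4}+3p^{2}q+2q^{2})z^{2}+pq^{3}(p^{2}+2q)z^{3}+q^{6}z^{4}$, which is exactly the closed form claimed in (4.14).

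There is no real obstacle: all the analytic work sits inside Theorem 4 (and its corollary (4.11)), and the content of Corollary 2 is just the reinterpretation of the symmetric-function product as a product of $(p,q)$-Fibonacci numbers. The only point that deserves explicit mention in the write-up is the justification for using two copies of the same $(p,q)$-alphabet, namely that the defining relations $a_{1}-a_{2}=p$, $a_{1}a_{2}=q$ uniquely recover the characteristic roots of the Fibonacci recurrence, so $S_{n-1}(a_{1}+[-a_{2}])$ and $S_{n-1}(e_{1}+[-e_{2}])$ both coincide with $F_{p,q,n}$. Once this is recorded, the proof is a single line quoting (4.11).
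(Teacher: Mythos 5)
Your proposal is correct and matches the paper's (implicit) argument exactly: the paper states Corollary 2 as an immediate consequence of identity (4.11) under the Case 1 substitutions, with the identification $F_{p,q,n}F_{p,q,2n+1}=S_{n-1}(a_{1}+[-a_{2}])S_{2n}(e_{1}+[-e_{2}])$ recorded at the end of the statement, which is precisely the reduction you carry out. Your extra remark justifying the use of two copies of the same alphabet is a reasonable clarification but does not change the route.
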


\begin{theorem}
For $n\in 
\mathbb{N}
,$ the new generating function of the product of $\left( p,q\right) $-Lucas
numbers $\left( L_{p,q,n}L_{p,q,2n}\right) $ is given by:%
\begin{equation}
\dsum\limits_{n=0}^{\infty }L_{p,q,n}L_{p,q,2n}z^{n}=\frac{4-3p\left(
p^{2}+2q\right) z-2q\left( p^{4}+3p^{2}q+2q^{2}\right) z^{2}+pq^{3}\left(
p^{2}+2q\right) z^{3}}{1-p\left( p^{2}+2q\right) z-q\left(
p^{4}+3p^{2}q+2q^{2}\right) z^{2}+pq^{3}\left( p^{2}+2q\right)
z^{3}+q^{6}z^{4}}.  \tag{4.15}
\end{equation}
\end{theorem}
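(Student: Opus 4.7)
The plan is to reduce the product $L_{p,q,n}L_{p,q,2n}$ to a linear combination of the four bivariate generating functions already established in Case 1 of this section, namely the formulas (4.7), (4.8), (4.10) and (4.11), all of which share the common denominator $D_{1}$. The starting point is the symmetric-function identity recorded in Proposition~1 of the introduction, which gives
\begin{equation*}
L_{p,q,m}=2S_{m}(e_{1}+[-e_{2}])-pS_{m-1}(e_{1}+[-e_{2}]),
\end{equation*}
and applies equally with the alphabet $A=\{a_{1},-a_{2}\}$ under the substitution $a_{1}-a_{2}=p,\ a_{1}a_{2}=q$. Applied once with the alphabet $A$ at index $n$ and once with the alphabet $E$ at index $2n$, this yields
\begin{equation*}
L_{p,q,n}L_{p,q,2n}=\bigl(2S_{n}(a_{1}+[-a_{2}])-pS_{n-1}(a_{1}+[-a_{2}])\bigr)\bigl(2S_{2n}(e_{1}+[-e_{2}])-pS_{2n-1}(e_{1}+[-e_{2}])\bigr).
\end{equation*}

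Expanding the product distributes it into exactly the four mixed products whose generating functions have been tabulated:
\begin{equation*}
L_{p,q,n}L_{p,q,2n}=4\,S_{n}S_{2n}-2p\,S_{n}S_{2n-1}-2p\,S_{n-1}S_{2n}+p^{2}\,S_{n-1}S_{2n-1},
\end{equation*}
where the first factor in each product lives on $A$ and the second on $E$. Multiplying by $z^{n}$ and summing over $n\ge 0$, I would then apply (4.8), (4.7), (4.11) and (4.10) respectively, which is legitimate because all four formulas share the denominator $D_{1}$, so only the numerators need to be combined.

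The remaining step is purely algebraic: form the single fraction
\begin{equation*}
\frac{4\bigl(1-pqz-q^{2}(p^{2}+q)z^{2}\bigr)-2p\bigl(p^{2}z+pq(p^{2}+2q)z^{2}\bigr)-2p\bigl((p^{2}+q)z-pq^{2}z^{2}-q^{4}z^{3}\bigr)+p^{2}\bigl(pz+pq^{3}z^{3}\bigr)}{D_{1}},
\end{equation*}
and collect coefficients of $z^{0},z^{1},z^{2},z^{3}$ in the numerator. The constant term is obviously $4$; the coefficient of $z$ collapses to $-4pq-2p^{3}-2p(p^{2}+q)+p^{3}=-3p(p^{2}+2q)$; the coefficient of $z^{2}$ collects to $-2q(p^{4}+3p^{2}q+2q^{2})$; and the coefficient of $z^{3}$ combines to $pq^{3}(p^{2}+2q)$. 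Matching these to the numerator stated in (4.15) closes the proof. The only real obstacle is a clean bookkeeping of the four numerator polynomials; everything else is automatic from the earlier theorems.
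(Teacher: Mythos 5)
Your proposal is correct and follows essentially the same route as the paper: both expand $L_{p,q,n}L_{p,q,2n}$ via the representation $L_{p,q,m}=2S_{m}-pS_{m-1}$ into the four mixed products, apply (4.7), (4.8), (4.10), (4.11) over the common denominator $D_{1}$, and collect the numerator coefficients. Your stated coefficients of $z^{0},z^{1},z^{2},z^{3}$ all check out against the paper's computation.
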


\begin{proof}
We have%
\begin{eqnarray*}
\dsum\limits_{n=0}^{\infty }L_{p,q,n}L_{p,q,2n}z^{n}
&=&\dsum\limits_{n=0}^{\infty }\left( 
\begin{array}{c}
\left( 2S_{n}(a_{1}+[-a_{2}])-pS_{n-1}(a_{1}+[-a_{2}])\right) \\ 
\times \left( 2S_{2n}(e_{1}+[-e_{2}])-pS_{2n-1}(e_{1}+[-e_{2}])\right)%
\end{array}%
\right) z^{n} \\
&=&4\dsum\limits_{n=0}^{\infty
}S_{n}(a_{1}+[-a_{2}])S_{2n}(e_{1}+[-e_{2}])z^{n} \\
&&-2p\dsum\limits_{n=0}^{\infty
}S_{n}(a_{1}+[-a_{2}])S_{2n-1}(e_{1}+[-e_{2}])z^{n} \\
&&-2p\dsum\limits_{n=0}^{\infty
}S_{n-1}(a_{1}+[-a_{2}])S_{2n}(e_{1}+[-e_{2}])z^{n} \\
&&+p^{2}\dsum\limits_{n=0}^{\infty
}S_{n-1}(a_{1}+[-a_{2}])S_{2n-1}(e_{1}+[-e_{2}])z^{n}.
\end{eqnarray*}

Using the relationships $\left( 4.7\right) ,$ $\left( 4.8\right) ,$ $\left(
4.10\right) $ and $\left( 4.11\right) $, we obtain%
\begin{eqnarray*}
\dsum\limits_{n=0}^{\infty }L_{p,q,n}L_{p,q,2n}z^{n} &=&\frac{4\left(
1-pqz-q^{2}\left( p^{2}+q\right) z^{2}\right) }{D_{1}}-\frac{2p\left(
p^{2}z+pq\left( p^{2}+2q\right) z^{2}\right) }{D_{1}} \\
&&-\frac{2p\left( \left( p^{2}+q\right) z-pq^{2}z^{2}-q^{4}z^{3}\right) }{%
D_{1}}+\frac{p^{2}\left( pz+pq^{3}z^{3}\right) }{D_{1}} \\
&=&\frac{4-3p\left( p^{2}+2q\right) z-2q\left( p^{4}+3p^{2}q+2q^{2}\right)
z^{2}+pq^{3}\left( p^{2}+2q\right) z^{3}}{D_{1}} \\
&=&\frac{4-3p\left( p^{2}+2q\right) z-2q\left( p^{4}+3p^{2}q+2q^{2}\right)
z^{2}+pq^{3}\left( p^{2}+2q\right) z^{3}}{1-p\left( p^{2}+2q\right)
z-q\left( p^{4}+3p^{2}q+2q^{2}\right) z^{2}+pq^{3}\left( p^{2}+2q\right)
z^{3}+q^{6}z^{4}}.
\end{eqnarray*}

So, the proof is completed.
\end{proof}

\begin{theorem}
Let $n$ be a natural number. Then we have the new generating function of the
product of $\left( p,q\right) $-Lucas numbers $\left(
L_{p,q,n}L_{p,q,2n+1}\right) $:%
\begin{equation}
\dsum\limits_{n=0}^{\infty }L_{p,q,n}L_{p,q,2n+1}z^{n}=\frac{2p-p^{2}\left(
p^{2}+q\right) z+3pq^{2}\left( p^{2}+2q\right) z^{2}-p^{2}q^{4}z^{3}}{%
1-p\left( p^{2}+2q\right) z-q\left( p^{4}+3p^{2}q+2q^{2}\right)
z^{2}+pq^{3}\left( p^{2}+2q\right) z^{3}+q^{6}z^{4}}.  \tag{4.16}
\end{equation}
\end{theorem}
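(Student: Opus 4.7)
The plan is to mimic the structure of the proof of Theorem~9 (the $L_{p,q,n}L_{p,q,2n}$ case), exploiting the fact that the product $L_{p,q,n}L_{p,q,2n+1}$ becomes a bilinear expression in the symmetric functions once the Lucas numbers are represented via $S_{n}$ and $S_{n-1}$. Indeed, by the proposition recalled from \cite{NABIHA}, we have the representations
\begin{equation*}
L_{p,q,n}=2S_{n}(a_{1}+[-a_{2}])-pS_{n-1}(a_{1}+[-a_{2}]),\qquad L_{p,q,2n+1}=2S_{2n+1}(e_{1}+[-e_{2}])-pS_{2n}(e_{1}+[-e_{2}]),
\end{equation*}
where the alphabets $A=\{a_{1},-a_{2}\}$ and $E=\{e_{1},-e_{2}\}$ satisfy $a_{1}-a_{2}=p$, $a_{1}a_{2}=q$, $e_{1}-e_{2}=p$ and $e_{1}e_{2}=q$, matching Case~1 of Section~4.

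Expanding the product and exchanging sum and product yields
\begin{equation*}
\sum_{n=0}^{\infty}L_{p,q,n}L_{p,q,2n+1}z^{n}=4\Sigma_{1}-2p\,\Sigma_{2}-2p\,\Sigma_{3}+p^{2}\Sigma_{4},
\end{equation*}
where $\Sigma_{1}$, $\Sigma_{2}$, $\Sigma_{3}$, $\Sigma_{4}$ are the generating series for $S_{n}(A)S_{2n+1}(E)$, $S_{n}(A)S_{2n}(E)$, $S_{n-1}(A)S_{2n+1}(E)$ and $S_{n-1}(A)S_{2n}(E)$, respectively. Each of these has already been computed in Section~4: $\Sigma_{1}$ is given by (4.9), $\Sigma_{2}$ by (4.8), $\Sigma_{3}$ by (4.12) and $\Sigma_{4}$ by (4.11). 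All four share the common denominator $D_{1}=1-p(p^{2}+2q)z-q(p^{4}+3p^{2}q+2q^{2})z^{2}+pq^{3}(p^{2}+2q)z^{3}+q^{6}z^{4}$.

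It then remains to collect the numerators. Substituting and grouping by powers of $z$ gives, for the numerator,
\begin{equation*}
4(p+pq^{3}z^{2})-2p\bigl(1-pqz-q^{2}(p^{2}+q)z^{2}\bigr)-2p\bigl(p(p^{2}+2q)z-p^{2}q^{2}z^{2}\bigr)+p^{2}\bigl((p^{2}+q)z-pq^{2}z^{2}-q^{4}z^{3}\bigr).
\end{equation*}
The constant term reduces to $2p$; the coefficient of $z$ is $2p^{2}q-2p^{2}(p^{2}+2q)+p^{2}(p^{2}+q)=-p^{2}(p^{2}+q)$; the coefficient of $z^{2}$ is $4pq^{3}+2pq^{2}(p^{2}+q)+2p^{3}q^{2}-p^{3}q^{2}=3pq^{2}(p^{2}+2q)$; and the coefficient of $z^{3}$ is $-p^{2}q^{4}$. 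This is precisely the numerator of (4.16), completing the argument.

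The main obstacle is purely bookkeeping: there are no conceptual subtleties once one recognises that (4.8), (4.9), (4.11) and (4.12) are exactly the four building blocks needed, and that the shared denominator $D_{1}$ makes the combination a scalar linear combination of numerator polynomials. The only care required is in verifying the cancellations in the coefficient of $z$ (where three terms combine) and in the coefficient of $z^{2}$ (where four terms combine); a sign error in either would propagate to an incorrect final numerator, so one would want to double-check those two coefficients explicitly.
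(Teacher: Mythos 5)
Your proposal is correct and follows essentially the same route as the paper: both expand $L_{p,q,n}L_{p,q,2n+1}$ via the representation $L_{p,q,m}=2S_{m}-pS_{m-1}$ into the combination $4\Sigma_{1}-2p\Sigma_{2}-2p\Sigma_{3}+p^{2}\Sigma_{4}$ and substitute the numerators of (4.9), (4.8), (4.12) and (4.11) over the common denominator $D_{1}$. Your coefficient bookkeeping (in particular $-p^{2}(p^{2}+q)$ for $z$ and $3pq^{2}(p^{2}+2q)$ for $z^{2}$) matches the paper's result exactly.
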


\begin{proof}
We have%
\begin{eqnarray*}
\dsum\limits_{n=0}^{\infty }L_{p,q,n}L_{p,q,2n+1}z^{n}
&=&\dsum\limits_{n=0}^{\infty }\left( 
\begin{array}{c}
\left( 2S_{n}(a_{1}+[-a_{2}])-pS_{n-1}(a_{1}+[-a_{2}])\right) \\ 
\times \left( 2S_{2n+1}(e_{1}+[-e_{2}])-pS_{2n}(e_{1}+[-e_{2}])\right)%
\end{array}%
\right) z^{n} \\
&=&4\dsum\limits_{n=0}^{\infty
}S_{n}(a_{1}+[-a_{2}])S_{2n+1}(e_{1}+[-e_{2}])z^{n} \\
&&-2p\dsum\limits_{n=0}^{\infty
}S_{n}(a_{1}+[-a_{2}])S_{2n}(e_{1}+[-e_{2}])z^{n} \\
&&-2p\dsum\limits_{n=0}^{\infty
}S_{n-1}(a_{1}+[-a_{2}])S_{2n+1}(e_{1}+[-e_{2}])z^{n} \\
&&+p^{2}\dsum\limits_{n=0}^{\infty
}S_{n-1}(a_{1}+[-a_{2}])S_{2n}(e_{1}+[-e_{2}])z^{n}.
\end{eqnarray*}

Using the relationships $\left( 4.8\right) ,$ $\left( 4.9\right) ,$ $\left(
4.11\right) $ and $\left( 4.12\right) $, we obtain%
\begin{eqnarray*}
\dsum\limits_{n=0}^{\infty }L_{p,q,n}L_{p,q,2n+1}z^{n} &=&\frac{4\left(
p+pq^{3}z^{2}\right) }{D_{1}}-\frac{2p\left( 1-pqz-q^{2}\left(
p^{2}+q\right) z^{2}\right) }{D_{1}} \\
&&-\frac{2p\left( p\left( p^{2}+2q\right) z-p^{2}q^{2}z^{2}\right) }{D_{1}}+%
\frac{p^{2}\left( \left( p^{2}+q\right) z-pq^{2}z^{2}-q^{4}z^{3}\right) }{%
D_{1}} \\
&=&\frac{2p-p^{2}\left( p^{2}+q\right) z+3pq^{2}\left( p^{2}+2q\right)
z^{2}-p^{2}q^{4}z^{3}}{D_{1}} \\
&=&\frac{2p-p^{2}\left( p^{2}+q\right) z+3pq^{2}\left( p^{2}+2q\right)
z^{2}-p^{2}q^{4}z^{3}}{1-p\left( p^{2}+2q\right) z-q\left(
p^{4}+3p^{2}q+2q^{2}\right) z^{2}+pq^{3}\left( p^{2}+2q\right)
z^{3}+q^{6}z^{4}}.
\end{eqnarray*}

So, the proof is completed.
\end{proof}

\begin{corollary}
Putting $p=k$ and $q=1$ in Eqs. (4.13)-(4.16) gives the following new
generating functions:%
\begin{eqnarray*}
\dsum\limits_{n=0}^{\infty }F_{k,n}F_{k,2n}z^{n} &=&\frac{kz+kz^{3}}{%
1-k\left( k^{2}+2\right) z-\left( k^{4}+3k^{2}+2\right) z^{2}+k\left(
k^{2}+2\right) z^{3}+z^{4}}. \\
\dsum\limits_{n=0}^{\infty }F_{k,n}F_{k,2n+1}z^{n} &=&\frac{\left(
k^{2}+1\right) z-kz^{2}-z^{3}}{1-k\left( k^{2}+2\right) z-\left(
k^{4}+3k^{2}+2\right) z^{2}+k\left( k^{2}+2\right) z^{3}+z^{4}}. \\
\dsum\limits_{n=0}^{\infty }L_{k,n}L_{k,2n}z^{n} &=&\frac{4-3k\left(
k^{2}+2\right) z-2\left( k^{4}+3k^{2}+2\right) z^{2}+k\left( k^{2}+2\right)
z^{3}}{1-k\left( k^{2}+2\right) z-\left( k^{4}+3k^{2}+2\right) z^{2}+k\left(
k^{2}+2\right) z^{3}+z^{4}}. \\
\dsum\limits_{n=0}^{\infty }L_{k,n}L_{k,2n+1}z^{n} &=&\frac{2k-k^{2}\left(
k^{2}+1\right) z+3k\left( k^{2}+2\right) z^{2}-k^{2}z^{3}}{1-k\left(
k^{2}+2\right) z-\left( k^{4}+3k^{2}+2\right) z^{2}+k\left( k^{2}+2\right)
z^{3}+z^{4}}.
\end{eqnarray*}
\end{corollary}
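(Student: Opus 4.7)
The plan is to obtain the four generating functions in this corollary by direct specialization of the already-established formulas $(4.13)$, $(4.14)$, $(4.15)$ and $(4.16)$ to the parameter values $p=k$ and $q=1$. Under this substitution, the recurrences in $(1.4)$ and $(1.7)$ reduce to the recurrences for the $k$-Fibonacci numbers $F_{k,n}$ and the $k$-Lucas numbers $L_{k,n}$ listed in Table 1, so $F_{p,q,n}\mapsto F_{k,n}$ and $L_{p,q,n}\mapsto L_{k,n}$, and the left-hand sides of the four identities become the four generating functions claimed.

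The first step is to specialize the common denominator. Setting $p=k$, $q=1$ in
\[
D_{1}=1-p(p^{2}+2q)z-q(p^{4}+3p^{2}q+2q^{2})z^{2}+pq^{3}(p^{2}+2q)z^{3}+q^{6}z^{4}
\]
produces $1-k(k^{2}+2)z-(k^{4}+3k^{2}+2)z^{2}+k(k^{2}+2)z^{3}+z^{4}$, which is precisely the common denominator appearing in all four claimed formulas.

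The second step is to specialize the four numerators in turn. For $(4.13)$ the numerator $pz+pq^{3}z^{3}$ collapses to $kz+kz^{3}$; for $(4.14)$ the numerator $(p^{2}+q)z-pq^{2}z^{2}-q^{4}z^{3}$ becomes $(k^{2}+1)z-kz^{2}-z^{3}$; for $(4.15)$ the numerator $4-3p(p^{2}+2q)z-2q(p^{4}+3p^{2}q+2q^{2})z^{2}+pq^{3}(p^{2}+2q)z^{3}$ becomes $4-3k(k^{2}+2)z-2(k^{4}+3k^{2}+2)z^{2}+k(k^{2}+2)z^{3}$; and for $(4.16)$ the numerator $2p-p^{2}(p^{2}+q)z+3pq^{2}(p^{2}+2q)z^{2}-p^{2}q^{4}z^{3}$ becomes $2k-k^{2}(k^{2}+1)z+3k(k^{2}+2)z^{2}-k^{2}z^{3}$. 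Combining each specialized numerator with the specialized $D_{1}$ yields the four displayed identities.

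There is no genuine obstacle here, since the argument is purely a substitution into four already-proved identities; the only task is the mechanical simplification of each polynomial in $z$ after setting $q=1$, together with the remark that $F_{p,q,n}$ and $L_{p,q,n}$ reduce to $F_{k,n}$ and $L_{k,n}$ under this choice of parameters. In particular, no further appeal to the symmetrizing operator $\delta_{e_{1}e_{2}}^{k}$ or to Proposition~1 is needed, since all of that work has been absorbed into $(4.13)$–$(4.16)$.
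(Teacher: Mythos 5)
Your proposal is correct and matches the paper's (implicit) argument exactly: the paper states this corollary without proof as an immediate specialization of Eqs.\ (4.13)--(4.16) at $p=k$, $q=1$, which is precisely the substitution you carry out, and all four specialized numerators and the specialized denominator $D_{1}$ check out against the displayed formulas. Nothing further is needed.
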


\begin{itemize}
\item Put $k=1$ in the Corollary 3, we obtain the following table:%
\begin{equation*}
\begin{tabular}{|c|c|}
\hline
$\text{Coefficient of }z^{n}$ & Generating function \\ \hline
$F_{n}F_{2n}$ & $\frac{z+z^{3}}{1-3z-6z^{2}+3z^{3}+z^{4}}$ \\ \hline
$F_{n}F_{2n+1}$ & $\frac{2z-z^{2}-z^{3}}{1-3z-6z^{2}+3z^{3}+z^{4}}$ \\ \hline
$L_{n}L_{2n}$ & $\frac{4-9z-12z^{2}+3z^{3}}{1-3z-6z^{2}+3z^{3}+z^{4}}$ \\ 
\hline
$L_{n}L_{2n+1}$ & $\frac{2-2z+9z^{2}-z^{3}}{1-3z-6z^{2}+3z^{3}+z^{4}}$ \\ 
\hline
\end{tabular}%
\end{equation*}
\end{itemize}

\begin{center}
\textbf{Table 5.} A new generating functions of the products of some numbers.
\end{center}

\textbf{Case 2. }Let us now consider the following conditions for Eqs.
(4.1)-(4.6):%
\begin{equation*}
\left\{ 
\begin{array}{l}
a_{1}-a_{2}=2p \\ 
a_{1}a_{2}=q%
\end{array}%
\right. \text{ and }\left\{ 
\begin{array}{l}
e_{1}-e_{2}=2p \\ 
e_{1}e_{2}=q%
\end{array}%
\right. .
\end{equation*}

Then it yields%
\begin{equation}
\sum\limits_{n=0}^{\infty }S_{n}\left( a_{1}+\left[ -a_{2}\right] \right)
S_{2n-1}\left( e_{1}+\left[ -e_{2}\right] \right) z^{n}=\frac{%
4p^{2}z+4pq\left( 2p^{2}+q\right) z^{2}}{D_{2}},  \tag{4.17}
\end{equation}%
\begin{equation}
\sum\limits_{n=0}^{\infty }S_{n}\left( a_{1}+\left[ -a_{2}\right] \right)
S_{2n}\left( e_{1}+\left[ -e_{2}\right] \right) z^{n}=\frac{%
1-2pqz-q^{2}\left( 4p^{2}+q\right) z^{2}}{D_{2}},  \tag{4.18}
\end{equation}%
\begin{equation}
\sum\limits_{n=0}^{\infty }S_{n}\left( a_{1}+\left[ -a_{2}\right] \right)
S_{2n+1}\left( e_{1}+\left[ -e_{2}\right] \right) z^{n}=\frac{2p+2pq^{3}z^{2}%
}{D_{2}},  \tag{4.19}
\end{equation}%
\begin{equation}
\sum\limits_{n=0}^{\infty }S_{n-1}\left( a_{1}+\left[ -a_{2}\right] \right)
S_{2n-1}\left( e_{1}+\left[ -e_{2}\right] \right) z^{n}=\frac{%
2pz+2pq^{3}z^{3}}{D_{2}},  \tag{4.20}
\end{equation}%
\begin{equation}
\sum\limits_{n=0}^{\infty }S_{n-1}\left( a_{1}+\left[ -a_{2}\right] \right)
S_{2n}\left( e_{1}+\left[ -e_{2}\right] \right) z^{n}=\frac{\left(
4p^{2}+q\right) z-2pq^{2}z^{2}-q^{4}z^{3}}{D_{2}},  \tag{4.21}
\end{equation}%
\begin{equation}
\sum\limits_{n=0}^{\infty }S_{n-1}\left( a_{1}+\left[ -a_{2}\right] \right)
S_{2n+1}\left( e_{1}+\left[ -e_{2}\right] \right) z^{n}=\frac{4p\left(
2p^{2}+q\right) z-4p^{2}q^{2}z^{2}}{D_{2}},  \tag{4.22}
\end{equation}%
with%
\begin{equation*}
D_{2}=1-4p\left( 2p^{2}+q\right) z-2q\left( 8p^{4}+6p^{2}q+q^{2}\right)
z^{2}+4pq^{3}\left( 2p^{2}+q\right) z^{3}+q^{6}z^{4}.
\end{equation*}%
And we deduce the following corollaries and theorems.

\begin{corollary}
For $n\in 
\mathbb{N}
,$ the new generating function of the product of $\left( p,q\right) $-Pell
numbers $\left( P_{p,q,n}P_{p,q,2n}\right) $ is given by:%
\begin{eqnarray}
\dsum\limits_{n=0}^{\infty }P_{p,q,n}P_{p,q,2n}z^{n} &=&\frac{%
2pz+2pq^{3}z^{3}}{D_{2}}  \notag \\
&=&\frac{2pz+2pq^{3}z^{3}}{1-4p\left( 2p^{2}+q\right) z-2q\left(
8p^{4}+6p^{2}q+q^{2}\right) z^{2}+4pq^{3}\left( 2p^{2}+q\right)
z^{3}+q^{6}z^{4}},  \TCItag{4.23}
\end{eqnarray}%
with $P_{p,q,n}P_{p,q,2n}=S_{n-1}(a_{1}+[-a_{2}])S_{2n-1}(e_{1}+[-e_{2}]).$
\end{corollary}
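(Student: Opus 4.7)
The plan is to obtain the stated generating function as an immediate consequence of equation (4.20) by recognizing the product $P_{p,q,n}P_{p,q,2n}$ as a specialization of the symmetric-function product on its left-hand side. The scheme closely mirrors the proofs of the preceding corollaries and theorems of this section. First I would invoke Proposition~1 to write $P_{p,q,n} = S_{n-1}(a_{1}+[-a_{2}])$, where the alphabet $A = \{a_{1},-a_{2}\}$ carries the Case~2 specialization $a_{1} - a_{2} = 2p$ and $a_{1}a_{2} = q$. Applying the same proposition to the alphabet $E = \{e_{1},-e_{2}\}$ with the identical substitutions $e_{1} - e_{2} = 2p$ and $e_{1}e_{2} = q$, and then replacing the index $n$ by $2n$, yields $P_{p,q,2n} = S_{2n-1}(e_{1}+[-e_{2}])$.

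Next, multiplying these two identities gives
\begin{equation*}
P_{p,q,n}\,P_{p,q,2n} \;=\; S_{n-1}(a_{1}+[-a_{2}])\, S_{2n-1}(e_{1}+[-e_{2}]),
\end{equation*}
which is precisely the summand appearing on the left-hand side of (4.20). Multiplying by $z^{n}$ and summing over $n \geq 0$ therefore gives
\begin{equation*}
\sum_{n=0}^{\infty} P_{p,q,n}\,P_{p,q,2n}\, z^{n} \;=\; \sum_{n=0}^{\infty} S_{n-1}(a_{1}+[-a_{2}])\, S_{2n-1}(e_{1}+[-e_{2}])\, z^{n} \;=\; \frac{2pz + 2pq^{3}z^{3}}{D_{2}},
\end{equation*}
where the second equality is exactly (4.20) and $D_{2}$ is the polynomial defined just after that equation.

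There is essentially no real obstacle here: the analytic content has already been packaged into Theorems~1--5 of Section~2 and in the Case~2 specializations (4.17)--(4.22), so the proof reduces to a one-line identification. The only point genuinely requiring verification is that the common double substitution $a_{1}-a_{2}=e_{1}-e_{2}=2p$ and $a_{1}a_{2}=e_{1}e_{2}=q$ is the right one for both factors simultaneously; this is immediate since $P_{p,q,n}$ and $P_{p,q,2n}$ are two indexed terms of the same $(p,q)$-Pell sequence, whose Binet representation fixes precisely these relations for both alphabets. No further algebra or manipulation beyond citing (4.20) is needed.
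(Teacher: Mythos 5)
Your proposal is correct and matches the paper's own justification: the paper likewise obtains this corollary by identifying $P_{p,q,n}P_{p,q,2n}=S_{n-1}(a_{1}+[-a_{2}])S_{2n-1}(e_{1}+[-e_{2}])$ under the Case~2 substitutions and reading off the right-hand side of (4.20). No gaps.
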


\begin{corollary}
Let $n$ be a natural number. Then we have the new generating function of the
product of $\left( p,q\right) $-Pell numbers $\left(
P_{p,q,n}P_{p,q,2n+1}\right) :$%
\begin{eqnarray}
\dsum\limits_{n=0}^{\infty }P_{p,q,n}P_{p,q,2n+1}z^{n} &=&\frac{\left(
4p^{2}+q\right) z-2pq^{2}z^{2}-q^{4}z^{3}}{D_{2}}  \notag \\
&=&\frac{\left( 4p^{2}+q\right) z-2pq^{2}z^{2}-q^{4}z^{3}}{1-4p\left(
2p^{2}+q\right) z-2q\left( 8p^{4}+6p^{2}q+q^{2}\right) z^{2}+4pq^{3}\left(
2p^{2}+q\right) z^{3}+q^{6}z^{4}},  \TCItag{4.24}
\end{eqnarray}%
with $P_{p,q,n}P_{p,q,2n+1}=S_{n-1}(a_{1}+[-a_{2}])S_{2n}(e_{1}+[-e_{2}]).$
\end{corollary}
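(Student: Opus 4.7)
The plan is to mirror the proofs of Corollaries 1 and 2 in Case 2 and reduce the statement to a direct citation of identity (4.21). The argument breaks into three short steps: express each factor $P_{p,q,\cdot}$ as a single symmetric function via Proposition 1, match both factors to a common alphabet specialization, and then read off the right-hand side from (4.21).

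First I would apply Proposition 1 to rewrite the Pell numbers symbolically as $P_{p,q,n}=S_{n-1}(e_{1}+[-e_{2}])$, and the index shift $n\mapsto 2n+1$ gives $P_{p,q,2n+1}=S_{2n}(e_{1}+[-e_{2}])$ in the same alphabet. To prepare for the application of (4.21), I would introduce a second alphabet $A=\{a_{1},-a_{2}\}$ with the same Pell specialization used in Case 2, namely $a_{1}-a_{2}=2p,\ a_{1}a_{2}=q$ (and identically for $e_{1},e_{2}$). Under this identification,
\[
P_{p,q,n}\,P_{p,q,2n+1}=S_{n-1}(a_{1}+[-a_{2}])\,S_{2n}(e_{1}+[-e_{2}]),
\]
which is exactly the coefficient of $z^{n}$ appearing on the left-hand side of (4.21).

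Finally, I would sum over $n$ and quote (4.21) verbatim:
\[
\sum_{n=0}^{\infty}P_{p,q,n}\,P_{p,q,2n+1}\,z^{n}
=\frac{(4p^{2}+q)z-2pq^{2}z^{2}-q^{4}z^{3}}{D_{2}},
\]
where $D_{2}=1-4p(2p^{2}+q)z-2q(8p^{4}+6p^{2}q+q^{2})z^{2}+4pq^{3}(2p^{2}+q)z^{3}+q^{6}z^{4}$ is the denominator displayed just before (4.23).

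There is no substantial obstacle: the nontrivial work, namely the divided-difference derivation that produced (4.5) and the polynomial algebra that condenses the Case 2 substitution into the compact form of $D_{2}$, has already been carried out in Section 2 and in the opening of Section 4. The only remaining hazards are clerical: choosing a single Pell specialization consistently for both alphabets $A$ and $E$ (otherwise the formula would describe a cross product of two different Pell-type sequences), and verifying that the shift $n\mapsto 2n+1$ inside $P_{p,q,n}=S_{n-1}(\cdot)$ yields $S_{2n}$ rather than $S_{2n-1}$ or $S_{2n+1}$ so that (4.21), not (4.20) or (4.22), is the right identity to apply.
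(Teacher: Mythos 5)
Your proposal is correct and follows exactly the route the paper intends: the corollary is stated as a direct consequence of (4.21) under the Case 2 Pell specialization $a_{1}-a_{2}=e_{1}-e_{2}=2p$, $a_{1}a_{2}=e_{1}e_{2}=q$, with $P_{p,q,n}=S_{n-1}(a_{1}+[-a_{2}])$ and $P_{p,q,2n+1}=S_{2n}(e_{1}+[-e_{2}])$, so the generating function is read off verbatim. Your attention to using the same specialization for both alphabets and to the index shift landing on $S_{2n}$ (hence (4.21) rather than (4.20) or (4.22)) covers the only points where one could go wrong.
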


\begin{theorem}
For $n\in 
\mathbb{N}
,$ the new generating function of the product of $\left( p,q\right) $-Pell
Lucas numbers $\left( Q_{p,q,n}Q_{p,q,2n}\right) $ is given by:%
\begin{equation}
\dsum\limits_{n=0}^{\infty }Q_{p,q,n}Q_{p,q,2n}z^{n}=\frac{4-12p\left(
2p^{2}+q\right) z-4q\left( 8p^{4}+4p^{2}q+q^{2}\right) z^{2}+4pq^{3}\left(
2p^{2}+q\right) z^{3}}{1-4p\left( 2p^{2}+q\right) z-2q\left(
8p^{4}+6p^{2}q+q^{2}\right) z^{2}+4pq^{3}\left( 2p^{2}+q\right)
z^{3}+q^{6}z^{4}}.  \tag{4.25}
\end{equation}
\end{theorem}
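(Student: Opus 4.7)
The strategy mirrors the proof of Theorem 8 (for $L_{p,q,n}L_{p,q,2n}$): express both factors through the symmetric-function identity $Q_{p,q,n}=2S_{n}(e_{1}+[-e_{2}])-2pS_{n-1}(e_{1}+[-e_{2}])$ from Proposition 1, view one factor in the alphabet $A$ and the other in the alphabet $E$ subject to the Case 2 conditions $a_{1}-a_{2}=e_{1}-e_{2}=2p$ and $a_{1}a_{2}=e_{1}e_{2}=q$, and then reduce the computation to the building blocks already established in (4.17)--(4.22).

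Concretely, I would first write
\begin{equation*}
Q_{p,q,n}Q_{p,q,2n}=\bigl(2S_{n}(A)-2pS_{n-1}(A)\bigr)\bigl(2S_{2n}(E)-2pS_{2n-1}(E)\bigr),
\end{equation*}
expand the product into the four terms $4S_{n}(A)S_{2n}(E)$, $-4pS_{n}(A)S_{2n-1}(E)$, $-4pS_{n-1}(A)S_{2n}(E)$, $4p^{2}S_{n-1}(A)S_{2n-1}(E)$, and sum against $z^{n}$. Using linearity of the generating operation, each of these four series is, under the Case 2 substitution, exactly one of the already proved identities: $4$ times (4.18), $-4p$ times (4.17), $-4p$ times (4.21), and $4p^{2}$ times (4.20). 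All four fractions share the common denominator $D_{2}=1-4p(2p^{2}+q)z-2q(8p^{4}+6p^{2}q+q^{2})z^{2}+4pq^{3}(2p^{2}+q)z^{3}+q^{6}z^{4}$, so combining them is straightforward in principle.

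The only real work is the polynomial bookkeeping in the numerator after clearing the common denominator $D_{2}$: one must collect
\begin{equation*}
4\bigl(1-2pqz-q^{2}(4p^{2}+q)z^{2}\bigr)-4p\bigl(4p^{2}z+4pq(2p^{2}+q)z^{2}\bigr)-4p\bigl((4p^{2}+q)z-2pq^{2}z^{2}-q^{4}z^{3}\bigr)+4p^{2}\bigl(2pz+2pq^{3}z^{3}\bigr),
\end{equation*}
and verify that the coefficients of $z^{0},z^{1},z^{2},z^{3}$ collapse to $4$, $-12p(2p^{2}+q)$, $-4q(8p^{4}+4p^{2}q+q^{2})$, and $4pq^{3}(2p^{2}+q)$ respectively. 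This step, while mechanical, is where sign or cross-term errors are most likely to creep in and is therefore the main obstacle; I would organise it by collecting one power of $z$ at a time and keeping track of the contributions from each of the four fractions separately before summing. Once that polynomial identity is checked, the stated formula for $\sum_{n\geq 0}Q_{p,q,n}Q_{p,q,2n}z^{n}$ follows immediately.
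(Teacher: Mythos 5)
Your strategy is exactly the paper's own proof: expand $Q_{p,q,n}Q_{p,q,2n}$ via Proposition 1 into the four cross terms, match them to $4\times(4.18)$, $-4p\times(4.17)$, $-4p\times(4.21)$ and $4p^{2}\times(4.20)$ under the Case 2 substitution, and combine over the common denominator $D_{2}$. So as a plan there is nothing to criticise.

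However, the one step you defer --- checking that the numerator coefficients ``collapse'' to the stated values --- does not actually succeed for the coefficient of $z^{2}$, and this is worth flagging because the paper's own proof makes the same slip. Collecting the $z^{2}$ contributions from your four bracketed pieces gives
\begin{equation*}
-4q^{2}\left(4p^{2}+q\right)-16p^{2}q\left(2p^{2}+q\right)+8p^{2}q^{2}
=-32p^{4}q-24p^{2}q^{2}-4q^{3}
=-4q\left(8p^{4}+6p^{2}q+q^{2}\right),
\end{equation*}
not $-4q\left(8p^{4}+4p^{2}q+q^{2}\right)$ as stated in (4.25). A numerical sanity check confirms this: at $p=q=1$ the series is $Q_{0}^{2}+Q_{1}Q_{2}z+Q_{2}Q_{4}z^{2}+\cdots=4+12z+204z^{2}+\cdots$, and multiplying by $D_{2}=1-12z-30z^{2}+12z^{3}+z^{4}$ gives a numerator $4-36z-60z^{2}+12z^{3}$, i.e.\ the $z^{2}$ coefficient is $-60=-4\cdot 15$, matching $-4q\left(8p^{4}+6p^{2}q+q^{2}\right)$ rather than the $-52$ predicted by the theorem (and by Table 6). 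So if you carry out the bookkeeping you describe, you will not reach the displayed formula; you will reach the corrected one. Your proof should either state the corrected numerator or explicitly note the discrepancy --- asserting that the coefficients collapse to the printed values, as your write-up currently does, leaves the argument resting on a polynomial identity that is false.
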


\begin{proof}
We have%
\begin{eqnarray*}
\dsum\limits_{n=0}^{\infty }Q_{p,q,n}Q_{p,q,2n}z^{n}
&=&\dsum\limits_{n=0}^{\infty }\left( 
\begin{array}{c}
\left( 2S_{n}(a_{1}+[-a_{2}])-2pS_{n-1}(a_{1}+[-a_{2}])\right) \\ 
\times \left( 2S_{2n}(e_{1}+[-e_{2}])-2pS_{2n-1}(e_{1}+[-e_{2}])\right)%
\end{array}%
\right) z^{n} \\
&=&4\dsum\limits_{n=0}^{\infty
}S_{n}(a_{1}+[-a_{2}])S_{2n}(e_{1}+[-e_{2}])z^{n} \\
&&-4p\dsum\limits_{n=0}^{\infty
}S_{n}(a_{1}+[-a_{2}])S_{2n-1}(e_{1}+[-e_{2}])z^{n} \\
&&-4p\dsum\limits_{n=0}^{\infty
}S_{n-1}(a_{1}+[-a_{2}])S_{2n}(e_{1}+[-e_{2}])z^{n} \\
&&+4p^{2}\dsum\limits_{n=0}^{\infty
}S_{n-1}(a_{1}+[-a_{2}])S_{2n-1}(e_{1}+[-e_{2}])z^{n}.
\end{eqnarray*}

Using the relationships $\left( 4.17\right) ,$ $\left( 4.18\right) ,$ $%
\left( 4.20\right) $ and $\left( 4.21\right) $, we obtain%
\begin{eqnarray*}
\dsum\limits_{n=0}^{\infty }Q_{p,q,n}Q_{p,q,2n}z^{n} &=&\frac{4\left(
1-2pqz-q^{2}\left( 4p^{2}+q\right) z^{2}\right) }{D_{2}}-\frac{4p\left(
4p^{2}z+4pq\left( 2p^{2}+q\right) z^{2}\right) }{D_{2}} \\
&&-\frac{4p\left( \left( 4p^{2}+q\right) z-2pq^{2}z^{2}-q^{4}z^{3}\right) }{%
D_{2}}+\frac{4p^{2}\left( 2pz+2pq^{3}z^{3}\right) }{D_{2}} \\
&=&\frac{4-12p\left( 2p^{2}+q\right) z-4q\left( 8p^{4}+4p^{2}q+q^{2}\right)
z^{2}+4pq^{3}\left( 2p^{2}+q\right) z^{3}}{D_{2}} \\
&=&\frac{4-12p\left( 2p^{2}+q\right) z-4q\left( 8p^{4}+4p^{2}q+q^{2}\right)
z^{2}+4pq^{3}\left( 2p^{2}+q\right) z^{3}}{1-4p\left( 2p^{2}+q\right)
z-2q\left( 8p^{4}+6p^{2}q+q^{2}\right) z^{2}+4pq^{3}\left( 2p^{2}+q\right)
z^{3}+q^{6}z^{4}}.
\end{eqnarray*}

So, the proof is completed.
\end{proof}

\begin{theorem}
Let $n$ be a natural number. Then we have the new generating function of the
product of $\left( p,q\right) $-Pell Lucas numbers $\left(
Q_{p,q,n}Q_{p,q,2n+1}\right) $:%
\begin{equation}
\dsum\limits_{n=0}^{\infty }Q_{p,q,n}Q_{p,q,2n+1}z^{n}=\frac{4p-4p^{2}\left(
4p^{2}+q\right) z+12pq^{2}\left( 2p^{2}+q\right) z^{2}-4p^{2}q^{4}z^{3}}{%
1-4p\left( 2p^{2}+q\right) z-2q\left( 8p^{4}+6p^{2}q+q^{2}\right)
z^{2}+4pq^{3}\left( 2p^{2}+q\right) z^{3}+q^{6}z^{4}}.  \tag{4.26}
\end{equation}
\end{theorem}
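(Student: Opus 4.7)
The plan is to mirror the proof of Theorem~9 (the $Q_{p,q,n}Q_{p,q,2n}$ case) almost verbatim, only with the index shift $2n \mapsto 2n+1$, and then feed in the appropriate four generating-function identities from (4.18)--(4.22). The starting point is the symmetric-function representation
\[
Q_{p,q,n}=2S_{n}(a_{1}+[-a_{2}])-2pS_{n-1}(a_{1}+[-a_{2}]),\qquad
Q_{p,q,2n+1}=2S_{2n+1}(e_{1}+[-e_{2}])-2pS_{2n}(e_{1}+[-e_{2}]),
\]
from Proposition~1, specialized to the alphabets determined by the Case~2 conditions $a_{1}-a_{2}=e_{1}-e_{2}=2p$, $a_{1}a_{2}=e_{1}e_{2}=q$.

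First I would take the product $Q_{p,q,n}Q_{p,q,2n+1}$ and expand it into the four cross terms
\[
4\,S_{n}S_{2n+1}\;-\;4p\,S_{n}S_{2n}\;-\;4p\,S_{n-1}S_{2n+1}\;+\;4p^{2}\,S_{n-1}S_{2n},
\]
with the obvious abbreviations. Multiplying by $z^{n}$ and summing over $n\geq 0$ distributes the summation across the four terms; by linearity, $\sum_{n\geq 0}Q_{p,q,n}Q_{p,q,2n+1}z^{n}$ becomes a linear combination with coefficients $4,-4p,-4p,4p^{2}$ of the four generating functions on the left-hand sides of (4.19), (4.18), (4.22), and (4.21) respectively.

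Next I would substitute these four closed forms. All four have the common denominator $D_{2}=1-4p(2p^{2}+q)z-2q(8p^{4}+6p^{2}q+q^{2})z^{2}+4pq^{3}(2p^{2}+q)z^{3}+q^{6}z^{4}$, so the combined expression is
\[
\frac{4(2p+2pq^{3}z^{2})-4p\bigl(1-2pqz-q^{2}(4p^{2}+q)z^{2}\bigr)-4p\bigl(4p(2p^{2}+q)z-4p^{2}q^{2}z^{2}\bigr)+4p^{2}\bigl((4p^{2}+q)z-2pq^{2}z^{2}-q^{4}z^{3}\bigr)}{D_{2}}.
\]

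The main (indeed, only) obstacle is the bookkeeping in the numerator: I would collect powers of $z$ carefully. The constant term is $8p-4p=4p$; the $z^{1}$ coefficient combines $8p^{2}q$ from the second bracket, $-16p^{2}(2p^{2}+q)$ from the third, and $4p^{2}(4p^{2}+q)$ from the fourth, yielding $-4p^{2}(4p^{2}+q)$ after cancellation; the $z^{2}$ coefficient assembles $8pq^{3}$, $4pq^{2}(4p^{2}+q)$, $16p^{3}q^{2}$, and $-8p^{3}q^{2}$ to give $12pq^{2}(2p^{2}+q)$; and the $z^{3}$ coefficient is simply $-4p^{2}q^{4}$ from the last term. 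This reproduces exactly the numerator in (4.26), completing the proof.
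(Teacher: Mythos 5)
Your proposal is correct and follows essentially the same route as the paper's own proof: expanding $Q_{p,q,n}Q_{p,q,2n+1}$ into the four cross terms with coefficients $4,-4p,-4p,4p^{2}$, substituting the closed forms (4.18), (4.19), (4.21), (4.22) over the common denominator $D_{2}$, and collecting powers of $z$. Your coefficient bookkeeping checks out and reproduces the numerator of (4.26) exactly.
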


\begin{proof}
We have%
\begin{eqnarray*}
\dsum\limits_{n=0}^{\infty }Q_{p,q,n}Q_{p,q,2n+1}z^{n}
&=&\dsum\limits_{n=0}^{\infty }\left( 
\begin{array}{c}
\left( 2S_{n}(a_{1}+[-a_{2}])-2pS_{n-1}(a_{1}+[-a_{2}])\right) \\ 
\times \left( 2S_{2n+1}(e_{1}+[-e_{2}])-2pS_{2n}(e_{1}+[-e_{2}])\right)%
\end{array}%
\right) z^{n} \\
&=&4\dsum\limits_{n=0}^{\infty
}S_{n}(a_{1}+[-a_{2}])S_{2n+1}(e_{1}+[-e_{2}])z^{n} \\
&&-4p\dsum\limits_{n=0}^{\infty
}S_{n}(a_{1}+[-a_{2}])S_{2n}(e_{1}+[-e_{2}])z^{n} \\
&&-4p\dsum\limits_{n=0}^{\infty
}S_{n-1}(a_{1}+[-a_{2}])S_{2n+1}(e_{1}+[-e_{2}])z^{n} \\
&&+4p^{2}\dsum\limits_{n=0}^{\infty
}S_{n-1}(a_{1}+[-a_{2}])S_{2n}(e_{1}+[-e_{2}])z^{n}.
\end{eqnarray*}

Using the relationships $\left( 4.18\right) ,$ $\left( 4.19\right) ,$ $%
\left( 4.21\right) $ and $\left( 4.22\right) $, we obtain%
\begin{eqnarray*}
\dsum\limits_{n=0}^{\infty }Q_{p,q,n}Q_{p,q,2n+1}z^{n} &=&\frac{4\left(
2p+2pq^{3}z^{2}\right) }{D_{2}}-\frac{4p\left( 1-2pqz-q^{2}\left(
4p^{2}+q\right) z^{2}\right) }{D_{2}} \\
&&-\frac{4p\left( 4p\left( 2p^{2}+q\right) z-4p^{2}q^{2}z^{2}\right) }{D_{2}}%
+\frac{4p^{2}\left( \left( 4p^{2}+q\right) z-2pq^{2}z^{2}-q^{4}z^{3}\right) 
}{D_{2}} \\
&=&\frac{4p-4p^{2}\left( 4p^{2}+q\right) z+12pq^{2}\left( 2p^{2}+q\right)
z^{2}-4p^{2}q^{4}z^{3}}{D_{2}} \\
&=&\frac{4p-4p^{2}\left( 4p^{2}+q\right) z+12pq^{2}\left( 2p^{2}+q\right)
z^{2}-4p^{2}q^{4}z^{3}}{1-4p\left( 2p^{2}+q\right) z-2q\left(
8p^{4}+6p^{2}q+q^{2}\right) z^{2}+4pq^{3}\left( 2p^{2}+q\right)
z^{3}+q^{6}z^{4}}.
\end{eqnarray*}

So, the proof is completed.
\end{proof}

\begin{corollary}
Taking $p=1$ and $q=k$ in Eqs. (4.23)-(4.26) gives the following new
generating functions:%
\begin{eqnarray*}
\dsum\limits_{n=0}^{\infty }P_{k,n}P_{k,2n}z^{n} &=&\frac{2z+2k^{3}z^{3}}{%
1-4\left( k+2\right) z-2k\left( k^{2}+6k+8\right) z^{2}+4k^{3}\left(
k+2\right) z^{3}+k^{6}z^{4}}. \\
\dsum\limits_{n=0}^{\infty }P_{k,n}P_{k,2n+1}z^{n} &=&\frac{\left(
k+4\right) z-2k^{2}z^{2}-k^{4}z^{3}}{1-4\left( k+2\right) z-2k\left(
k^{2}+6k+8\right) z^{2}+4k^{3}\left( k+2\right) z^{3}+k^{6}z^{4}}. \\
\dsum\limits_{n=0}^{\infty }Q_{k,n}Q_{k,2n}z^{n} &=&\frac{4-12\left(
k+2\right) z-4k\left( k^{2}+4k+8\right) z^{2}+4k^{3}\left( k+2\right) z^{3}}{%
1-4\left( k+2\right) z-2k\left( k^{2}+6k+8\right) z^{2}+4k^{3}\left(
k+2\right) z^{3}+k^{6}z^{4}}. \\
\dsum\limits_{n=0}^{\infty }Q_{k,n}Q_{k,2n+1}z^{n} &=&\frac{4-4\left(
k+4\right) z+12k^{2}\left( k+2\right) z^{2}-4k^{4}z^{3}}{1-4\left(
k+2\right) z-2k\left( k^{2}+6k+8\right) z^{2}+4k^{3}\left( k+2\right)
z^{3}+k^{6}z^{4}}.
\end{eqnarray*}
\end{corollary}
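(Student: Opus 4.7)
The plan is to treat this corollary as a direct specialization of the four preceding identities (4.23)--(4.26), which are stated for arbitrary parameters $p$ and $q$. First I would invoke Table~1 to note that setting $p=1$ and $q=k$ turns the $(p,q)$-Pell recurrence into the $k$-Pell recurrence with identical initial data, and likewise converts the $(p,q)$-Pell Lucas recurrence into the $k$-Pell Lucas one; hence each left-hand side $\sum P_{p,q,n}P_{p,q,2n}z^n$, $\sum P_{p,q,n}P_{p,q,2n+1}z^n$, $\sum Q_{p,q,n}Q_{p,q,2n}z^n$, $\sum Q_{p,q,n}Q_{p,q,2n+1}z^n$ specializes termwise to the claimed $k$-series on the right.

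Next I would compute the shared denominator $D_2$ under $p=1$, $q=k$ once and for all. Collecting terms yields $1 - 4(k+2)z - 2k(k^2+6k+8)z^2 + 4k^3(k+2)z^3 + k^6 z^4$, which matches the denominator displayed in all four target identities. After that it remains to substitute $p=1$, $q=k$ into each numerator separately: (4.23) gives $2z + 2k^3 z^3$; (4.24) gives $(k+4)z - 2k^2 z^2 - k^4 z^3$; (4.25) gives $4 - 12(k+2)z - 4k(k^2+4k+8)z^2 + 4k^3(k+2)z^3$; and (4.26) gives $4 - 4(k+4)z + 12k^2(k+2)z^2 - 4k^4 z^3$. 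Each of these matches the numerator stated in the corollary.

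There is no substantive obstacle here: the corollary is a routine specialization of results already established, and the only place a slip could occur is in collecting coefficients inside expressions such as $4p(2p^2+q)$, $2q(8p^4+6p^2q+q^2)$, or $4q(8p^4+4p^2q+q^2)$ at $p=1$, $q=k$. The cleanest presentation is therefore to record $D_2|_{p=1,\,q=k}$ once, then display the four specialized numerators in sequence and conclude by writing out the four generating function identities, appealing to the identification $P_{p,q,n}\mapsto P_{k,n}$ and $Q_{p,q,n}\mapsto Q_{k,n}$ supplied by Table~1.
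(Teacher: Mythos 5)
Your proposal is correct and matches the paper's (implicit) treatment exactly: the corollary is presented as a direct substitution of $p=1$, $q=k$ into Eqs.\ (4.23)--(4.26), and your computations of the specialized denominator $D_2$ and the four numerators all check out, as does the identification of the $(p,q)$-Pell and $(p,q)$-Pell Lucas sequences with the $k$-Pell and $k$-Pell Lucas sequences via Table~1.
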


\begin{itemize}
\item Put $k=1$ in the Corollary 6, we obtain the following table:%
\begin{equation*}
\begin{tabular}{|c|c|}
\hline
$\text{Coefficient of }z^{n}$ & Generating function \\ \hline
$P_{n}P_{2n}$ & $\frac{2z+2z^{3}}{1-12z-30z^{2}+12z^{3}+z^{4}}$ \\ \hline
$P_{n}P_{2n+1}$ & $\frac{5z-2z^{2}-z^{3}}{1-12z-30z^{2}+12z^{3}+z^{4}}$ \\ 
\hline
$Q_{n}Q_{2n}$ & $\frac{4-36z-52z^{2}+12z^{3}}{1-12z-30z^{2}+12z^{3}+z^{4}}$
\\ \hline
$Q_{n}Q_{2n+1}$ & $\frac{4-20z+36z^{2}-4z^{3}}{1-12z-30z^{2}+12z^{3}+z^{4}}$
\\ \hline
\end{tabular}%
\end{equation*}
\end{itemize}

\begin{center}
\textbf{Table 6.} A new generating functions of the products of some numbers.
\end{center}

\textbf{Case 3. }Let us now consider the following conditions for Eqs.
(4.1)-(4.6)%
\begin{equation*}
\left\{ 
\begin{array}{c}
a_{1}-a_{2}=p \\ 
a_{1}a_{2}=2q%
\end{array}%
\right. \text{ and }\left\{ 
\begin{array}{c}
e_{1}-e_{2}=p \\ 
e_{1}e_{2}=2q%
\end{array}%
\right. .
\end{equation*}

Then it give%
\begin{equation}
\sum\limits_{n=0}^{\infty }S_{n}\left( a_{1}+\left[ -a_{2}\right] \right)
S_{2n-1}\left( e_{1}+\left[ -e_{2}\right] \right) z^{n}=\frac{%
p^{2}z+2pq\left( p^{2}+4q\right) z^{2}}{D_{3}},  \tag{4.27}
\end{equation}%
\begin{equation}
\sum\limits_{n=0}^{\infty }S_{n}\left( a_{1}+\left[ -a_{2}\right] \right)
S_{2n}\left( e_{1}+\left[ -e_{2}\right] \right) z^{n}=\frac{%
1-2pqz-4q^{2}\left( p^{2}+2q\right) z^{2}}{D_{3}},  \tag{4.28}
\end{equation}%
\begin{equation}
\sum\limits_{n=0}^{\infty }S_{n}\left( a_{1}+\left[ -a_{2}\right] \right)
S_{2n+1}\left( e_{1}+\left[ -e_{2}\right] \right) z^{n}=\frac{p+8pq^{3}z^{2}%
}{D_{3}},  \tag{4.29}
\end{equation}%
\begin{equation}
\sum\limits_{n=0}^{\infty }S_{n-1}\left( a_{1}+\left[ -a_{2}\right] \right)
S_{2n-1}\left( e_{1}+\left[ -e_{2}\right] \right) z^{n}=\frac{pz+8pq^{3}z^{3}%
}{D_{3}},  \tag{4.30}
\end{equation}%
\begin{equation}
\sum\limits_{n=0}^{\infty }S_{n-1}\left( a_{1}+\left[ -a_{2}\right] \right)
S_{2n}\left( e_{1}+\left[ -e_{2}\right] \right) z^{n}=\frac{\left(
p^{2}+2q\right) z-4pq^{2}z^{2}-16q^{4}z^{3}}{D_{3}},  \tag{4.31}
\end{equation}%
\begin{equation}
\sum\limits_{n=0}^{\infty }S_{n-1}\left( a_{1}+\left[ -a_{2}\right] \right)
S_{2n+1}\left( e_{1}+\left[ -e_{2}\right] \right) z^{n}=\frac{p\left(
p^{2}+4q\right) z-4p^{2}q^{2}z^{2}}{D_{3}},  \tag{4.32}
\end{equation}%
with 
\begin{equation*}
D_{3}=1-p\left( p^{2}+4q\right) z-2q\left( p^{4}+6p^{2}q+8q^{2}\right)
z^{2}+8pq^{3}\left( p^{2}+4q\right) z^{3}+64q^{6}z^{4}.
\end{equation*}%
And we deduce the following corollaries and theorems.

\begin{corollary}
For $n\in 
\mathbb{N}
$, the new generating function of the product of $\left( p,q\right) $%
-Jacobsthal numbers $\left( J_{p,q,n}J_{p,q,2n}\right) $ is given by:%
\begin{eqnarray}
\dsum\limits_{n=0}^{\infty }J_{p,q,n}J_{p,q,2n}z^{n} &=&\frac{pz+8pq^{3}z^{3}%
}{D_{3}}  \notag \\
&=&\frac{pz+8pq^{3}z^{3}}{1-p\left( p^{2}+4q\right) z-2q\left(
p^{4}+6p^{2}q+8q^{2}\right) z^{2}+8pq^{3}\left( p^{2}+4q\right)
z^{3}+64q^{6}z^{4}},  \TCItag{4.33}
\end{eqnarray}%
with $J_{p,q,n}J_{p,q,2n}=S_{n-1}(a_{1}+[-a_{2}])S_{2n-1}(e_{1}+[-e_{2}]).$
\end{corollary}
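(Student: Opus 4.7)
The plan is to recognize $\sum_{n=0}^{\infty} J_{p,q,n} J_{p,q,2n} z^n$ as precisely the specialization of the general identity (4.4) to the Case 3 substitutions. Since equation (4.30) was already obtained by carrying out exactly those substitutions in (4.4), the corollary reduces to checking that the product $J_{p,q,n} J_{p,q,2n}$ can be written as $S_{n-1}(a_1+[-a_2])\,S_{2n-1}(e_1+[-e_2])$ under the Case 3 choice of alphabets.

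The first step is to invoke Proposition 1, which states that $J_{p,q,m} = S_{m-1}(e_1+[-e_2])$ whenever the alphabet encodes the Jacobsthal recurrence, i.e.\ whenever $e_1 - e_2 = p$ and $e_1 e_2 = 2q$. Applying this to the alphabet $A = \{a_1,-a_2\}$ with $a_1 - a_2 = p$ and $a_1 a_2 = 2q$, I obtain $J_{p,q,n} = S_{n-1}(a_1+[-a_2])$. Applying it to $E = \{e_1,-e_2\}$ with $e_1 - e_2 = p$ and $e_1 e_2 = 2q$, and substituting $m = 2n$, I obtain $J_{p,q,2n} = S_{2n-1}(e_1+[-e_2])$. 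Multiplying the two identifications yields $J_{p,q,n} J_{p,q,2n} = S_{n-1}(a_1+[-a_2])\,S_{2n-1}(e_1+[-e_2])$, exactly as asserted.

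Summing in $z$, the generating function on the left becomes the one handled by equation (4.30). That equation was obtained earlier by inserting $a_1 - a_2 = p$, $a_1 a_2 = 2q$, $e_1 - e_2 = p$, $e_1 e_2 = 2q$ into the general formula (4.4); the numerator $(e_1-e_2)z + a_1 a_2 e_1^{2} e_2^{2}(e_1-e_2) z^3$ collapses to $pz + (2q)(2q)^{2} p\, z^3 = pz + 8pq^{3} z^3$, while the denominator $P(z) = (1-a_1 e_1^{2}z)(1-a_1 e_2^{2}z)(1+a_2 e_1^{2}z)(1+a_2 e_2^{2}z)$ expands into the polynomial $D_3$ displayed just above equations (4.27)--(4.32). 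This matches the right-hand side claimed in the corollary.

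The argument contains no genuine obstacle; the entire content lies in (i) matching the Jacobsthal parameters to the Case 3 substitutions via Proposition 1 and (ii) invoking the previously established specialization (4.30). Were it not for this pre-existing machinery, the work would sit in proving (4.4) itself, but that was already established in Section 2 via the divided-difference identity for $\delta_{e_1 e_2}^{2}$ applied to $1/\sum_{n \geq 0} S_n(-A) e_1^{2n} z^n$. The only point worth double-checking is that the same parameters $(p,q)$ can be used consistently for both alphabets without conflict, which is automatic since Proposition 1 fixes the Jacobsthal representation in terms of the pair $(e_1-e_2,\,e_1 e_2)$ alone.
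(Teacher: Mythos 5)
Your proposal is correct and follows essentially the same route as the paper: Corollary 7 is obtained there precisely by identifying $J_{p,q,n}J_{p,q,2n}=S_{n-1}(a_{1}+[-a_{2}])S_{2n-1}(e_{1}+[-e_{2}])$ via Proposition 1 under the Case 3 substitutions $a_{1}-a_{2}=e_{1}-e_{2}=p$, $a_{1}a_{2}=e_{1}e_{2}=2q$, and then reading off the specialization (4.30) of the general identity (4.4). Your verification of the numerator $pz+8pq^{3}z^{3}$ and of the denominator $D_{3}$ matches the paper's computation exactly.
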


\begin{corollary}
Let $n$ be a natural number. Then we have the new generating function of the
product of $\left( p,q\right) $-Jacobsthal numbers $\left(
J_{p,q,n}J_{p,q,2n+1}\right) $:%
\begin{eqnarray}
\dsum\limits_{n=0}^{\infty }J_{p,q,n}J_{p,q,2n+1}z^{n} &=&\frac{\left(
p^{2}+2q\right) z-4pq^{2}z^{2}-16q^{4}z^{3}}{D_{3}}  \notag \\
&=&\frac{\left( p^{2}+2q\right) z-4pq^{2}z^{2}-16q^{4}z^{3}}{1-p\left(
p^{2}+4q\right) z-2q\left( p^{4}+6p^{2}q+8q^{2}\right) z^{2}+8pq^{3}\left(
p^{2}+4q\right) z^{3}+64q^{6}z^{4}},  \TCItag{4.34}
\end{eqnarray}%
with $J_{p,q,n}J_{p,q,2n+1}=S_{n-1}(a_{1}+[-a_{2}])S_{2n}(e_{1}+[-e_{2}]).$
\end{corollary}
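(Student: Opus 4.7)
The plan is to reduce the claim directly to equation (4.31), which is already established in the Case 3 block of Section 4. The strategy mirrors the earlier corollaries (e.g. the proofs of Corollary 1, Corollary 2, Corollary 4, Corollary 5, and the preceding Corollary 7 in the same case), and requires no new calculation beyond verifying that the symmetric-function identifications for the Jacobsthal sequence match the Case 3 specialization of the two alphabets.

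First I would recall from Proposition 1 that $J_{p,q,n}=S_{n-1}(e_{1}+[-e_{2}])$ with $e_{1,2}=\frac{p\pm\sqrt{p^{2}+8q}}{2}$, which is exactly the Case 3 specialization: the roots of the Jacobsthal characteristic polynomial $x^{2}-px-2q=0$ satisfy $e_{1}-e_{2}=p$ and $e_{1}e_{2}=2q$ under the convention (used throughout the paper) that the alphabet $\{e_{1},-e_{2}\}$ encodes these roots. By the same identification applied to the second alphabet, setting $a_{1}-a_{2}=p$ and $a_{1}a_{2}=2q$, one obtains $J_{p,q,n}=S_{n-1}(a_{1}+[-a_{2}])$. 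Replacing $n$ by $2n+1$ in the Binet-type identity for the $e$-alphabet gives $J_{p,q,2n+1}=S_{2n}(e_{1}+[-e_{2}])$.

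Next I would combine these two representations to write
\begin{equation*}
J_{p,q,n}\,J_{p,q,2n+1}=S_{n-1}(a_{1}+[-a_{2}])\,S_{2n}(e_{1}+[-e_{2}]),
\end{equation*}
exactly as asserted in the statement. Summing against $z^{n}$ and invoking equation (4.31) produces
\begin{equation*}
\sum_{n=0}^{\infty}J_{p,q,n}J_{p,q,2n+1}z^{n}=\frac{(p^{2}+2q)z-4pq^{2}z^{2}-16q^{4}z^{3}}{D_{3}},
\end{equation*}
where $D_{3}=1-p(p^{2}+4q)z-2q(p^{4}+6p^{2}q+8q^{2})z^{2}+8pq^{3}(p^{2}+4q)z^{3}+64q^{6}z^{4}$, which is precisely the claimed identity.

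The only real obstacle is bookkeeping: one must check that the Case 3 substitution $a_{1}-a_{2}=p,\ a_{1}a_{2}=2q,\ e_{1}-e_{2}=p,\ e_{1}e_{2}=2q$ is the correct one for Jacobsthal (and not for, say, Pell or Fibonacci, which correspond to Cases 2 and 1 respectively), and that the shift of index from $n$ to $2n+1$ on the $J_{p,q,\cdot}=S_{\cdot-1}$ relation produces $S_{2n}$ rather than $S_{2n-1}$ or $S_{2n+1}$. Once these two identifications are fixed, the conclusion is an immediate read-off from (4.31), so no further analytic work is needed.
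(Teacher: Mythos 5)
Your proposal is correct and is exactly the paper's own (implicit) argument: the paper deduces Corollary~8 directly from equation (4.31) via the identification $J_{p,q,n}J_{p,q,2n+1}=S_{n-1}(a_{1}+[-a_{2}])S_{2n}(e_{1}+[-e_{2}])$ under the Case~3 substitution $a_{1}-a_{2}=e_{1}-e_{2}=p$, $a_{1}a_{2}=e_{1}e_{2}=2q$, just as you describe. Your bookkeeping checks (that the index shift $n\mapsto 2n+1$ turns $S_{n-1}$ into $S_{2n}$, and that Case~3 is the Jacobsthal specialization) are the only content of the deduction, and you have them right.
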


\begin{theorem}
For $n\in 
\mathbb{N}
$, the new generating function of the product of $\left( p,q\right) $%
-Jacobsthal Lucas numbers $\left( j_{p,q,n}j_{p,q,2n}\right) $ is given by:%
\begin{equation}
\dsum\limits_{n=0}^{\infty }j_{p,q,n}j_{p,q,2n}z^{n}=\frac{4-3p\left(
p^{2}+4q\right) z-4q\left( p^{4}+6p^{2}q+8q^{2}\right) z^{2}+8pq^{3}\left(
p^{2}+4q\right) z^{3}}{1-p\left( p^{2}+4q\right) z-2q\left(
p^{4}+6p^{2}q+8q^{2}\right) z^{2}+8pq^{3}\left( p^{2}+4q\right)
z^{3}+64q^{6}z^{4}}.  \tag{4.35}
\end{equation}
\end{theorem}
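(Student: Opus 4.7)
The plan is to mimic exactly the argument used for $L_{p,q,n}L_{p,q,2n}$ in Theorem 8 and for $Q_{p,q,n}Q_{p,q,2n}$ in Theorem 12, replacing the two incoming series by the Case 3 identities (4.27)--(4.32). First I would recall from Proposition 1 the representation
\[
j_{p,q,n}=2S_{n}(e_{1}+[-e_{2}])-pS_{n-1}(e_{1}+[-e_{2}]),
\]
apply it to both indices $n$ and $2n$, and expand the product bilinearly:
\[
j_{p,q,n}\,j_{p,q,2n}=4S_{n}S_{2n}-2p\,S_{n}S_{2n-1}-2p\,S_{n-1}S_{2n}+p^{2}S_{n-1}S_{2n-1},
\]
where each $S_{\bullet}$ is taken at the alphabet $e_{1}+[-e_{2}]$ (resp.\ $a_{1}+[-a_{2}]$ with the identifications of Case 3). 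Summing against $z^{n}$ reduces the left-hand side of (4.35) to a linear combination of the four generating functions appearing in (4.28), (4.27), (4.31) and (4.30).

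Next I would invoke precisely those four identities, all of which already share the common denominator $D_{3}=1-p(p^{2}+4q)z-2q(p^{4}+6p^{2}q+8q^{2})z^{2}+8pq^{3}(p^{2}+4q)z^{3}+64q^{6}z^{4}$. Thus no further factorization is needed: the sum collapses to a single rational function with denominator $D_{3}$, and only the numerator requires attention. Collecting by powers of $z$, the computation reads
\begin{align*}
N(z)&=4\bigl(1-2pqz-4q^{2}(p^{2}+2q)z^{2}\bigr)-2p\bigl(p^{2}z+2pq(p^{2}+4q)z^{2}\bigr)\\
&\qquad-2p\bigl((p^{2}+2q)z-4pq^{2}z^{2}-16q^{4}z^{3}\bigr)+p^{2}\bigl(pz+8pq^{3}z^{3}\bigr).
\end{align*}

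Then I would simply collect coefficients: the constant term is $4$; the $z$ coefficient gives $-8pq-2p^{3}-2p^{3}-4pq+p^{3}=-3p(p^{2}+4q)$; the $z^{2}$ coefficient gives $-16q^{2}(p^{2}+2q)-4p^{2}q(p^{2}+4q)+8p^{2}q^{2}=-4q(p^{4}+6p^{2}q+8q^{2})$; and the $z^{3}$ coefficient yields $32pq^{4}+8p^{3}q^{3}=8pq^{3}(p^{2}+4q)$. Substituting these into $N(z)/D_{3}$ reproduces (4.35). The main obstacle is purely bookkeeping in the $z^{2}$ column, where three separate contributions must be combined and the identity $16p^{2}q^{2}+16p^{2}q^{2}-8p^{2}q^{2}=24p^{2}q^{2}$ is easy to miscount; beyond that the argument is entirely mechanical and parallels Theorems 8 and 12 verbatim.
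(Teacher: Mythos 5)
Your proposal is correct and follows essentially the same route as the paper: expand $j_{p,q,n}j_{p,q,2n}$ bilinearly via $j_{p,q,n}=2S_{n}-pS_{n-1}$, invoke the four Case~3 identities (4.27), (4.28), (4.30), (4.31) over the common denominator $D_{3}$, and collect the numerator coefficients, all of which you compute correctly. No further changes are needed.
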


\begin{proof}
We have%
\begin{eqnarray*}
\dsum\limits_{n=0}^{\infty }j_{p,q,n}j_{p,q,2n}z^{n}
&=&\dsum\limits_{n=0}^{\infty }\left( 
\begin{array}{c}
\left( 2S_{n}(a_{1}+[-a_{2}])-pS_{n-1}(a_{1}+[-a_{2}])\right) \\ 
\times \left( 2S_{2n}(e_{1}+[-e_{2}])-pS_{2n-1}(e_{1}+[-e_{2}])\right)%
\end{array}%
\right) z^{n} \\
&=&4\dsum\limits_{n=0}^{\infty
}S_{n}(a_{1}+[-a_{2}])S_{2n}(e_{1}+[-e_{2}])z^{n} \\
&&-2p\dsum\limits_{n=0}^{\infty
}S_{n}(a_{1}+[-a_{2}])S_{2n-1}(e_{1}+[-e_{2}])z^{n} \\
&&-2p\dsum\limits_{n=0}^{\infty
}S_{n-1}(a_{1}+[-a_{2}])S_{2n}(e_{1}+[-e_{2}])z^{n} \\
&&+p^{2}\dsum\limits_{n=0}^{\infty
}S_{n-1}(a_{1}+[-a_{2}])S_{2n-1}(e_{1}+[-e_{2}])z^{n}.
\end{eqnarray*}

Using the relationships $\left( 4.27\right) ,$ $\left( 4.28\right) ,$ $%
\left( 4.30\right) $ and $\left( 4.31\right) $, we obtain%
\begin{eqnarray*}
\dsum\limits_{n=0}^{\infty }j_{p,q,n}j_{p,q,2n}z^{n} &=&\frac{4\left(
1-2pqz-4q^{2}\left( p^{2}+2q\right) z^{2}\right) }{D_{3}}-\frac{2p\left(
p^{2}z+2pq\left( p^{2}+4q\right) z^{2}\right) }{D_{3}} \\
&&-\frac{2p\left( \left( p^{2}+2q\right) z-4pq^{2}z^{2}-16q^{4}z^{3}\right) 
}{D_{3}}+\frac{p^{2}\left( pz+8pq^{3}z^{3}\right) }{D_{3}} \\
&=&\frac{4-3p\left( p^{2}+4q\right) z-4q\left( p^{4}+6p^{2}q+8q^{2}\right)
z^{2}+8pq^{3}\left( p^{2}+4q\right) z^{3}}{D_{3}} \\
&=&\frac{4-3p\left( p^{2}+4q\right) z-4q\left( p^{4}+6p^{2}q+8q^{2}\right)
z^{2}+8pq^{3}\left( p^{2}+4q\right) z^{3}}{1-p\left( p^{2}+4q\right)
z-2q\left( p^{4}+6p^{2}q+8q^{2}\right) z^{2}+8pq^{3}\left( p^{2}+4q\right)
z^{3}+64q^{6}z^{4}}.
\end{eqnarray*}

So, the proof is completed.
\end{proof}

\begin{theorem}
Let $n$\ be a natural number. Then we have the new generating function of
the product of $\left( p,q\right) $-Jacobsthal Lucas numbers $\left(
j_{p,q,n}j_{p,q,2n+1}\right) $:%
\begin{equation}
\dsum\limits_{n=0}^{\infty }j_{p,q,n}j_{p,q,2n+1}z^{n}=\frac{2p-p^{2}\left(
p^{2}+2q\right) z+12pq^{2}\left( p^{2}+4q\right) z^{2}-16p^{2}q^{4}z^{3}}{%
1-p\left( p^{2}+4q\right) z-2q\left( p^{4}+6p^{2}q+8q^{2}\right)
z^{2}+8pq^{3}\left( p^{2}+4q\right) z^{3}+64q^{6}z^{4}}.  \tag{4.36}
\end{equation}
\end{theorem}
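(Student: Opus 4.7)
The plan is to mirror exactly the strategy used in the preceding Theorems 8 and 11, which handle the analogous odd-index products $L_{p,q,n}L_{p,q,2n+1}$ and (for even index) $j_{p,q,n}j_{p,q,2n}$. Specifically, I would start from the symmetric-function representation $j_{p,q,n}=2S_{n}(e_{1}+[-e_{2}])-pS_{n-1}(e_{1}+[-e_{2}])$ given in the proposition of \S1, apply it to the first factor (with the alphabet $A=\{a_{1},-a_{2}\}$ under the Case~3 conditions $a_{1}-a_{2}=p$, $a_{1}a_{2}=2q$) and again to the second factor at index $2n+1$ (with $E=\{e_{1},-e_{2}\}$ satisfying the same conditions). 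Multiplying out the two binomials yields
\begin{align*}
j_{p,q,n}\,j_{p,q,2n+1} &= 4\,S_{n}(a_{1}+[-a_{2}])S_{2n+1}(e_{1}+[-e_{2}]) - 2p\,S_{n}(a_{1}+[-a_{2}])S_{2n}(e_{1}+[-e_{2}]) \\
&\quad - 2p\,S_{n-1}(a_{1}+[-a_{2}])S_{2n+1}(e_{1}+[-e_{2}]) + p^{2}\,S_{n-1}(a_{1}+[-a_{2}])S_{2n}(e_{1}+[-e_{2}]).
\end{align*}

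The second step is to multiply by $z^{n}$, sum over $n\geq 0$, and invoke the four generating functions already computed in Case~3 of Section~4: namely equations (4.29), (4.28), (4.32), (4.31), which respectively supply the series for $\sum S_{n}(A)S_{2n+1}(E)z^{n}$, $\sum S_{n}(A)S_{2n}(E)z^{n}$, $\sum S_{n-1}(A)S_{2n+1}(E)z^{n}$, and $\sum S_{n-1}(A)S_{2n}(E)z^{n}$. All four have the common denominator $D_{3}=1-p(p^{2}+4q)z-2q(p^{4}+6p^{2}q+8q^{2})z^{2}+8pq^{3}(p^{2}+4q)z^{3}+64q^{6}z^{4}$, so the sum is immediately a single rational function with denominator $D_{3}$.

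The third and final step is the arithmetic simplification of the numerator. Collecting coefficients by powers of $z$: the constant term is $4p-2p=2p$; the coefficient of $z$ is $4p^{2}q-2p^{2}(p^{2}+4q)+p^{2}(p^{2}+2q)=-p^{2}(p^{2}+2q)$; the coefficient of $z^{2}$ is $32pq^{3}+8pq^{2}(p^{2}+2q)+8p^{3}q^{2}-4p^{3}q^{2}=12pq^{2}(p^{2}+4q)$; and the coefficient of $z^{3}$ is $-16p^{2}q^{4}$. This reproduces precisely the numerator asserted in (4.36).

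There is no real obstacle; the proof is a direct transcription of the template established by Theorems 8 and 11, and the only work is bookkeeping. The mildly tedious part is verifying that the four contributions from (4.28), (4.29), (4.31), (4.32) collapse to the claimed three-term numerator, which amounts to the coefficient-by-coefficient computation just outlined and is guaranteed by the parallelism with the $L_{p,q,n}L_{p,q,2n+1}$ case (where $2q$ in place of $q$ in the $e_{1}e_{2}$ relation is the only alteration). Once those coefficients are checked, the proof concludes in the same manner as Theorem~11.
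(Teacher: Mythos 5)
Your proposal is correct and follows exactly the same route as the paper: expand $j_{p,q,n}j_{p,q,2n+1}$ via the representation $j_{p,q,n}=2S_{n}-pS_{n-1}$, sum against $z^{n}$ using (4.28), (4.29), (4.31), (4.32) over the common denominator $D_{3}$, and collect coefficients. Your numerator arithmetic checks out (indeed, your $z^{2}$ coefficient $12pq^{2}(p^{2}+4q)$ is what the paper's final line also reports, after a transient typo in its intermediate step).
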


\begin{proof}
We have%
\begin{eqnarray*}
\dsum\limits_{n=0}^{\infty }j_{p,q,n}j_{p,q,2n+1}z^{n}
&=&\dsum\limits_{n=0}^{\infty }\left( 
\begin{array}{c}
\left( 2S_{n}(a_{1}+[-a_{2}])-pS_{n-1}(a_{1}+[-a_{2}])\right) \\ 
\times \left( 2S_{2n+1}(e_{1}+[-e_{2}])-pS_{2n}(e_{1}+[-e_{2}])\right)%
\end{array}%
\right) z^{n} \\
&=&4\dsum\limits_{n=0}^{\infty
}S_{n}(a_{1}+[-a_{2}])S_{2n+1}(e_{1}+[-e_{2}])z^{n} \\
&&-2p\dsum\limits_{n=0}^{\infty
}S_{n}(a_{1}+[-a_{2}])S_{2n}(e_{1}+[-e_{2}])z^{n} \\
&&-2p\dsum\limits_{n=0}^{\infty
}S_{n-1}(a_{1}+[-a_{2}])S_{2n+1}(e_{1}+[-e_{2}])z^{n} \\
&&+p^{2}\dsum\limits_{n=0}^{\infty
}S_{n-1}(a_{1}+[-a_{2}])S_{2n}(e_{1}+[-e_{2}])z^{n}.
\end{eqnarray*}

Using the relationships $\left( 4.28\right) ,$ $\left( 4.29\right) ,$ $%
\left( 4.31\right) $ and $\left( 4.32\right) $, we obtain%
\begin{eqnarray*}
\dsum\limits_{n=0}^{\infty }j_{p,q,n}j_{p,q,2n+1}z^{n} &=&\frac{4\left(
p+8pq^{3}z^{2}\right) }{D_{3}}-\frac{2p\left( 1-2pqz-4q^{2}\left(
p^{2}+2q\right) z^{2}\right) }{D_{3}} \\
&&-\frac{2p\left( p\left( p^{2}+4q\right) z-4p^{2}q^{2}z^{2}\right) }{D_{3}}+%
\frac{p^{2}\left( \left( p^{2}+2q\right) z-4pq^{2}z^{2}-16q^{4}z^{3}\right) 
}{D_{3}} \\
&=&\frac{2p-p^{2}\left( p^{2}+2q\right) z+4pq^{2}\left( p^{2}+4q\right)
z^{2}-16p^{2}q^{4}z^{3}}{D_{3}} \\
&=&\frac{2p-p^{2}\left( p^{2}+2q\right) z+12pq^{2}\left( p^{2}+4q\right)
z^{2}-16p^{2}q^{4}z^{3}}{1-p\left( p^{2}+4q\right) z-2q\left(
p^{4}+6p^{2}q+8q^{2}\right) z^{2}+8pq^{3}\left( p^{2}+4q\right)
z^{3}+64q^{6}z^{4}}.
\end{eqnarray*}

So, the proof is completed.
\end{proof}

\begin{corollary}
Setting $p=k$ and $q=1$ in Eqs. (4.33)-(4.36) gives the following new
generating functions:%
\begin{eqnarray*}
\dsum\limits_{n=0}^{\infty }J_{k,n}J_{k,2n}z^{n} &=&\frac{kz+8kz^{3}}{%
1-k\left( k^{2}+4\right) z-2\left( k^{4}+6k^{2}+8\right) z^{2}+8k\left(
k^{2}+4\right) z^{3}+64z^{4}}. \\
\dsum\limits_{n=0}^{\infty }J_{k,n}J_{k,2n+1}z^{n} &=&\frac{\left(
k^{2}+2\right) z-4kz^{2}-16z^{3}}{1-k\left( k^{2}+4\right) z-2\left(
k^{4}+6k^{2}+8\right) z^{2}+8k\left( k^{2}+4\right) z^{3}+64z^{4}}. \\
\dsum\limits_{n=0}^{\infty }j_{k,n}j_{k,2n}z^{n} &=&\frac{4-3k\left(
k^{2}+4\right) z-4\left( k^{4}+6k^{2}+8\right) z^{2}+8k\left( k^{2}+4\right)
z^{3}}{1-k\left( k^{2}+4\right) z-2\left( k^{4}+6k^{2}+8\right)
z^{2}+8k\left( k^{2}+4\right) z^{3}+64z^{4}}. \\
\dsum\limits_{n=0}^{\infty }j_{k,n}j_{k,2n+1}z^{n} &=&\frac{2k-k^{2}\left(
k^{2}+2\right) z+12k\left( k^{2}+4\right) z^{2}-16k^{2}z^{3}}{1-k\left(
k^{2}+4\right) z-2\left( k^{4}+6k^{2}+8\right) z^{2}+8k\left( k^{2}+4\right)
z^{3}+64z^{4}}.
\end{eqnarray*}
\end{corollary}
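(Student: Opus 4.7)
The plan is to obtain Corollary 9 by a direct specialization of the four generating-function identities (4.33)--(4.36) that were proved in Corollaries 7, 8 and Theorems 10, 11 for general parameters $p$ and $q$. Since those identities already express the generating functions of $J_{p,q,n}J_{p,q,2n}$, $J_{p,q,n}J_{p,q,2n+1}$, $j_{p,q,n}j_{p,q,2n}$, and $j_{p,q,n}j_{p,q,2n+1}$ as rational functions of $p$, $q$, and $z$, no new generating-function machinery is needed; the content of Corollary 9 is that, under the specialization
\[
p=k,\qquad q=1,
\]
the recurrences (1.6) and (1.9) reduce to those defining the $k$-Jacobsthal numbers $J_{k,n}$ and the $k$-Jacobsthal Lucas numbers $j_{k,n}$ listed in Table 1, so that $J_{p,q,n}\mapsto J_{k,n}$ and $j_{p,q,n}\mapsto j_{k,n}$.

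First I would record this identification explicitly, noting that with $p=k$ and $q=1$ the four generating functions on the left-hand sides of (4.33)--(4.36) become exactly the four generating functions claimed in Corollary 9. Then, for each of the four identities, I would substitute $p=k$ and $q=1$ into both numerator and denominator of the rational function on the right-hand side. For the common denominator
\[
D_3=1-p(p^2+4q)z-2q(p^4+6p^2q+8q^2)z^2+8pq^3(p^2+4q)z^3+64q^6z^4,
\]
the specialization gives
\[
D_3\big|_{p=k,\,q=1}=1-k(k^2+4)z-2(k^4+6k^2+8)z^2+8k(k^2+4)z^3+64z^4,
\]
which is precisely the denominator appearing in all four formulas of Corollary 9. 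The four numerators are handled similarly by plain substitution: $pz+8pq^3z^3$ becomes $kz+8kz^3$; $(p^2+2q)z-4pq^2z^2-16q^4z^3$ becomes $(k^2+2)z-4kz^2-16z^3$; $4-3p(p^2+4q)z-4q(p^4+6p^2q+8q^2)z^2+8pq^3(p^2+4q)z^3$ becomes $4-3k(k^2+4)z-4(k^4+6k^2+8)z^2+8k(k^2+4)z^3$; and $2p-p^2(p^2+2q)z+12pq^2(p^2+4q)z^2-16p^2q^4z^3$ becomes $2k-k^2(k^2+2)z+12k(k^2+4)z^2-16k^2z^3$.

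There is no real obstacle here: the only thing to be careful about is bookkeeping the coefficients of $q$ correctly when every power of $q$ collapses to $1$, and not confusing the denominator $D_3$ of Case 3 with the denominators $D_1,D_2$ of the Fibonacci/Lucas and Pell/Pell Lucas cases, which have a structurally different shape. Once the four specialized rational functions have been written down, the four formulas asserted in Corollary 9 read off literally, completing the proof.
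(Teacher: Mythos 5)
Your proposal is correct and follows exactly the route the paper intends: Corollary 9 is stated as a direct specialization $p=k$, $q=1$ of Eqs.\ (4.33)--(4.36), with the identification $J_{p,q,n}\mapsto J_{k,n}$ and $j_{p,q,n}\mapsto j_{k,n}$ justified by comparing the recurrences (1.6), (1.9) with Table 1, and your substitutions into the numerators and the denominator $D_3$ all check out.
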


\begin{itemize}
\item Put $k=1$ in the Corollary 9, we obtain the following table:%
\begin{equation*}
\begin{tabular}{|c|c|}
\hline
$\text{Coefficient of }z^{n}$ & Generating function \\ \hline
$J_{n}J_{2n}$ & $\frac{z+8z^{3}}{1-5z-30z^{2}+40z^{3}+64z^{4}}$ \\ \hline
$J_{n}J_{2n+1}$ & $\frac{3z-4z^{2}-16z^{3}}{1-5z-30z^{2}+40z^{3}+64z^{4}}$
\\ \hline
$j_{n}j_{2n}$ & $\frac{4-15z-60z^{2}+40z^{3}}{1-5z-30z^{2}+40z^{3}+64z^{4}}$
\\ \hline
$j_{n}j_{2n+1}$ & $\frac{2-3z+60z^{2}-16z^{3}}{1-5z-30z^{2}+40z^{3}+64z^{4}}$
\\ \hline
\end{tabular}%
\end{equation*}
\end{itemize}

\begin{center}
\textbf{Table 7.} A new generating functions of the products of some numbers.
\end{center}

\section{\textbf{Conclusion}}

This study proposes to present new class of generating functions for some
special numbers with parameters $p$ and $q$ by using the symmetric
functions. We can summarize the sections as follows:

\begin{itemize}
\item \textbf{In Section 1,} we presented some backgrounds about $\left(
p,q\right) $-numbers and some preliminary facts and results on the\emph{\ }%
symmetric functions.

\item \textbf{In Section 2, }we derived and proved 5 theorems (1-5) by
making use of the symmetrizing operator given by Definition 5.

\item \textbf{In Section 3, }By making use the theorems given in Section 2,
we have derived some new generating functions of odd and even terms of $%
\left( p,q\right) $-numbers.

\item \textbf{In Section 4, }By making use the symmetric functions, we
investigated the new generating functions of the products of $\left(
p,q\right) $-numbers with odd and even $\left( p,q\right) $-numbers.
\end{itemize}

\end{document}